\documentclass[10pt]{article}   
\usepackage{amsthm, amsmath}
\usepackage{amssymb,titlefoot}   
\textwidth=6in
\hoffset=-1.3cm
\textheight=20.5cm

\theoremstyle{plain}

\newtheorem{theorem}{Theorem}[section]
\newtheorem{lemma}[theorem]{Lemma}
\newtheorem{proposition}[theorem]{Proposition}
\newtheorem{corollary}[theorem]{Corollary}

\theoremstyle{definition}

\newtheorem{definition}[theorem]{Definition}

\newtheorem{algorithm}[theorem]{Algorithm}
\newtheorem{notation}[theorem]{Notation}
\newtheorem{remark}[theorem]{Remark}
\newtheorem{parag}[theorem]{}


\DeclareMathOperator{\Sing}{Sing}

\DeclareMathOperator{\EMaxB}{E-\underline{\mathbf{Max}}}
\DeclareMathOperator{\ord}{ord}
\DeclareMathOperator{\Eord}{E-ord}
\DeclareMathOperator{\ESing}{E-Sing}
\DeclareMathOperator{\Etop}{E-top}
\DeclareMathOperator{\ECoeff}{E-Coeff}
\DeclareMathOperator{\BECoeff}{\bf{E-Coeff}}

\DeclareMathOperator{\Img}{Im}


\title{Desingularization of binomial varieties in arbitrary characteristic. \\ Part I. A new resolution function and their properties.}
\author{Roc\'{\i}o Blanco\footnote{Partially supported by MMT2007-64704.}}
\date{}

\begin{document}
\maketitle
\pagestyle{headings}

\begin{abstract}

This paper is devoted to give all the technical  constructions and definitions that will lead to the construction of an algorithm of resolution of singularities for binomial ideals.

We construct a resolution function that will provide a resolution of singularities for binomial ideals, over a field of arbitrary characteristic. For us, a binomial ideal means an ideal generated by binomial equations without any restriction, including monomials and $p$-th powers, where $p$ is the characteristic of the base field. 

This resolution function is based in a modified order function, called $E$-order. The $E$-order of a binomial ideal is the order of the ideal along a normal crossing divisor $E$.

The resolution function allows us to construct an algorithm of $E$-\emph{resolution of binomial basic objects}, that will be a subroutine of the main resolution algorithm.
\end{abstract}

\keywords{{\it Keywords}: Resolution of Singularities, Binomial ideals.} \\

\amssubj{Mathematics Subject Classification 2000: 14E15.} 

\section*{Introduction}

In this paper we consider binomial ideals without any kind of restriction, and we construct a resolution function $t$ based in a modified order function, called $E$-order.

The order function and its weighted variants are classically used as useful ingredients to construct resolution functions in the general case, over a field of characteristic zero. See for example Encinas-Villamayor \cite{course} or Encinas-Hauser \cite{strong}.
\medskip

Bierstone and Milman use in \cite{lemabm} the Hilbert Samuel function as resolution function. They construct an algorithm of resolution of singularities, free of characteristic, for reduced binomial ideals with no nilpotent elements. In particular, their algorithm applies to toric ideals.
\medskip

The new resolution function $t$ given in this paper provides combinatorial centers of blowing-up. This type of centers preserve the binomial structure of the ideal after blowing-up, what let us ensure the existence of a hypersurface of maximal contact which to make induction on the dimension of the ambient space at every step of the resolution process.
\medskip

This paper is followed by {\it Desingularization of binomial varieties in arbitrary characteristic. \\ Part II. Combinatorial desingularization algorithm} where we construct an algorithm of resolution of singularities based in this resolution function. The construction of the algorithm, that is the aim of this two articles, needs some technical tools of resolution of singularities. For an easier reading, we present these results in two parts.  This first part is the most technical one and it is not essential to obtain an idea of the running of the algorithm, which is given in Part II. 
\medskip

In section \ref{notdef} we define the $E$-order function, and we prove its main properties in section \ref{propE}.  The resolution function $t$ is defined in section \ref{resfun}. A subroutine of the main algorithm constructed in Part II is given in section \ref{algresbin}.
\medskip

I thank Santiago Encinas for numerous useful suggestions to improve the presentation of this paper. I am also grateful to Antonio Campillo for his help during all this time.  
  
\section{Notation and first definitions} \label{notdef}

Thereafter, $K$ denotes an algebraically closed field (in particular, we will use $K$ is perfect) of arbitrary characteristic, $\textbf{W}$ will be the regular ambient space. 

\begin{remark}
 At any stage of the resolution process, $\textbf{W}=\cup_i U_i$, where $U_i\cong\mathbb{A}^n_K$. In what follows we work locally, inside any affine chart $U_i$, we consider an open set $W$.
\end{remark}

Let $E=\{V_1,\ldots,V_{r}\}$ be a simple normal crossing divisor in $W$. $E$ defines a stratification of $W$ in the following way: we consider the regular closed sets $$E_{\Lambda}=\bigcap_{\lambda\in \Lambda}V_{\lambda} \text{ where } \Lambda\subseteq \{1,\ldots,r\},$$ 
by definition $E_{\emptyset}:=W$, then each $E_{\Lambda}^{0}=E_{\Lambda}\setminus\left((\cup_{j\not\in \Lambda} V_j)\cap E_{\Lambda}\right)$ is locally closed, regular and $$W=\bigsqcup_{\Lambda}E_{\Lambda}^{0}.$$ Therefore, for every $\xi\in W$ there exists a unique $\Lambda(\xi)\subseteq \{1,\ldots,r\}$ such that $\xi\in E_{\Lambda(\xi)}^{0}$.

\begin{remark} At the beginning of the resolution process $W=Spec(K[x_1,\ldots,x_n])$, $dim(W)=n$. Fix the normal crossing divisor $E=\{V_1,\ldots,V_{n}\}$, where $V_i=V(x_i)$ for each $1\leq i \leq n$, to define a stratification of $W$.
\end{remark}

\begin{parag}
Let $J\subset K[x]=K[x_1,\ldots,x_n]$ be a binomial ideal (generated by monomial and binomial equations). Fixed a monomial ordering in $K[x]$, compute a Gr${\rm \ddot{o}}$bner basis of $J$. It is known that the reduced Gr${\rm \ddot{o}}$bner basis of a binomial ideal is binomial, see \cite{sturm}. The analogous result for standard basis is tested for reduced binomial ideals with no nilpotents in \cite{lemabm}. 

Therefore, we start with a binomial ideal $J=<f_1(x),\ldots,f_m(x)>\subset K[x]$ such that the set of generators $\{f_1(x),\ldots,f_m(x)\}$ is the reduced Gr${\rm \ddot{o}}$bner basis of $J$.

After a blowing up $W'\rightarrow W$, binomial equations of the type $$1-\mu x^{\delta}, \text{ with } \mu\in K, \delta\in\mathbb{N}^n$$ appear naturally in the transform ideal of $J$. 
The points $\xi'\in W'$ outside the exceptional divisor where $1-\mu x^{\delta}$ vanishes, satisfy $x^{\delta}(\xi')\neq 0$.  We denote as $y_{i}$ each variable $x_{i}$ that do not vanish anywhere over $V(J)\cap V(1-\mu x^{\delta})$.  And, in what follows, we work in  localized rings of the type $K[x,y]_y$.
\end{parag}

\begin{remark} \label{cartas} At any stage of the resolution process, $\textbf{W}=\cup_i U_i$, where $U_i\cong\mathbb{A}^n_K$, is the regular ambient space over $K$. Inside any chart $U_i$ we consider the open set $$W=Spec(K[x,y]_y)=Spec(K[x_1,\ldots,x_s,y_1,\ldots,y_{n-s}]_y)\subset \mathbb{A}^n_K.$$ 
\end{remark}

\begin{definition} \label{jota}
Let $J\subset K[x,y]_y$ be an ideal. We will say $J$ is a \emph{binomial ideal} if it is generated by binomial equations of the type
\begin{equation} J\!=<\!f_1(x,y),\ldots,f_m(x,y)\!> \text{ with } f_i(x,y)\!=\!x^{\lambda_i}(1-\mu_i y^{\delta_i}) \text{ or } f_j(x,y)\!=\!x^{\nu_j}(y^{\gamma_j}x^{\alpha_j}-b_jx^{\beta_j}),  \end{equation} 
with $\alpha_j,\beta_j\in {\mathbb{N}}^n$, $\delta_i,\gamma_j\in {\mathbb{Z}}^n$, $\lambda_i, \nu_j\in {\mathbb{N}}^n$ and $\mu_i,b_j\in K$ for every $1\leq i,j \leq m$.
And where, for each $j$, every equation of the type $y^{\gamma_j}x^{\alpha_j}-b_j x^{\beta_j}$ has no common factors.\\

Denote $|\alpha_j|=\sum_{k=1}^s \alpha_{j,k}$ and $\ |\beta_j|=\sum_{k=1}^s \beta_{j,k}$.
Assume $0<|\alpha_j|\leq|\beta_j|$. 
\end{definition}

\begin{remark}
Note that the variables $y_i$, $1\leq i \leq n-s$, with $\delta_i\neq 0$ are invertible in the local ring $\mathcal{O}_{W,\xi}$, where $$\xi\in V(1-\mu y^{\delta})\subset \bigcap_{\{i|\ \delta_i\neq 0\}}\{y_i\neq 0\}.$$ 
\end{remark}

\begin{definition} We say $\{\alpha, \beta$, $\gamma\}\in {\mathbb{Z}}^n$, are \emph{disjoint} when they satisfy $$\alpha_l\neq 0 \Rightarrow \beta_l=\gamma_l=0,\  \beta_m\neq 0 \Rightarrow \alpha_m=\gamma_m=0,\   \text{ and } \gamma_t\neq 0 \Rightarrow \alpha_t=\beta_t=0$$ for every $1\leq l,m,t \leq n$.
\end{definition}

That is, in the definition \ref{jota}, $\{\alpha_j, \beta_j,\gamma_j\}$ are disjoint. And,  $\{\alpha_j,\beta_j,\delta_i\}$ are also disjoint. 

\begin{definition} Let $J\subset\mathcal{O}_W$ be a binomial ideal as in \ref{jota}. The binomial equations of $J$ of the form $1-\mu y^{\delta}$, with $\mu \in K$, $\delta \in \mathbb{Z}^n$, are said to be \emph{hyperbolic equations} of $J$.  
\end{definition}

\begin{notation} A unique non hyperbolic binomial equation without common factors, will be denoted 
\begin{equation}
f(x,y)=y^{\gamma}x^{\alpha}-bx^{\beta} \text{ where } \alpha,\beta\in\mathbb{N}^n, \gamma\in\mathbb{Z}^n, b\in K \text{ and } 0<|\alpha|\leq |\beta| \label{efegamma} \end{equation} $\alpha=(\alpha_1,\ldots,\alpha_k,0,\ldots,0)$, $\beta=(0,\ldots,0,\beta_{k+1},\ldots,\beta_{k+(s-k)},0,\ldots,0)$ and $\gamma=(0,\ldots,0,\gamma_{1},\ldots,\gamma_{n-s})$ for some $1\leq k\leq s$.  
\end{notation}

We are going to define a modified order function, the $E$-order, as an order along a normal crossing divisor $E$. This definition works for any ideal, although we will apply it only to binomial ideals.  

\begin{definition} \label{Eord} Let $W=Spec(K[x,y]_y)$ be the regular ambient space. Let $J\subset \mathcal{O}_{W}$ be a binomial ideal as in \ref{jota}. Let $E=\{V_1,\ldots,V_n\}$ be a normal crossing divisor in $\mathbb{A}^n_K$. Let $\xi\in W$ be a closed point and let $\Lambda(\xi)$ be a subset of $\{1,\ldots,n\}$ such that $\xi\in E_{\Lambda(\xi)}^{0}$. We call \emph{$E$-order} of $J_{\xi}$ in $\mathcal{O}_{W,\xi}$ to the order of the ideal with respect to the $I(E_{\Lambda(\xi)}^{0})_{\xi}$-adic topology $$\Eord_{\mathcal{O}_{W,\xi}}(J_{\xi})=\max\left\{m \in \mathbb{N} /\ J_{\xi}\subset (I(E_{\Lambda(\xi)}^{0})_{\xi})^m \right\}.$$ 
\end{definition}

\begin{definition}
Let $J\subset \mathcal{O}_{W}$ be a binomial ideal as in \ref{jota}. Let $\xi \in W$ be a point. The \emph{$E$-order} function (associated to $J$) is defined as follows, $$\begin{array}{rl} \Eord_J: & W\rightarrow \mathbb{N} \\ & \xi \rightarrow \Eord_J(\xi)=\Eord_{\xi}(J):=\Eord_{\mathcal{O}_{W,\xi}}(J_{\xi}) \end{array}$$ The $E$-order of $J$ at $\xi$ will be denoted $\Eord_{\xi}(J)$. The $E$-order of any binomial equation $f\in J$ at $\xi$, is defined as the $E$-order of the ideal $<f>$ at the point $\xi$. 
\end{definition}

\begin{remark} Note that $\Eord_J$ computes the order of the ideal $J$ along $E\cap W=\{V(x_1),\ldots,V(x_s)\}$.
\end{remark}

\begin{remark} Observe that the $E$-order of $f$ is constant along the points in $E_{\Lambda(\xi)}^{0}$. Since to compute the $E$-order are only relevant the coordinates of the point $\xi$ that are contained in some hypersurface $V_j\in E$, $j=1,\ldots,n$.
\end{remark} 

\begin{remark} Since $E_{\emptyset}^{0}=W\setminus (\cup_{i=1}^n V_i\cap W)$ then $I(E_{\emptyset}^{0})=I(W\setminus (\cup_{i=1}^n V_i\cap W))=$ $<0>$. Then for every binomial ideal $J\subset \mathcal{O}_{W}$, $J\neq <0>$, and for each point $\xi \in W\setminus (\cup_{i=1}^n V_i\cap W)$ it holds $\Eord_{\xi}(J)=0$.
\end{remark}

\begin{definition} \label{binbo} An \emph{affine binomial basic object along $E$} (BBOE) is a tuple $B=(W,(J,c),H,E)$ where 
\begin{itemize}
	\item  $W=Spec(K[x,y]_y)=Spec(K[x_1,\ldots,x_s,y_1,\ldots,y_{n-s}]_y)\subset \mathbb{A}^n_K$.
	\item $E$ is the set of normal crossing regular hypersurfaces in $\mathbb{A}^n_K$, such that  $$E=\{V(x_1),\ldots,V(x_s),V(y_1),\ldots,V(y_{n-s})\}.$$ In the open set $Spec(K[x,y]_y)$ we have $E\cap Spec(K[x,y]_y)=\{V(x_1),\ldots,V(x_s)\}$.	
	\item $J$ is a binomial ideal as in (\ref{jota}), and $c$ is a positive integer number.
	\item $H\subset E$ is a set of normal crossing regular hypersurfaces in $W$.
\end{itemize}
\end{definition}  

\begin{definition} A \emph{non affine binomial basic object along $E$} is a tuple $\textbf{B}=(\textbf{W},(\mathcal{J},c),H,E)$ which is covered by affine BBOE. Where 
\begin{itemize}
	\item $\textbf{W}$ is the regular ambient space over a field $K$ of arbitrary characteristic.
	\item $E$ is a set of normal crossing regular hypersurfaces in $\textbf{W}$.
	\item $(\mathcal{J},c)$ is a \emph{binomial pair}, that is, $\mathcal{J}\subset \mathcal{O}_{\textbf{W}}$ is a coherent sheaf of binomial ideals with respect to $E$, as defined in \ref{jota}, satisfying $\mathcal{J}_{\xi}\neq 0$ for each $\xi\in \textbf{W}$, and $c$ is a positive integer number.
	\item $H\subset E$ is a set of normal crossing regular hypersurfaces in $\textbf{W}$.
\end{itemize}
\end{definition}

\begin{remark} In most cases, we work locally with affine BBOE $(W,(J,c),H,E)$ with the notation introduced in \ref{binbo}, without specify that this BBOE is an affine BBOE.
\end{remark}

The definition of \emph{$E$-singular locus} along a normal crossing divisor $E$ is analogous to the usual definition of singular locus. 

\begin{definition} Let $J\subset \mathcal{O}_{W}$ be an ideal, $c$ a positive integer. We call \emph{$E$-singular locus} of $J$ with respect to $c$ to the set, $$\ESing(J,c)=\{\xi \in W/\ \Eord_{\xi}(J)\geq c\}.$$
\end{definition}

\begin{remark} \label{idpair}
Hironaka introduces the notion of equivalence of \emph{pairs} and using this notion, the definition of \emph{idealistic exponent} or \emph{idealistic pair} as an equivalence class of such pairs. See Hironaka \cite{Hironaka1977} for more details.  
\end{remark}

\begin{definition} \label{equiv} Let $W$ be a regular scheme. Let $\mathcal{J}_1,\mathcal{J}_2$ be coherent ideal sheafs in $W$ and let $c_1,c_2$ be positive integers. The pair $(\mathcal{J}_1,c_1)$ is said to be \emph{equivalent} to the pair $(\mathcal{J}_2,c_2)$ if for every morphism $h:Spec(A)\rightarrow W$ where $(A,\nu)$ is a valuation ring, 
$$\frac{\nu(J_1A)}{c_1}=\frac{\nu(J_2A)}{c_2}$$ where $J_iA$ is the ideal in $A$ generated by $J_i$ by means of $h$.
\end{definition}

Therefore, the $E$-order is well defined in the class of idealistic exponents, with the previous definition of equivalence of pairs. 

\begin{remark}\label{equipair}
We always consider pairs $(J,c)$ or binomial basic objects $(W,(J,c),H,E)$ along $E$. This is because of the following result for the order function.
\begin{itemize}
	\item[]  {\small If the tuples $B=(W,(J,c),E)$ and $B_0=(W,(J_0,c_0),E)$ define the same basic object, then for every point $\xi\in \Sing(J,c)=\Sing(J_0,c_0)$, $$\frac{\ord_{\xi}(J)}{c}=\frac{\ord_{\xi}(J_0)}{c_0}.$$ See \cite{Hironaka1977} or \cite{course} for details.}
\end{itemize}
As a consequence, for every point $\xi\in \ESing(J,c)$, the quotient $\frac{\Eord_{\xi}(J)}{c}$ can be defined in terms of the binomial basic object along $E$, modulo the equivalence relation between idealistic exponents. 
\end{remark}
 
\begin{definition} Let $(W,(J,c),H,E)$ be a BBOE, where $W=Spec(K[x_1,\ldots,x_s,y_1,\ldots,y_{n-s}]_y)$ and $E=\{V(x_1),\ldots,V(x_s),V(y_1),\ldots,$ $V(y_{n-s})\}=\{V_1,\ldots,V_n\}$. Let $H=\{H_1,\ldots,H_r\}$ be a normal crossing divisor, $H_i=V(x_j)$ with $1\leq j\leq s$ for each $i$. 

We define a \emph{transformation} of the binomial basic object $$(W,(J,c),H,E)\leftarrow (W',(J',c),H',E')$$ by means of the blowing up $W\stackrel{\pi}{\leftarrow} W'$, along a combinatorial center $Z\subset \ESing(J,c)$, where $W'$ is the strict transform of $W$. With
\begin{itemize}
	\item $H'=\{H_1^{\curlyvee},\ldots,H_r^{\curlyvee},Y'\}$ where $H_i^{\curlyvee}$ is the strict transform of $H_i$ and $Y'$ is the exceptional divisor in $W'$.
	\item $E'=\{V_1^{\curlyvee},\ldots,V_n^{\curlyvee},Y'\}$ where $V_i^{\curlyvee}$ is the strict transform of $V_i$ and $Y'$ is the exceptional divisor in $W'$.
	\item $J'=J^{!}=I(Y')^{\theta-c}\cdot J^{\curlyvee}$ is the \emph{controlled} transform of $J$, where $\theta=\max\ \Eord(J)$ and $J^{\curlyvee}$ is the weak transform of $J$.
\end{itemize}
\end{definition}

\begin{remark} A combinatorial center is given by the intersection of coordinate hypersurfaces defined by variables $x_i$. 
\end{remark}

\begin{definition} A sequence of transformations of binomial basic objects along $E$ ($E^{(0)}=E$)  {\small \begin{equation}   (W^{(0)}\!,(J^{(0)},c),H^{(0)}\!,E^{(0)})\!\leftarrow\!(W^{(1)}\!,(J^{(1)},c),H^{(1)}\!,E^{(1)})\!\leftarrow\!\cdots \leftarrow\!(W^{(N)}\!,(J^{(N)},c),H^{(N)}\!,E^{(N)}) \label{Eresolution} \end{equation}} is a \emph{$E$-resolution} of $(W^{(0)},(J^{(0)},c),H^{(0)},E^{(0)})$ if $\ESing(J^{(N)},c)=\emptyset$. 
\end{definition}

\section{Properties of the $E$-order function} \label{propE}

\begin{definition} \label{semsup} A function $g:X \rightarrow (A,\leq)$, where $X$ is a topological noetherian space and $A$ is a totally ordered set, is said to be \emph{upper semi-continuous} if 
\begin{itemize}
	\item[-] $\Img(g)=\{a_1,\ldots,a_s\}$ is a finite subset of $A$. 
	\item[-] The sets $\mathcal{F}_{a_i}=\{\xi \in X/\ g(\xi)\geq a_i\}$ are closed for all $1\leq i \leq s$.
\end{itemize}
\end{definition}

\begin{remark} It is very well known that the order function is upper semi-continuous. See for example \cite{course}. 
\end{remark}

\begin{proposition} \label{Eorden} Properties of the $E$-order function for binomial equations.
\begin{enumerate}
 	\item[1)] Let $f(x,y)=y^{\gamma}x^{\alpha}-bx^{\beta}$ be a binomial equation without common factors, $\alpha,\beta\in\mathbb{N}^n$, $\gamma\in\mathbb{Z}^n$, $b\in K$ and $0<|\alpha|\leq |\beta|$. Let $\xi\in W$ be a point where $y^{\gamma}(\xi)\neq 0$, then $\Eord_{\xi}(f)\leq \ord_{\xi}(f)$.   
  \item[2)] Let $f(y)=1-\mu y^{\delta}$ be a hyperbolic equation, $\delta\in\mathbb{Z}^n$, $\mu\in K$. Let $\xi \in W$ be a point where $y^{\delta}(\xi)\neq 0$, then $\Eord_{\xi}(f)=0\leq \ord_{\xi}(f)$.  
\end{enumerate}
\end{proposition}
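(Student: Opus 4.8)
The plan is to reduce both statements to a single inclusion of ideals in the local ring $\mathcal{O}_{W,\xi}$. Write $\mathfrak{m}_\xi$ for its maximal ideal and $I_\xi = I(E_{\Lambda(\xi)}^0)_\xi$ for the ideal whose powers define the $E$-order. Since $\xi\in E_{\Lambda(\xi)}^0$, every regular function vanishing along the stratum $E_{\Lambda(\xi)}^0$ vanishes at $\xi$, so $I_\xi\subseteq\mathfrak{m}_\xi$ and hence $I_\xi^{\,m}\subseteq\mathfrak{m}_\xi^{\,m}$ for every $m\in\mathbb{N}$. Consequently $f\in I_\xi^{\,m}$ forces $f\in\mathfrak{m}_\xi^{\,m}$, and comparing the largest exponents for which each membership holds gives $\Eord_\xi(f)\le\ord_\xi(f)$. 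This one observation already yields the inequality in both parts; what remains is to identify $I_\xi$ explicitly and, in part 2, to pin down the exact value $\Eord_\xi(f)=0$.

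For part 1, I would first record the local shape of the stratum ideal. Because $E\cap W=\{V(x_1),\dots,V(x_s)\}$ and $\xi\in E_{\Lambda(\xi)}^0$, the index set $\Lambda(\xi)$ restricted to the $x$-variables is exactly $A:=\{\,i: 1\le i\le s,\ x_i(\xi)=0\,\}$, and $I_\xi=(x_i: i\in A)\,\mathcal{O}_{W,\xi}$. The hypothesis $y^{\gamma}(\xi)\neq 0$ guarantees that $y^{\gamma}$ is a unit in $\mathcal{O}_{W,\xi}$ (indeed the $y_j$ are invertible on $W$), so the factor $y^{\gamma}$ plays no role in the $E$-order, which is therefore controlled by $x^{\alpha}$ and $x^{\beta}$ through the exponents supported on $A$. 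One can even sharpen the bound: since $\alpha$ and $\beta$ have disjoint supports, their initial forms cannot cancel, giving $\Eord_\xi(f)=\min\{\sum_{i\in A}\alpha_i,\ \sum_{i\in A}\beta_i\}$; but for the stated inequality the inclusion $I_\xi\subseteq\mathfrak{m}_\xi$ suffices and no such computation is strictly necessary.

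For part 2 the extra point is that $f=1-\mu y^{\delta}$ contains no $x$-variable, so that, under $y^{\delta}(\xi)\neq 0$, its class in the quotient $\mathcal{O}_{W,\xi}/I_\xi$ — the local ring of the stratum — remains the nonzero element $1-\mu y^{\delta}$; hence $f\notin I_\xi=I_\xi^{\,1}$ and $\Eord_\xi(f)=\max\{m: f\in I_\xi^{\,m}\}=0$. Together with $\ord_\xi(f)\ge 0$ this gives $\Eord_\xi(f)=0\le\ord_\xi(f)$. The only delicate step I anticipate is the rigorous justification that a pure-$y$ element such as $1-\mu y^{\delta}$ cannot lie in the $x$-generated ideal $I_\xi$: this is where one uses that $\{x_i: i\in A\}$ extends to a regular system of parameters, so that the image of $1-\mu y^{\delta}$ in the regular quotient $\mathcal{O}_{W,\xi}/I_\xi$ is nonzero (equivalently, its differential at $\xi$ has no $dx_i$-component). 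Everything else is the formal comparison of $I_\xi$-adic and $\mathfrak{m}_\xi$-adic orders described above.
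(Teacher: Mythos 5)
Your proof is correct, and it takes a genuinely different route from the paper's. The paper first identifies $\Eord_{\xi}(f)$ with the order of $f$ at the generic point of the stratum $E^{0}_{\Lambda(\xi)}$ and then invokes upper semi-continuity of the order function: $\xi$ lies in the closure of that generic point, so the order at $\xi$ can only be larger. You replace both steps by the single ideal-theoretic observation $I(E^{0}_{\Lambda(\xi)})_{\xi}\subseteq\mathfrak{m}_{\xi}$, hence $I_{\xi}^{m}\subseteq\mathfrak{m}_{\xi}^{m}$ for all $m$, which compares the two adic orders directly from the definitions. What this buys: no appeal to a semicontinuity theorem, and no implicit identification of the $I_{\xi}$-adic order at $\xi$ with the order at the generic point of the stratum (an identification the paper uses silently, and which rests on ordinary and symbolic powers of the regular prime $I(E^{0}_{\Lambda(\xi)})$ coinciding). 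What the paper's route buys: the equality $\Eord_{\xi}(f)=\ord_{E^{0}_{\Lambda(\xi)}}(f)$ is itself a useful fact (it is what makes the $E$-order constant along the strata, as the paper remarks after the definition), so the paper gets it for free. You also prove the claim $\Eord_{\xi}(1-\mu y^{\delta})=0$ of part 2 explicitly, which the paper leaves implicit; your main justification (the image of $1-\mu y^{\delta}$ in $\mathcal{O}_{W,\xi}/I_{\xi}$, a localized polynomial ring in the remaining variables and in particular a domain, is the nonzero element $1-\mu y^{\delta}$) is valid in every characteristic. One caveat: the parenthetical reformulation via differentials is not equivalent to this and is unsafe in the present setting. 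In characteristic $p$, which the paper explicitly allows, $d(y^{\delta})=0$ whenever $\delta\in p\mathbb{Z}^{n}$, so the differential of $1-\mu y^{\delta}$ at $\xi$ can vanish identically; moreover ``no $dx_{i}$-component'' by itself never rules out membership in $I_{\xi}$ (e.g. $x_{1}^{2}$ has zero differential at $\xi$ yet lies in $I_{\xi}$). Keep the quotient-ring argument as the actual proof and drop the differential remark.
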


\begin{proof} 
It is enough to observe that $$\Eord_{\xi}(f)=\ord_{E_{\Lambda(\xi)}^{0}}(f)\leq ord_{\xi}(f)$$ by the upper semi-continuity of the order function. Where $\ord_{E_{\Lambda(\xi)}^{0}}(f)$ means the order of $f$ at the generic point of the stratum $E_{\Lambda(\xi)}^{0}$.

Note that in case (1), if $\xi_i=0$ for any $1\leq i\leq n$ with $\alpha_i>0$ or $\beta_i>0$, then the equality holds $\Eord_{\xi}(f)=\ord_{\xi}(f)$.
\end{proof}

\begin{remark} The above proposition still holds for binomial equations with common factors. Since $$\Eord_{\xi}(x^{\nu}(y^{\gamma}x^{\alpha}-bx^{\beta}))=\Eord_{\xi}(x^{\nu})+\Eord_{\xi}
(y^{\gamma}x^{\alpha}-bx^{\beta})$$ and $$\Eord_{\xi}(x^{\lambda}(1-\mu y^{\delta}))=\Eord_{\xi}(x^{\lambda})+\Eord_{\xi}
(1-\mu y^{\delta})=\Eord_{\xi}(x^{\lambda})$$ with $\nu,\lambda\in \mathbb{N}^n$. Note that $\Eord_{\xi}(x^{\nu})=\ord_{\xi}(x^{\nu})$ for any monomial $x^{\nu}$, with $\nu\in \mathbb{N}^n$, $\forall\ \xi\in W$.
\end{remark}

\begin{corollary} Let $J\subset \mathcal{O}_{W}$ be a binomial ideal as in definition \ref{jota}. Then $$\Eord_{\xi}(J)\leq \ord_{\xi}(J)$$ for every $\xi\in W$. 
\end{corollary}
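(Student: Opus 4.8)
The plan is to reduce the statement about the whole ideal $J$ to the statement about its individual binomial generators, which is exactly the content of Proposition~\ref{Eorden} together with the remark that extends it to equations with common factors.

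First I would record the elementary fact that, for any ideal $\mathfrak{a}$ in a local ring and any ideal $I=<g_1,\ldots,g_m>$, one has $I\subset \mathfrak{a}^m$ if and only if every generator $g_i$ lies in $\mathfrak{a}^m$; this is immediate because each power $\mathfrak{a}^m$ is itself an ideal, so no regularity hypothesis on $\mathfrak{a}$ is needed. Applying this with $\mathfrak{a}=I(E_{\Lambda(\xi)}^{0})_{\xi}$ and then with $\mathfrak{a}$ the maximal ideal of $\mathcal{O}_{W,\xi}$ gives the two identities
$$\Eord_{\xi}(J)=\min_{1\leq i\leq m}\Eord_{\xi}(f_i),\qquad \ord_{\xi}(J)=\min_{1\leq i\leq m}\ord_{\xi}(f_i),$$
valid for the generators $f_1,\ldots,f_m$ of $J$ described in Definition~\ref{jota}.

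Next I would observe that we are working in the localized ring $\mathcal{O}_W=K[x,y]_y$, so every point $\xi\in W$ has all of its $y$-coordinates nonzero; in particular $y^{\gamma}(\xi)\neq 0$ and $y^{\delta}(\xi)\neq 0$ hold automatically. Hence the hypotheses of both parts of Proposition~\ref{Eorden} are met at every $\xi\in W$, and the proposition (complemented by the remark treating the factors $x^{\nu}$ and $x^{\lambda}$) yields $\Eord_{\xi}(f_i)\leq \ord_{\xi}(f_i)$ for each generator $f_i$, whether it is of hyperbolic type or not.

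Finally I would combine these facts. Choosing an index $i_0$ that realizes the minimum defining $\ord_{\xi}(J)$, and using that the minimum over all generators is at most the value at $i_0$, we obtain
$$\Eord_{\xi}(J)=\min_{i}\Eord_{\xi}(f_i)\leq \Eord_{\xi}(f_{i_0})\leq \ord_{\xi}(f_{i_0})=\ord_{\xi}(J),$$
which is the claimed inequality. I do not expect any serious obstacle: the only point requiring a little care is the passage from generators to the ideal, i.e. checking that the minimum-over-generators formula is legitimate for \emph{both} order functions simultaneously, but as noted this rests solely on the observation that the powers $\mathfrak{a}^m$ are ideals, so it applies verbatim to the $I(E_{\Lambda(\xi)}^{0})_{\xi}$-adic order as well as to the ordinary order.
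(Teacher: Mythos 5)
Your proof is correct and follows essentially the same route as the paper: reduce to the generators via Proposition \ref{Eorden} (and the remark covering common factors), then pass from generators to the ideal. The paper's own proof is just a terser version of this; your added details --- the formula $\Eord_{\xi}(J)=\min_i \Eord_{\xi}(f_i)$, $\ord_{\xi}(J)=\min_i \ord_{\xi}(f_i)$, and the observation that the $y$-variables are invertible in $K[x,y]_y$ so the hypotheses of Proposition \ref{Eorden} hold at every point --- are exactly the steps the paper leaves implicit.
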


\begin{proof} Let $<f_1,\ldots, f_r>$ be a system of generators of $J$. By the previous proposition  $\Eord_{\xi}(f_i)\leq \ord_{\xi}(f_i)$ for each $1\leq i \leq r$. Then $\Eord_{\xi}(J)\leq \ord_{\xi}(J)$.  
\end{proof} 

\begin{proposition} \label{usc} Let $J\subset \mathcal{O}_{W}$ be a binomial ideal as in definition \ref{jota}, $\xi\in W$. Then $$\begin{array}{cl} \Eord_J: & W \rightarrow (\mathbb{Z},\leq) \\ & \   \xi \rightarrow \Eord_J(\xi):=\Eord_{\xi}(J)\end{array}$$ is an upper semi-continuous function. 
\end{proposition}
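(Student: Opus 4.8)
The plan is to reduce upper semi-continuity of $\Eord_J$ to the same statement for a single binomial equation, and then to make the relevant level sets completely explicit. First I would record two elementary reductions. Since $J_\xi$ is contained in a power of $I(E^0_{\Lambda(\xi)})_\xi$ if and only if each generator is, we get $\Eord_\xi(J)=\min_i\Eord_\xi(f_i)$ for the (reduced Gr\"obner) generators $f_i$, so that $\ESing(J,c)=\bigcap_i\{\xi:\Eord_\xi(f_i)\ge c\}$. As a minimum of finitely many functions, $\Eord_J$ is upper semi-continuous as soon as each $\xi\mapsto\Eord_\xi(f_i)$ is, because $\{\min_i g_i\ge c\}=\bigcap_i\{g_i\ge c\}$ is an intersection of closed sets. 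The finiteness of $\Img(\Eord_J)$ required by Definition \ref{semsup} is immediate from the Corollary above: $0\le\Eord_\xi(J)\le\ord_\xi(J)$ and $\ord_J$ has finite image.

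Second, I would make $\Eord_\xi(f)$ explicit in terms of the stratum through $\xi$. Put $S(\xi)=\{i\in\{1,\dots,s\}:x_i(\xi)=0\}$, so that $I(E^0_{\Lambda(\xi)})_\xi=(x_i:i\in S(\xi))=:\mathfrak p_\xi$ in $\mathcal O_{W,\xi}$, and write $|\nu_S|:=\sum_{i\in S}\nu_i$. For a generator $f=x^\nu(y^\gamma x^\alpha-bx^\beta)$, additivity of $\Eord_\xi$ on the monomial factor $x^\nu$ (the Remark following Proposition \ref{Eorden}) gives $\Eord_\xi(f)=|\nu_{S(\xi)}|+\Eord_\xi(y^\gamma x^\alpha-bx^\beta)$. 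Since the $y$-variables are invertible in $K[x,y]_y$ and every $x_j$ with $j\notin S(\xi)$ is a unit in $\mathcal O_{W,\xi}$, one factors these units off and is left with $u_1x^{\alpha_{S}}-u_2x^{\beta_{S}}$, where $u_1$ is a unit and $u_2$ is a unit exactly when $b\neq0$; hence
$$\Eord_\xi(f)=|\nu_{S(\xi)}|+\min\bigl(|\alpha_{S(\xi)}|,\,|\beta_{S(\xi)}|\bigr)\quad(b\neq0),$$
and $\Eord_\xi(f)=|\nu_{S(\xi)}|+|\alpha_{S(\xi)}|$ when $b=0$, while a monomial-times-hyperbolic generator $x^\lambda(1-\mu y^\delta)$ yields simply $|\lambda_{S(\xi)}|$. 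The key point is that the two terms never cancel so as to disturb this value: their $\mathfrak p_\xi$-degrees are $|\alpha_S|$ and $|\beta_S|$, and because $\{\alpha,\beta,\gamma\}$ are disjoint the monomials $x^{\alpha_S},x^{\beta_S}$ coincide only when $\alpha_S=\beta_S=0$, in which case the residual binomial is a difference of units which, by the no-common-factor hypothesis of Definition \ref{jota} together with $0<|\alpha|$, is not contained in $\mathfrak p_\xi$; so the order is $0=\min(|\alpha_S|,|\beta_S|)$ anyway.

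Third, I would conclude from these formulas. If $\xi'$ lies in the open neighborhood $\bigcap_{j\notin S(\xi)}\{x_j\neq0\}$ of $\xi$, then $S(\xi')\subseteq S(\xi)$, whence $|\nu_{S(\xi')}|\le|\nu_{S(\xi)}|$ and $\min(|\alpha_{S(\xi')}|,|\beta_{S(\xi')}|)\le\min(|\alpha_{S(\xi)}|,|\beta_{S(\xi)}|)$, giving $\Eord_{\xi'}(f)\le\Eord_\xi(f)$. Thus every point has a neighborhood on which $\Eord_\cdot(f)$ does not exceed its value at the point, which for an integer-valued function is exactly upper semi-continuity (it makes $\{\Eord_\cdot(f)<c\}$ open). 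Equivalently, the family $\mathcal S_c$ of index sets whose formula-value is $\ge c$ is closed upwards for inclusion, so $\{\xi:\Eord_\xi(f)\ge c\}=\bigcup_{S\in\mathcal S_c}E^0_S=\bigcup_{S\ \mathrm{minimal\ in}\ \mathcal S_c}V(x_i:i\in S)$ is a finite union of coordinate subspaces, hence closed.

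The step I expect to be the main obstacle is the second one: justifying the explicit formula for $\Eord_\xi(f)$ in the genuinely binomial case, i.e.\ verifying that passing to the $\mathfrak p_\xi$-adic order turns precisely the $x_j$ with $j\notin S(\xi)$ and all $y$'s into units and that the two surviving monomials cannot cancel to raise the order. This is exactly where the disjointness of $\{\alpha,\beta,\gamma\}$ and the no-common-factor condition are indispensable; once the formula is established, the semi-continuity is the elementary monotonicity of the third step, refining the identity $\Eord_\xi(f)=\ord_{E^0_{\Lambda(\xi)}}(f)$ already exploited in the proof of Proposition \ref{Eorden}.
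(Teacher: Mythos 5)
Your proof is correct, and its second half takes a genuinely different route from the paper's. The openings coincide: finiteness of $\Img(\Eord_J)$ via $\Eord_{\xi}(J)\leq\ord_{\xi}(J)$, and the reduction $\mathcal{F}_m(J)=\bigcap_i\mathcal{F}_m(f_i)$ to a single generator. From there the paper never computes $\Eord_{\xi}(f)$ explicitly: it splits into cases according to whether the coordinates dividing $x^{\alpha}$, $x^{\beta}$ vanish at $\xi$, notes that $\Eord_{\xi}(f)=\ord_{\xi}(f)$ when they all do and $\Eord_{\xi}(f)=0$ otherwise (likewise for hyperbolic equations), and then writes $\mathcal{F}_m(f)$, $m>0$, as $\{\ord(f)\geq m\}\cap(\cup_i V_i)$, so that closedness is inherited from the upper semi-continuity of the usual order function. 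You instead establish the closed combinatorial formula $\Eord_{\xi}(f)=|\nu_{S(\xi)}|+\min\bigl(|\alpha_{S(\xi)}|,|\beta_{S(\xi)}|\bigr)$ --- with the degenerate case $\alpha_{S(\xi)}=\beta_{S(\xi)}=0$ correctly disposed of by disjointness and the no-common-factor hypothesis, and with no cancellation possible between distinct monomials in the associated graded ring --- and you deduce closedness directly: the formula is monotone in $S(\xi)$, so each level set $\{\Eord(f)\geq m\}$ is a finite union of coordinate subspaces. Your route is more self-contained (semi-continuity of $\ord$ enters only through finiteness of the image) and yields more reusable information: it actually proves the identity $\Eord_{\xi}(f)=\min\{|\alpha|_{\xi},|\beta|_{\xi}\}$ that the paper invokes without justification inside the proof of Theorem \ref{gen}, and it pins down the sets $\mathcal{F}_m(f)$ exactly, whereas the paper's identification $\mathcal{F}_m(f_i)=\{\ord(f_i)\geq m\}\cap(\cup_i V_i)$ is loose as literally stated (for $f=x_1-x_2$ and $m=1$ the right-hand side contains points of $V_3$ with $x_1=x_2\neq 0$, where the $E$-order is $0$); your stratum-by-stratum description is what makes that step airtight. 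The trade-off is only length: the paper's case analysis is shorter because it outsources the topology to the known upper semi-continuity of the order function.
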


\begin{proof} 
The subset $\Img(\ord_J)=\{n_1<\ldots<n_{l}\}$ is a finite subset of $\mathbb{Z}$, since $\ord_J:W \rightarrow (\mathbb{Z},\leq)$ where $\ord_J(\xi)=\ord_{\xi}(J)$, is an upper semi-continuous function. Moreover, $\Eord_{\xi}(J)\leq \ord_{\xi}(J)$ for all $\xi\in W$ because of $J$ is a binomial ideal. Then  $\Img(\Eord_J)=\{m_1<\ldots<m_s\}$ where $m_s\leq n_l$, $m_1\geq 0$.

Let $J=<f_1,\ldots, f_r>$ be a system of generators of $J$. The set $$\mathcal{F}_m(J)=\{\xi \in W/\ \Eord_J(\xi)\geq m\}=\mathcal{F}_m(f_1)\cap \cdots\cap \mathcal{F}_m(f_r).$$ It is enough to check each $\mathcal{F}_m(f_i)$ is a closed set for all $1\leq i \leq r$:

Assume $f_i$ has no common factors. Remind that $\Eord_{\xi}(f_i)=\ord_{\xi}(f_i)$, for all $\xi\in W$, when  $f_i(x)=x^{\alpha}$, with $\alpha\in\mathbb{N}^n$.
\begin{enumerate}  
	\item[1.a)] If $f_i(x,y)=y^{\gamma}x^{\alpha}-bx^{\beta}$ is a binomial equation as in equation (\ref{efegamma}), then $\Eord_{\xi}(f_i)=\ord_{\xi}(f_i)$ for every point $\xi\in W$ satisfying $y^{\gamma}(\xi)\neq 0$ and $\xi_j=0$ for any $1\leq j\leq n$ with $\alpha_j>0$ or $\beta_j>0$.
	
In this case, if $m>0$, $$\mathcal{F}_m(f_i)=\{\xi \in W/ \ \Eord_{\xi}(f_i)\geq m\}=\{\xi \in W/\ \ord_{\xi}(f_i)\geq m\}\cap (\cup_{i} V_i)$$ where $E=\{V_1,\ldots,V_n\}$. Therefore $\mathcal{F}_m(f_i)$ is a closed set since $\ord_J$ is an upper semi-continuous function. 

	\item[1.b)] If $f_i(x,y)=y^{\gamma}x^{\alpha}-bx^{\beta}$ is a binomial equation as in equation (\ref{efegamma}), then $\Eord_{\xi}(f_i)=0$ for every point $\xi\in W$ satisfying $y^{\gamma}(\xi)\neq 0$ and $\xi_j\neq 0$ for all $1\leq j\leq n$ with $\alpha_j>0$ or $\beta_j>0$.
	
	\item[2)] If $f_i(y)=1-\mu y^{\delta}$ is a hyperbolic equation, $\delta\in\mathbb{Z}^n$, $\mu\in K$, then $\Eord_{\xi}(f_i)=0$ for every point $\xi\in W$ where $y^{\delta}(\xi)\neq 0$. 
\end{enumerate}

Observe that the points $\xi\in W$ where $\Eord_{\xi}(f_i)=0$ are only included inside the set $$\mathcal{F}_0(f_i)=\{\xi \in W/\ \Eord_\xi(f_i)\geq 0\}$$ which is a closed set since $\mathcal{F}_0(f_i)=W$. 
	
Therefore, $\mathcal{F}_{m_j}(J)$ is a closed set for all $1\leq j \leq s$. 
\end{proof}

\begin{corollary} Let $J\subset \mathcal{O}_{W}$ be a binomial ideal as in definition \ref{jota}. Let $c$ be a positive integer number. Then $\ESing(J,c)$ is a closed set. 
\end{corollary}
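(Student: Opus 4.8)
The plan is to recognize $\ESing(J,c)$ directly as one of the upper level sets of the $E$-order function and then invoke the upper semi-continuity just established in Proposition \ref{usc}. By the very definition of the $E$-singular locus,
$$\ESing(J,c)=\{\xi\in W/\ \Eord_{\xi}(J)\geq c\}=\mathcal{F}_c(J),$$
in the notation used in the proof of Proposition \ref{usc}. Since that proposition asserts that $\Eord_J$ is upper semi-continuous, its image $\Img(\Eord_J)=\{m_1<\cdots<m_s\}$ is a finite subset of $\mathbb{Z}$, and each level set $\mathcal{F}_{m_j}(J)$ is closed in $W$.

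The only point that needs a word of care is that the chosen integer $c$ need not itself belong to the finite image $\Img(\Eord_J)$, whereas Proposition \ref{usc} literally provides closedness only for the thresholds $m_j$ attained by the function. I would dispose of this as follows. If $c>m_s$, then no point $\xi$ satisfies $\Eord_{\xi}(J)\geq c$, so $\ESing(J,c)=\emptyset$, which is closed. Otherwise there is a smallest element $m_j\in\Img(\Eord_J)$ with $m_j\geq c$; because $\Eord_J$ takes values only in $\{m_1,\ldots,m_s\}$, the inequality $\Eord_{\xi}(J)\geq c$ holds if and only if $\Eord_{\xi}(J)\geq m_j$, so that $\ESing(J,c)=\mathcal{F}_{m_j}(J)$. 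This last set is closed by Proposition \ref{usc}, and hence $\ESing(J,c)$ is closed in either case.

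There is essentially no obstacle here: once upper semi-continuity of $\Eord_J$ is in hand, the corollary is a purely formal consequence, exactly as in the classical case where $\Sing(J,c)$ is deduced to be closed from upper semi-continuity of the ordinary order function. The single minor subtlety is the discretization argument above, reducing an arbitrary threshold $c$ to an attained value $m_j$, and this is immediate from the finiteness of $\Img(\Eord_J)$.
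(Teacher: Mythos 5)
Your proof is correct and is exactly the argument the paper intends: the paper states this corollary with no written proof, treating it as an immediate consequence of Proposition \ref{usc}, which is precisely what you do by identifying $\ESing(J,c)$ with an upper level set of $\Eord_J$. Your extra care in reducing an arbitrary threshold $c$ to an attained value $m_j$ (or to the empty set) is a legitimate detail that the paper's definition of upper semi-continuity technically requires, and it is handled correctly.
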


\begin{definition}
Let $t$ be an upper semi-continuous function in $W$. The \emph{top locus} of $t$ is the reduced closed sub-scheme of  $W$ where $t$ reaches its maximum value, that is, $top(t)=\{\xi\in W\ |\ t(\xi)=\max\ t\}.$
\end{definition}

\begin{definition} Let $\mathcal{J}$ be a coherent ideal sheaf in $W$.
\begin{itemize}
	\item[-] The set $\Etop(\mathcal{J})=top(\Eord(\mathcal{J}))$ is said to be the \emph{E-top locus} of $\mathcal{J}$.
	\item[-] Let $c$ be a positive integer number, $\Etop(\mathcal{J},c)=\{\xi\in W\ |\ \Eord_{\xi}(\mathcal{J})\geq c\}.$
\end{itemize}
\end{definition}

\noindent In addition, the $E$-order function is invariant by the torus action.   

\begin{remark}
Let $\varphi$ be the homomorphism of tori 
$$\begin{array}{cccc}\varphi: & \mathcal{T}^d & \rightarrow & \mathcal{T}^n \\ & t & \rightarrow & (t^{a_1},\ldots,t^{a_n}) \end{array}$$ where $\{a_1\ldots,a_n\}\in \mathbb{Z}^d$.

We work with binomial ideals non necessarily toric ideals. Consider the torus action $\mathcal{T}^d$ on $\mathbb{A}^n_K$ given by 
\begin{equation} \label{toro1}
\begin{array}{ccc}\mathcal{T}^d\times \mathbb{A}^n_K & \rightarrow & \mathbb{A}^n_K \\ (t,\xi) & \rightarrow & (t^{a_1}\xi_1,\ldots,t^{a_n}\xi_n) \end{array}
\end{equation} 
extending the homomorphism $\varphi$. 

Note that the torus action (\ref{toro1}) on $\mathbb{A}^n_K$ induces a torus action on the open subset $Spec(K[x,y]_y)\subset\mathbb{A}^n_K$ given by 
\begin{equation} \label{toro2}
\begin{array}{ccc}\mathcal{T}^d\times Spec(K[x,y]_y) & \rightarrow & Spec(K[x,y]_y) \\ (t,\xi) & \rightarrow & (t^{a_1}\xi_1,\ldots,t^{a_n}\xi_n) \end{array}
\end{equation}
\end{remark}

\begin{proposition} \label{equitoro} Let $J\subset \mathcal{O}_{W}$ be a binomial ideal as in (\ref{jota}). Let $X$ be the binomial variety corresponding to the ideal $J$, where $dim(X)=d$. Then $$\Eord_{\xi}(J)=\Eord_{\mathcal{T}^d(\xi)}(J)$$ for all $\xi\in W$, where $\mathcal{T}^d$ is the torus action (\ref{toro2}).
\end{proposition}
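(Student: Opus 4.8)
The plan is to reduce the statement to two facts: that the torus action~(\ref{toro2}) maps each point $\xi$ to a point lying in the same stratum type $E_{\Lambda(\xi)}^{0}$, and that for a binomial equation the $E$-order at $\xi$ depends only on $\Lambda(\xi)$. For the first fact, fix $t$ in the torus and write $\sigma_t(\xi)=\mathcal{T}^d(\xi)=(t^{a_1}\xi_1,\ldots,t^{a_n}\xi_n)$. Since each scalar $t^{a_i}$ is a unit, we have $\xi_i=0$ if and only if $(\sigma_t(\xi))_i=0$; as only the hypersurfaces $V(x_1),\ldots,V(x_s)$ enter the stratification on $W=Spec(K[x,y]_y)$, this yields $\Lambda(\xi)=\Lambda(\sigma_t(\xi))$, so that $\xi$ and $\sigma_t(\xi)$ determine the same ideal $I(E_{\Lambda(\xi)}^{0})$ up to the isomorphism of local rings induced by the automorphism $\sigma_t$ of $W$.

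For the second fact, let $f=y^{\gamma}x^{\alpha}-bx^{\beta}$ be a binomial equation without common factors as in~(\ref{efegamma}), and put $\Lambda=\Lambda(\xi)$. On $W$ the factor $y^{\gamma}$ is a unit, and the variables $x_i$ with $i\notin\Lambda$ are units in $\mathcal{O}_{W,\xi}$; collecting all of these into the coefficients, $f$ becomes $U\prod_{i\in\Lambda}x_i^{\alpha_i}-V\prod_{i\in\Lambda}x_i^{\beta_i}$ with $U$ a unit and $V$ a unit exactly when $b\neq 0$. Because $\alpha$ and $\beta$ have disjoint supports, the two monomials $\prod_{i\in\Lambda}x_i^{\alpha_i}$ and $\prod_{i\in\Lambda}x_i^{\beta_i}$ are distinct, so no cancellation of lowest-order terms can occur in the $I(E_{\Lambda}^{0})$-adic filtration. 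Hence
\[
\Eord_{\xi}(f)=\min\Bigl(\sum_{i\in\Lambda}\alpha_i,\ \sum_{i\in\Lambda}\beta_i\Bigr)
\]
(with the second sum read as $+\infty$ when $b=0$), which is precisely the value already extracted in the proofs of Propositions~\ref{Eorden} and~\ref{usc}. This expression involves only $\Lambda$ and the fixed exponents of $f$, not the particular nonzero coordinates of $\xi$. The same holds trivially for hyperbolic equations, where the $E$-order is identically $0$, and for an equation with a monomial factor $x^{\nu}$ one simply adds $\sum_{i\in\Lambda}\nu_i$, which again depends only on $\Lambda$.

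Combining the two facts, for each binomial generator $f_i$ of $J$ we obtain $\Eord_{\xi}(f_i)=\Eord_{\sigma_t(\xi)}(f_i)$, since both sides are computed from the common set $\Lambda(\xi)=\Lambda(\sigma_t(\xi))$. Finally, for any generating set $\{f_i\}$ one has $\Eord_{\xi}(J)=\max\{m\in\mathbb{N}\ /\ J_{\xi}\subset (I(E_{\Lambda(\xi)}^{0})_{\xi})^m\}=\min_i \Eord_{\xi}(f_i)$, because $J_{\xi}\subset I^m$ if and only if every $f_i\in I^m$. Applying this equality at $\xi$ and at $\sigma_t(\xi)$ gives $\Eord_{\xi}(J)=\Eord_{\mathcal{T}^d(\xi)}(J)$, as claimed. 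The step requiring the most care is the second fact: one must check that the leading terms of the binomial do not cancel, which is exactly what the disjointness of $\alpha$ and $\beta$ guarantees; once $\Eord_{\xi}(f)$ is seen to be a function of $\Lambda(\xi)$ alone, invariance under the torus is immediate.
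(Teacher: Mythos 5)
Your proposal is correct, but it is organized differently from the paper's own argument. The paper works equation-by-equation: for each generator $f=y^{\gamma}x^{\alpha}-bx^{\beta}$ it computes the pullback under the torus, $f(\mathcal{T}^d(x,y))=t^{\sum_i a_i(\gamma_i+\alpha_i)}y^{\gamma}x^{\alpha}-b\,t^{\sum_i a_i\beta_i}x^{\beta}$, observes that this is again a binomial with the \emph{same exponents} and coefficients rescaled by nonzero scalars, and concludes the $E$-order is unchanged; the passage from generators to the whole ideal is only asserted there. You instead work point-by-point: the torus preserves each $V(x_i)$, hence each stratum, so $\Lambda(\xi)=\Lambda(\mathcal{T}^d(\xi))$, and the $E$-order of a binomial is a function of $\Lambda(\xi)$ and the exponents alone, via $\Eord_{\xi}(f)=\min\bigl(\sum_{i\in\Lambda}\alpha_i,\sum_{i\in\Lambda}\beta_i\bigr)$. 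Note that this second fact is exactly the paper's unproved remark (constancy of the $E$-order along the strata $E^0_{\Lambda}$) following Definition \ref{Eord}, and the explicit formula is the one invoked inside the proof of Theorem \ref{gen}, rather than in Propositions \ref{Eorden} or \ref{usc} as you cite. Your route buys rigor in two places where the paper is terse: you prove the stratum-dependence rather than assume it, and you supply the generator-to-ideal reduction $\Eord_{\xi}(J)=\min_i\Eord_{\xi}(f_i)$, which is valid since $J_{\xi}\subset I^m$ exactly when every generator lies in $I^m$; the paper's route buys brevity and a literal equivariance statement about the equations themselves. One small caveat in your no-cancellation step: if both $\alpha$ and $\beta$ are supported outside $\Lambda$, the two monomials $\prod_{i\in\Lambda}x_i^{\alpha_i}$ and $\prod_{i\in\Lambda}x_i^{\beta_i}$ both equal $1$ and so are \emph{not} distinct; the claimed value $0$ is still correct, but because the resulting unit combination $U-V$ restricts to a difference of distinct Laurent monomials on the stratum (hence a nonzero element of its local ring, a domain), not because the $\Lambda$-parts of the monomials differ.
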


\begin{proof} By hypothesis $J$ is given by equations of the form $f(x,y)=y^{\gamma}x^{\alpha}-bx^{\beta}$ or 
$g(y)=(1-\mu y^{\delta})$. Fix $\xi\in W$ such that $\Eord_{\xi}(f)=|\alpha|$. Since $\xi\in W$ then $\mathcal{T}^d(\xi)=(t^{a_1}\xi_1,\ldots,t^{a_n}\xi_n)\in W$ therefore $$f(\mathcal{T}^d(x,y))=t^{\sum_i a_i\gamma_i}y^{\gamma}t^{\sum_i a_i\alpha_i}x^{\alpha}-bt^{\sum_i a_i\beta_i}x^{\beta}$$ so $\Eord_{\mathcal{T}^d(\xi)}(f)=|\alpha|$. The $E$-order remains constant whereas $t_i\neq 0$ for all $i=1,\ldots,d$.

Observe that $\Eord_{\mathcal{T}^d(\xi)}(g)=0=\Eord_{\xi}(g)$. \\ 
This prove the assertion for any generator of $J$, and therefore for any equation belonging to $J$. 
\end{proof}

\begin{remark} \label{equivdef} In what follows, an invariant function by the torus action will be called an \emph{equivariant} function.
\end{remark}

The $E$-order is a suitable function to deal with the problem of the existence of hypersurfaces of maximal contact over fields of positive characteristic. In the particular case of binomial ideals, this modified order function ensures the existence of such hypersurfaces. 

\begin{definition} Let $J\subset \mathcal{O}_W$ be a binomial ideal as in definition \ref{jota}. Let $\xi\in W$ be a point such that $\Eord_{\xi}(J)=\max\ \Eord(J)=\theta$, $V$ is said to be a hypersurface of  \emph{maximal contact along $E$} for $J$ at $\xi$ (denoted by hypersurface of \emph{E\text{-}maximal contact}) if 
\begin{itemize}
	\item [-] $V$ is a regular hypersurface, $\xi\in V$,
	\item [-] $\ESing(J,\theta) \subseteq V$ and their transforms under blowing up along a center $Z\subset V$ also 
	          satisfy $\ESing(J',\theta) \subseteq V'$, where $J'$ is the controlled transform of $J$ and $V'$ is the strict transform of $V$. 
\end{itemize}
\end{definition}  

\begin{remark}
As a consequence, if $V$ is a hypersurface of $E$-maximal contact for $J$ and we have a sequence of blow ups   $$\begin{array}{rrrr} (W,J) & \stackrel{\pi_1}\longleftarrow (W^{(1)},J^{(1)}) & \stackrel{\pi_2}\longleftarrow \cdots & \stackrel{\pi_N}\longleftarrow (W^{(N)},J^{(N)}) \\ Z\subset V & Z^{(1)}\subset V^{(1)} & & Z^{(N)}\subset V^{(N)}  \end{array}$$ with centers $Z^{(i)}\subset V^{(i)}$, where $V^{(i)}$ is the strict transform of $V^{(i-1)}$, satisfying $$\hspace*{-0.5cm} \begin{array}{ccc} \Eord_{\xi}(J)\!=\!\max\ \Eord(J),& \hspace*{-0.2cm} \Eord_{\xi^{(1)}}(J^{(1)})\!=\!\max\ \Eord(J^{(1)}),\ldots,& \hspace*{-0.2cm} \Eord_{\xi^{(N)}}(J^{(N)})\!=\!\max\ \Eord(J^{(N)}) \\ \pi_1(\xi^{(1)})=\xi & \hspace*{-0.2cm} \pi_2(\xi^{(2)})=\xi^{(1)} & \hspace*{-0.2cm} \pi_N(\xi^{(N)})=\xi^{(N-1)} \end{array}$$ and $$\max\ \Eord(J)=\max\ \Eord(J^{(1)})=\cdots=\max\ \Eord(J^{(N)})=\theta$$ then $$\ESing(J^{(1)},\theta)\subseteq V^{(1)}, \ldots, \ESing(J^{(N)},\theta)\subseteq V^{(N)}.$$ The strict transforms of $V$ preserve this property while the maximum $E$-order remains constant. 

\end{remark}

\begin{definition} Let $f$ be a binomial equation in $\mathcal{O}_{W}$. Let $a\in W$ be a point. The 
\emph{equimultiple locus of $a$ along $E$} for $f$ is the set of points where $f$ has the same $E$-order as at the point $a$  $$S_{f,E}(a)=\{\xi\in W\ | \ \Eord_{\xi}(f)=\Eord_{a}(f) \}.$$  
\end{definition}

\begin{theorem} \label{gen} Let $f(x,y)=y^{\gamma}x^{\alpha}-bx^{\beta}\in K[x,y]_y$ be a binomial equation as in equation (\ref{efegamma}). Let $a\in Spec(K[x,y]_y)$ be a point satisfying $a_i=0$ for all $i$ with $\alpha_i>0$, $y^{\gamma}(a)\neq 0$ and $\Eord_a(f)=|\alpha|$. 

If $|\alpha|>0$ then $$S_{f,E}(a)\subset \bigcap_{\{i|\ \alpha_i>0\}}\{x_i=0\}.$$
\end{theorem}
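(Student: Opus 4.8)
The plan is to reduce the whole statement to one explicit formula for $\Eord_\xi(f)$ recording which coordinate hyperplanes pass through $\xi$, and then to finish by a short combinatorial comparison. For a point $\xi\in W$ set $S_\xi=\{\,i:1\le i\le s,\ x_i(\xi)=0\,\}$, so that in $\mathcal{O}_{W,\xi}$ the ideal $I(E^0_{\Lambda(\xi)})_\xi$ is generated by the variables $x_i$ with $i\in S_\xi$, while every $x_j$ with $j\le s$, $j\notin S_\xi$ is a unit there, and $y^{\gamma}$ is a unit because we work in $K[x,y]_y$. Writing $|\alpha_{S_\xi}|=\sum_{i\in S_\xi}\alpha_i$ and $|\beta_{S_\xi}|=\sum_{i\in S_\xi}\beta_i$, I claim
$$\Eord_\xi(f)=\min\bigl(|\alpha_{S_\xi}|,\,|\beta_{S_\xi}|\bigr).$$

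First I would establish this formula. Pulling the units $y^{\gamma}$ and the $x_j$ with $j\notin S_\xi$ out of each monomial, $f$ becomes, up to units, $u\,x^{\alpha_{S_\xi}}-b\,x^{\beta_{S_\xi}}$ with $u$ a unit, so its $I(E^0_{\Lambda(\xi)})_\xi$-adic order can be read off from the initial form in $\mathrm{gr}_{I}(\mathcal{O}_{W,\xi})$, which is a polynomial ring over $\mathcal{O}_{W,\xi}/I$ in the variables $\{x_i\}_{i\in S_\xi}$. The only subtlety — and the step I expect to be the real content — is the case $|\alpha_{S_\xi}|=|\beta_{S_\xi}|$, where one must rule out cancellation of the two leading terms. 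This is exactly where the hypothesis that $\alpha$ and $\beta$ are disjoint (equivalently, $\alpha$ is supported in $x_1,\ldots,x_k$ and $\beta$ in $x_{k+1},\ldots,x_s$) enters: $x^{\alpha_{S_\xi}}$ and $x^{\beta_{S_\xi}}$ are distinct monomials, so their difference with a unit coefficient on the first is a nonzero homogeneous element of the expected degree, and no cancellation occurs; in the unequal case the initial form is a single monomial with unit coefficient, again nonzero of degree $\min(|\alpha_{S_\xi}|,|\beta_{S_\xi}|)$. This is the same local computation already carried out in the proof of Proposition \ref{usc}, so I would invoke that analysis rather than repeat it.

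With the formula in hand the theorem is immediate. Since $a_i=0$ for every $i$ with $\alpha_i>0$, all indices carrying $\alpha$ lie in $S_a$, whence $|\alpha_{S_a}|=|\alpha|$; together with $\Eord_a(f)=|\alpha|$ this fixes the value we compare against. Now take any $\xi\in S_{f,E}(a)$, i.e. any $\xi$ with $\Eord_\xi(f)=|\alpha|$. The formula gives $|\alpha|=\Eord_\xi(f)\le |\alpha_{S_\xi}|$, while trivially $|\alpha_{S_\xi}|=\sum_{i\in S_\xi}\alpha_i\le\sum_{\{i\,|\,\alpha_i>0\}}\alpha_i=|\alpha|$. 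Hence $|\alpha_{S_\xi}|=|\alpha|$, and because the $\alpha_i$ are nonnegative this equality forces $\{\,i:\alpha_i>0\,\}\subseteq S_\xi$, i.e. $x_i(\xi)=0$ for every $i$ with $\alpha_i>0$. Thus $\xi\in\bigcap_{\{i\,|\,\alpha_i>0\}}\{x_i=0\}$, which is the claimed inclusion; the assumption $|\alpha|>0$ is what guarantees that $\{i:\alpha_i>0\}$ is nonempty, so that the statement is not vacuous.
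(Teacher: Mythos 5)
Your proof is correct and takes essentially the same route as the paper: both rest on the key formula $\Eord_{\xi}(f)=\min\{|\alpha|_{\xi},|\beta|_{\xi}\}$ (in your notation $\min(|\alpha_{S_\xi}|,|\beta_{S_\xi}|)$), reading the $E$-order off the exponents supported at the coordinates vanishing at $\xi$, and then conclude combinatorially. The differences are cosmetic: the paper finishes by a case-by-case contradiction over which $\xi_i\neq 0$, whereas you use the direct sandwich $|\alpha|\leq|\alpha_{S_\xi}|\leq|\alpha|$, and you additionally justify the formula (initial forms in $\mathrm{gr}_I(\mathcal{O}_{W,\xi})$, with disjointness of the supports of $\alpha$ and $\beta$ ruling out cancellation) where the paper merely asserts it.
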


\begin{proof} Let $\xi\in S_{f,E}(a)$ be a point verifying $y^{\gamma}(\xi)\neq 0$, $\Eord_{\xi}(f)=|\alpha|$. Suppose $\alpha_i>0$ for every $1\leq i\leq k$. If $\xi\notin \{x_1=\ldots =x_k=0\}$ then $\xi_i\neq 0$ for some $1\leq i \leq k$. 

Let assume $\xi$ verifies $\xi_k\neq 0$, $\xi_1=\ldots=\xi_{k-1}=0$.\\ The $E$-order can be computed in terms of the exponents, $\Eord_{\xi}(f)=min\{|\alpha|_{\xi},|\beta|_{\xi}\}$ where $$|\alpha|_{\xi}\!=\!\sum_{\{i|\ \xi_i=0,\alpha_i>0\}} \alpha_i \text{\ and \ } |\beta|_{\xi}\!=\!\sum_{\{i|\ \xi_i=0,\beta_i>0\}} \beta_i.$$ If $\xi_k\neq 0$ then $\Eord_{\xi}(f)\leq \alpha_1+\ldots+\alpha_{k-1}<|\alpha|$, that is, by definition of $E$-order, $$<f>_{\xi}\subset (I(E^0_{\Lambda(\xi)})_{\xi})^{|\alpha|-\alpha_k}=<x_1,\ldots,x_{k-1},x_{k+i_1},\ldots,x_{k+i_s}>^{|\alpha|-\alpha_k}$$ but $$<f>_{\xi}\not\subset <x_1,\ldots,x_{k-1},x_{k+i_1},\ldots,x_{k+i_s}>^{|\alpha|}$$ for $i_j$ such that  $\beta_{i_j}>0$ and $\xi_{i_j}=0$ for each $1\leq j\leq s$, and $\sum_{j=1}^s\beta_{i_j}\geq |\alpha|$, contradiction.

In the same way, if $\xi_1=\ldots=\xi_{l}=0$ and $\xi_j\neq 0$ for all $l+1\leq j\leq k$ then $\Eord_{\xi}(f)=|\alpha|-\sum_{j=l+1}^{k}\alpha_j$, contradiction.

If $\xi_j\neq 0$ for all $1\leq j \leq k$, then $\Eord_{\xi}(f)=0$ contradiction.
\end{proof}

\begin{corollary} \label{esing} Let $J\subset K[x,y]_y$ be a binomial ideal as in definition \ref{jota}. Let $\xi \in Spec(K[x,y]_y)$ be a point where $\Eord_{\xi}(J)=\theta>0$ is maximal. Let $f\in J$ be a binomial equation such that  $\Eord_{\xi}(f)=\theta$ and $f$ satisfies the hypothesis of the Theorem \ref{gen}. Then in a neighborhood of $\xi$, $$\ESing(J,\theta) \subseteq \{x_i=0\} \text{ for some } 1\leq i\leq k. $$  
\end{corollary}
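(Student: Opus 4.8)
The plan is to reduce the statement about the whole ideal $J$ to the equimultiplicity statement for the single equation $f$ already proved in Theorem \ref{gen}, with the point $\xi$ playing the role of the point $a$ there. Note first that the hypotheses identify $\theta$: since $f$ satisfies the hypothesis of Theorem \ref{gen} with $a=\xi$, we have $\Eord_\xi(f)=|\alpha|$, while by assumption $\Eord_\xi(f)=\theta$, so $\theta=|\alpha|$.

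First I would record the elementary comparison between the $E$-order of an ideal and that of any of its elements. Because $f\in J$, at every point $\eta$ one has $f\in J_\eta\subset \big(I(E^0_{\Lambda(\eta)})_\eta\big)^{\Eord_\eta(J)}$, whence $\Eord_\eta(f)\geq \Eord_\eta(J)$. In particular, if $\eta\in\ESing(J,\theta)$ then $\Eord_\eta(J)\geq\theta$, and since $\theta=\max\,\Eord(J)$ is maximal this forces $\Eord_\eta(J)=\theta$, and therefore $\Eord_\eta(f)\geq\theta$.

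Next I would bound $\Eord_\eta(f)$ from above on a suitable neighborhood of $\xi$. Because $y^\gamma(\xi)\neq 0$, the open set $\{y^\gamma\neq 0\}$ is a neighborhood of $\xi$, and on it the $E$-order of $f$ is computed by the combinatorial formula established inside the proof of Theorem \ref{gen}, namely $\Eord_\eta(f)=\min\{|\alpha|_\eta,|\beta|_\eta\}$ with $|\alpha|_\eta=\sum_{\{i|\,\eta_i=0,\ \alpha_i>0\}}\alpha_i\leq|\alpha|$. Hence $\Eord_\eta(f)\leq|\alpha|=\theta$ throughout this neighborhood. Combining the two bounds gives $\Eord_\eta(f)=\theta=\Eord_\xi(f)$, that is, $\eta\in S_{f,E}(\xi)$.

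Finally I would invoke Theorem \ref{gen} with $a=\xi$, whose hypotheses hold by assumption, to get $S_{f,E}(\xi)\subset\bigcap_{\{i|\,\alpha_i>0\}}\{x_i=0\}$. Since the positive coordinates of $\alpha$ are exactly the first $k$, every $\eta\in\ESing(J,\theta)$ in this neighborhood lies in $\bigcap_{i=1}^{k}\{x_i=0\}$, which is in particular contained in $\{x_i=0\}$ for each, hence for some, $1\leq i\leq k$, as claimed. The one delicate point is the upper bound of the middle step: one must restrict to the neighborhood $\{y^\gamma\neq 0\}$ so that the combinatorial formula for the $E$-order is valid and the $y$-contribution drops out; away from this locus the $E$-order of $f$ need not be bounded by $|\alpha|$, so the localization is essential. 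Everything else is a direct chaining of the two inequalities together with Theorem \ref{gen}.
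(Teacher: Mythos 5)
Your proof is correct and takes essentially the same route as the paper: the paper's entire proof is the single observation that $\ESing(J,\theta)\subseteq S_{f,E}(\xi)$, followed by an application of Theorem \ref{gen} with $a=\xi$. Your argument simply supplies the justification of that inclusion which the paper leaves implicit, namely the lower bound $\Eord_\eta(f)\geq\Eord_\eta(J)\geq\theta$ from $f\in J$, and the upper bound $\Eord_\eta(f)\leq|\alpha|=\theta$ from the combinatorial formula valid on the neighborhood $\{y^\gamma\neq 0\}$ of $\xi$.
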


\begin{proof} It is enough to observe that $\ESing(J,\theta)\subseteq S_{f,E}(\xi)$.
\end{proof}

\begin{remark}
As a consequence, the hypersurfaces of $E$-maximal contact will always be given by coordinate equations. The existence of these hypersurfaces will be proved in lemma \ref{hiper}. Hence the centers of blowing up will always be combinatorial. 
\end{remark}

\section{$E$-resolution function} \label{resfun}

\subsection{Induction on the dimension} \label{subsec:centro}

In this section we rewrite \emph{mobiles language} in order to use this language with the $E$-order function, and to make induction on the dimension of the ambient space. See \cite{strong} for more details.

Given $(W,(J,c),H,E)$ a binomial basic object along $E$, by induction on the dimension of $W$, construct ideals $J_i$ defined in local flags $W=W_n\supseteq W_{n-1}\supseteq \cdots \supseteq W_i\supseteq  \cdots \supseteq W_1$, and then binomial basic objects $(W_i,(J_i,c_{i+1}),H_i,E_i)$ in dimension $i$, where each $E_i=W_i\cap E$. 

\begin{remark} \label{fac}
If $\ESing(J_i,c_{i+1})\neq\emptyset$ then factorize the ideal $J_i=M_i\cdot I_i$, where each ideal $M_i$ is defined by a normal crossings divisor $D_i$ supported by the current exceptional locus. 
\end{remark}
 
In what follows we will define the ideals $J_{i-1}$, $n\geq i> 1$. We need the auxiliary definition of the companion ideals $P_i$. 

\begin{definition} Let $J_i=M_i\cdot I_i$ be an ideal in $W_i$ at $\xi\in W_i$. Set $\theta_i=\Eord_{\xi}(I_i)$. 
The {\it companion ideal} of $J_i$ at $\xi$, with respect to the critical value $c_{i+1}$ satisfying $\Eord_{\xi}(J_i)\geq c_{i+1}$, is the ideal 
$$P_i= \left\{\begin{array}{ll} I_i & \text{ if }\ \theta_i\geq
c_{i+1} \\ I_i+M_i^{\frac{\theta_i}{c_{i+1}-\theta_i}} & \text{ if }\ 0< \theta_i< c_{i+1} \end{array}\right.$$ 
\end{definition}   

\begin{remark}
If $\theta_i=0$ but $\ord_{\xi}(I_i)\neq 0$ then there exists some hyperbolic equation belonging to the ideal $I_i$. In this case, the ideal $I_i\neq 1$ but it will be treated as $I_i\equiv 1$, since $J_i$ behaves like a monomial ideal with respect to the $E$-order. So, in this situation, $J_i\equiv M_i$.    
\end{remark}

\begin{remark}   
On one hand, the weight assigned to the ideal $M_i$ ensures that $\Eord(P_i)=\Eord(I_i)$ along the points of maximal $E$-order, where $\Etop(P_i)\subset \Etop(I_i)$. So $P_i=1$ if and only if $I_i=1$ or $I_i\equiv 1$. On the other hand, $$\Etop(P_i)=\Etop(I_i)\cap \Etop(M_i,c_{i+1}-\theta_i)\subset \Etop(J_i,c_{i+1})$$ what ensures $\ESing(P_i,\theta_i)\subset \ESing(J_i,c_{i+1})$ when $0< \theta_i< c_{i+1}$. 
\end{remark}

\begin{remark} In addition, this weight guarantees the weak transform of $P_i$ by a blow up $\pi$ satisfies  $$P_i^{\curlyvee}=(I_i+M_i^{\frac{\theta_i}{c_{i+1}-\theta_i}})^{\curlyvee}=I_i^{\curlyvee}+(M_i')^{\frac{\theta'_i}{c_{i+1}'-\theta'_i}}$$ when $\theta'_i=\theta_i$ and $c_{i+1}'=c_{i+1}$, where $\theta'_i=\Eord_{\xi'}(I_i')$ in a neighborhood of the point $\xi'$ such that $\pi(\xi')=\xi$.
\end{remark}

\begin{definition} Let $J_i$ be an ideal in $W_i$ at $\xi\in W_i$. Let $P_i$ be the companion ideal of $J_i$ in a neighborhood of the point $\xi$ with respect to the critical value $c_{i+1}$.

The critical value $c_i$ corresponding to dimension $i-1$ is $c_i=\Eord_{\xi}(P_i)$. 
\end{definition} 

\begin{definition} \label{Ecoefideal} Let $P$ be an ideal in $W$, let $V\subset W$ be a regular hypersurface, and let $\xi\in V$ be a point. Let $\{z,w\}$ be a regular system of parameters of $\mathcal{O}_{W,\xi}$ and let $\{w\}$ be a regular system of parameters of $\mathcal{O}_{V,\xi}$ such that $\{z=0\}$ defines $V$ in $W$. 

For all $f\in P$, let $f=\sum_{\alpha}a_{f,\alpha}z^{\alpha}$ be its Taylor expansion with respect to the equation defining $V$, so that $a_{f,\alpha}\in \mathcal{O}_{V,\xi}$ and this equality holds after passage to the completion.

The \emph{coefficient ideal of $P$ along $E$} at $\xi$ with respect to $V$ is the ideal $$\ECoeff_V(P)=<\ECoeff_V(f),f\in P>=\sum_{f\in P, |\alpha|<c}(a_{f,\alpha})^{\frac{c}{c\text{-}|\alpha|}}$$ where $c$ is the suitable critical value. 
\end{definition}

\begin{remark} To avoid rational exponents take $\frac{c!}{c\text{-}|\alpha|}$ instead of $\frac{c}{c\text{-}|\alpha|}$ in the previous definition.  

The pair $({\BECoeff}_V(P),c!)$ whose exponents are integer numbers is equivalent to the pair $(\ECoeff_V(P),c)$ with rational exponents. 
\end{remark}

\begin{proposition} \label{Ecoeford} Let $P$ be an ideal in $W$, let $V\subset W$ be a regular hypersurface. Then \begin{equation} \Eord_{\xi}(\ECoeff_V(P))\geq \Eord_{\xi}(P) \label{desigual}
\end{equation} for all point $\xi\in V$.  
\end{proposition}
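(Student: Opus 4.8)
The plan is to compare the two $E$-orders by using the definition of the coefficient ideal along $E$ together with the key inequality $\Eord_{\xi}(f) = \ord_{E^0_{\Lambda(\xi)}}(f)$ established in Proposition \ref{Eorden}. The essential idea, borrowed from the characteristic-zero theory of coefficient ideals, is that the $E$-order behaves additively on products and that each generator $(a_{f,\alpha})^{c/(c-|\alpha|)}$ of $\ECoeff_V(P)$ acquires its $E$-order from the Taylor coefficients $a_{f,\alpha}$, whose $E$-orders are in turn controlled by the $E$-order of $f$ itself.

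First I would fix a point $\xi\in V$ and write $\theta=\Eord_{\xi}(P)$, so that for every $f\in P$ we have $\Eord_{\xi}(f)\geq\theta$. Using the regular system of parameters $\{z,w\}$ with $\{z=0\}$ defining $V$, I would examine the Taylor expansion $f=\sum_{\alpha}a_{f,\alpha}z^{\alpha}$. The crucial observation is that computing $E$-order amounts to computing ordinary order along the stratum $E^0_{\Lambda(\xi)}$, and since the centers here are combinatorial (the hypersurface $V$ is itself coordinate, $V=\{z=0\}$ with $z$ one of the $x$-variables), the variable $z$ is a coordinate that enters into the $I(E^0_{\Lambda(\xi)})$-adic filtration. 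Consequently the $E$-order of the monomial piece $a_{f,\alpha}z^{\alpha}$ decomposes as $\Eord_{\xi}(a_{f,\alpha})+|\alpha|$, and from $\Eord_{\xi}(f)\geq\theta$ one deduces for each $\alpha$ with $|\alpha|<c$ the bound $\Eord_{\xi}(a_{f,\alpha})\geq \theta-|\alpha|$, hence $\Eord_{\xi}(a_{f,\alpha})\geq c-|\alpha|$ in the relevant range where $\theta\geq c$, or more carefully the bound that makes the weighted exponent work out.

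Next I would raise to the prescribed power: since $E$-order scales under taking powers, $\Eord_{\xi}\bigl((a_{f,\alpha})^{c/(c-|\alpha|)}\bigr)=\frac{c}{c-|\alpha|}\,\Eord_{\xi}(a_{f,\alpha})\geq \frac{c}{c-|\alpha|}\,(c-|\alpha|)\cdot\frac{\theta}{c}\geq \theta$ once the correct normalization $\Eord_{\xi}(a_{f,\alpha})\geq (c-|\alpha|)\theta/c$ is in place. Taking the minimum over all generators $f\in P$ and all admissible $\alpha$, and using that the $E$-order of a sum of ideals is the minimum of the $E$-orders of the summands, yields $\Eord_{\xi}(\ECoeff_V(P))\geq\theta=\Eord_{\xi}(P)$, which is exactly (\ref{desigual}).

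The main obstacle I anticipate is justifying rigorously that the $E$-order splits additively across the factor $z^{\alpha}$ and the coefficient $a_{f,\alpha}\in\mathcal{O}_{V,\xi}$, i.e.\ that $\Eord_{\xi}(a_{f,\alpha}z^{\alpha})=\Eord_{\xi}(a_{f,\alpha})+|\alpha|$. This requires that $z$ be among the coordinates cutting out the stratum $E^0_{\Lambda(\xi)}$ (so that it genuinely contributes to the $I(E^0_{\Lambda(\xi)})$-adic order) and that the completion step in Definition \ref{Ecoefideal} does not interfere with the $E$-order computation. Since $V$ is a coordinate hypersurface $V(x_j)$ and $E\cap W$ consists exactly of the coordinate hypersurfaces $V(x_i)$, this splitting is precisely the additivity already recorded in the remark following Proposition \ref{Eorden}; the careful bookkeeping of the rational exponent $c/(c-|\alpha|)$ against the bound $\Eord_{\xi}(a_{f,\alpha})\geq (c-|\alpha|)\theta/c$ is then the only genuinely delicate arithmetic, and I would verify it separately in the two regimes $\theta\geq c$ and $\theta<c$.
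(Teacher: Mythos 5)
Your strategy is the same as the paper's: expand each $f\in P$ with respect to $z$, bound the $E$-orders of the Taylor coefficients, scale by the exponent $\frac{c}{c-|\alpha|}$, and take the minimum over generators. In the regime $\theta=c$ your computation is word for word the paper's proof, whose very first line is ``Set $c=\Eord_{\xi}(P)$'': the paper identifies the critical value of Definition \ref{Ecoefideal} with the $E$-order of $P$ at the point under consideration, so no case distinction ever arises there. Your side remark about additivity is also sound: the implication $\Eord_{\xi}(f)\geq c\Rightarrow\Eord_{\xi}(a_{f,\alpha})\geq c-|\alpha|$ does require $z$ to be one of the coordinates cutting out the stratum $E^{0}_{\Lambda(\xi)}$, which is guaranteed in this setting because hypersurfaces of $E$-maximal contact are coordinate hypersurfaces (Theorem \ref{gen}); the paper leaves this implicit.

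The genuine gap is the deferred regime $\theta<c$: it cannot be verified, because both your intermediate bound and the statement itself fail there. The Taylor expansion only yields $\Eord_{\xi}(a_{f,\alpha})\geq\theta-|\alpha|$, and for $|\alpha|>0$ each of the inequalities $\theta-|\alpha|\geq\frac{(c-|\alpha|)\theta}{c}$ and $\frac{c}{c-|\alpha|}\left(\theta-|\alpha|\right)\geq\theta$ is equivalent to $\theta\geq c$, so the chain you wrote breaks down exactly when $\theta<c$. Moreover the failure is not an artifact of the method: take $P=\langle x_1x_2-x_3^{2}\rangle$ in $K[x_1,x_2,x_3]$, $V=\{x_1=0\}$, and the fixed critical value $c=2$, so that $\ECoeff_{V}(P)=\langle x_2^{2},x_3^{2}\rangle$; at the point $\xi=(0,1,0)\in V$ one has $\Eord_{\xi}(P)=1$ (only $x_1,x_3$ vanish at $\xi$, and $x_1x_2\notin\langle x_1,x_3\rangle^{2}$ locally since $x_2$ is a unit there), while $\Eord_{\xi}(\ECoeff_{V}(P))=0$ because $x_2^{2}$ is a unit at $\xi$. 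Hence inequality (\ref{desigual}) is simply false at points where the fixed $c$ exceeds $\Eord_{\xi}(P)$. The repair is not to prove more but to interpret less: read the ``suitable critical value'' as $c=\Eord_{\xi}(P)$ at the point $\xi$ being considered, as the paper's proof does; then your regime $\theta\geq c$ (in fact $\theta=c$) is the only one, and your argument is complete and coincides with the paper's.
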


\begin{proof} Let $\xi\in V$ be a point. Set $c=\Eord_{\xi}(P)$. This means $\Eord_{\xi}(f)\geq c$ for all $f\in P$, so that $\Eord_{\xi}(a_{f,\alpha})\geq c-|\alpha|$ for all $f\in P$. 

Then $$\Eord_{\xi}\left((a_{f,\alpha})^{\frac{c}{c-|\alpha|}}\right)\geq (c-|\alpha|)\cdot\left(\frac{c}{c-|\alpha|}\right)\ \text{ for all } f\in P,$$ 
therefore $$\Eord_{\xi}(\ECoeff_V(P))=\min_{\forall f\in P} \left(\Eord_{\xi}\left(\sum_{|\alpha|<c}(a_{f,\alpha})^{\frac{c}{c-|\alpha|}} \right)\right)\geq c.$$  
\end{proof}

\begin{remark} Let $P$ be a binomial ideal in $W$. Every hypersurface of $E$-maximal contact for $P$ at any point $a\in W$ is achieved by Theorem \ref{gen}. 

Under these conditions, if $\{x_1=0\}$ is a hypersurface of $E$-maximal contact for $P$ at $a$, then there exists a binomial equation $f(x,y)=y^{\gamma}x^{\alpha}-bx^{\beta} \in P$ with $\alpha_1>0$ such that $\Eord_{a}(f)=|\alpha|=\Eord_a(P)$, and therefore $\{x_1=0\}$ is a hypersurface of $E$-maximal contact for $f$. 
\end{remark}

\begin{lemma} Let $P$ be a binomial ideal in $W$. If $\{x_1=0\},\{x_2=0\}$ are hypersurfaces of $E$-maximal contact for $P$ in a neighborhood of $a\in W$, then,
\begin{enumerate}
  \item[a)] The hypersurface $\{x_2=0\}$ is a hypersurface of $E$-maximal contact for $\ECoeff_{\{x_1=0\}}(P)$. \\ Analogously, $\{x_1=0\}$ is a hypersurface of $E$-maximal contact for $\ECoeff_{\{x_2=0\}}(P)$.
	\item[b)] $\ECoeff_{\{x_2=0\}}\left(\ECoeff_{\{x_1=0\}}(P)\right)=\ECoeff_{\{x_1=0\}}\left(\ECoeff_{\{x_2=0\}}(P)\right)$.
\end{enumerate}
\end{lemma}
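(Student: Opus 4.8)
The plan is to exploit the fact that, because $P$ is binomial, each generator has at most two monomials with disjoint supports, so that the Taylor coefficients appearing in Definition~\ref{Ecoefideal} with respect to the distinct coordinate variables $x_1$ and $x_2$ are themselves monomials (times a unit $y^\gamma$). This single structural input is what makes both parts work: taking powers of monomial coefficients produces no cross terms, so expanding in $x_1$ and then $x_2$, or in the opposite order, reads off the same bidegree coefficients. First I would fix the critical value $c=\Eord_a(P)=\theta$ and, for each generator $f=y^{\gamma}x^{\alpha}-bx^{\beta}$, write its double Taylor expansion $f=\sum_{(i,j)}a_{f,(i,j)}\,x_1^{i}x_2^{j}$. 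Since $\alpha$ and $\beta$ are disjoint, the variable $x_1$ (resp.\ $x_2$) occurs in at most one of the two monomials, so at most two of the $a_{f,(i,j)}$ are nonzero and each is a monomial --- unless $x_1$ is absent from $f$, in which case $\ECoeff_{\{x_1=0\}}(f)=f$ is the whole binomial sitting in bidegree $0$ in $x_1$.

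For part (b) I would compute both iterated coefficient ideals explicitly on generators. Expanding first in $x_1$ produces the monomial $(a^{(1)}_{f,i})^{c/(c-i)}$ of $x_2$-degree $jc/(c-i)$, whose subsequent $\ECoeff_{\{x_2=0\}}$ raises the remaining monomial coefficient to the power $\tfrac{c}{c-i}\cdot\tfrac{c}{\,c-jc/(c-i)\,}=\tfrac{c}{c-i-j}$. The resulting exponent $c/(c-i-j)$ is symmetric in $i$ and $j$, so both orders yield $\sum_{f}\sum_{i+j<c}(a_{f,(i,j)})^{c/(c-i-j)}$, which proves $\ECoeff_{\{x_2=0\}}(\ECoeff_{\{x_1=0\}}(P))=\ECoeff_{\{x_1=0\}}(\ECoeff_{\{x_2=0\}}(P))$. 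The intermediate $x_2$-degree $jc/(c-i)$ and these exponents are rational, so I would phrase the whole computation with the integral normalization ${\BECoeff}$ from the remark after Definition~\ref{Ecoefideal}, the two pairs being equivalent.

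For part (a) I would produce inside $\ECoeff_{\{x_1=0\}}(P)$ an explicit generator witnessing $E$-maximal contact along $\{x_2=0\}$. By Theorem~\ref{gen} and the preceding remark, the hypothesis that $\{x_2=0\}$ is $E$-maximal contact for $P$ yields a binomial $f\in P$ with $\alpha_2>0$ and $\Eord_a(f)=|\alpha|=c$. Expanding $f$ in $x_1$ gives a generator $g$ of $\ECoeff_{\{x_1=0\}}(P)$ that is a coordinate monomial still divisible by $x_2$ (the bidegree-$(\alpha_1,\alpha_2)$ coefficient raised to its power when $\alpha_1>0$, or $f$ itself when $x_1\notin f$), of total $x$-degree $c$ and with $\Eord_a(g)=c$. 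Combined with the lower bound $\Eord_a(\ECoeff_{\{x_1=0\}}(P))\geq\Eord_a(P)=c$ of Proposition~\ref{Ecoeford}, this pins the value at $a$ to $c$; and since every monomial of $x$-degree $c$ has $E$-order at most $c$ everywhere, $\max\Eord(\ECoeff_{\{x_1=0\}}(P))=c$, attained at $a$. Then $\ESing(\ECoeff_{\{x_1=0\}}(P),c)\subseteq\ESing(g,c)=\bigcap_{i\in\mathrm{supp}}\{x_i=0\}\subseteq\{x_2=0\}$, and the persistence of this inclusion under the combinatorial blow-ups is exactly Corollary~\ref{esing}. Hence $\{x_2=0\}$ is $E$-maximal contact for $\ECoeff_{\{x_1=0\}}(P)$, and exchanging $x_1$ with $x_2$ gives the companion statement.

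The main obstacle I anticipate is the exponent bookkeeping in (b): one must check that the rational powers compose to the symmetric value $c/(c-i-j)$ and that the informal sums over rational $\alpha$ in Definition~\ref{Ecoefideal} are legitimate, which I would handle throughout via the integral ${\BECoeff}$ normalization and the equivalence of pairs. Everything else reduces to the structural observation that binomial generators have monomial Taylor coefficients, which keeps the two iterations term-by-term identical.
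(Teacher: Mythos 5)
Your proposal is correct and is essentially the paper's own argument: part (a) by exhibiting inside $\ECoeff_{\{x_1=0\}}(P)$ a generator of $E$-order $c$ at $a$ that is still divisible by $x_2$, so that Theorem \ref{gen} and Corollary \ref{esing} apply, and part (b) by a generator-by-generator computation whose core is the same exponent identity $\frac{c}{c-i}\cdot\frac{c}{c-jc/(c-i)}=\frac{c}{c-i-j}$ together with the observation that the conditions $i<c$ and $jc/(c-i)<c$ are jointly equivalent to the symmetric condition $i+j<c$. The only divergence is presentational: you compress the paper's explicit case enumeration into one double Taylor expansion in $(x_1,x_2)$, at the price of a structural premise (all Taylor coefficients are monomials) that silently excludes generators with common factors $x^{\nu}$ permitted by Definition \ref{jota} --- the situation the paper records through its $\alpha_1=\beta_1\neq 0$ entries in (b) and dismisses as an easy exercise in (a), and which your computation handles by the same identity once the common factor is carried along.
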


\begin{proof} \
\begin{itemize}
	\item[a)] Fix $c=\max\ \Eord(P)$. Let $a\in W$ be a point where $\Eord_{a}(P)=c$.	Let $f\in P$ be a binomial equation, without common factors, such that $\Eord_{a}(f)=c$ and $\{x_1=0\}$ is a hypersurface of $E$-maximal contact for $f$ at $a$. 

\item If $\{x_2=0\}$ is also a hypersurface of $E$-maximal contact for $f$ at $a$ then there are two cases: 
\begin{itemize}
	\item[-] If $f$ is of the form $f(x,y)=y^{\gamma}x_1^{\alpha_1}x_2^{\alpha_2}x^{\alpha*}-bx^{\beta}$, with $\alpha_1>0,\alpha_2>0$, $\alpha*=\alpha-(\alpha_1,\alpha_2,0\ldots,0)$ and $c=|\alpha|\geq\alpha_1+\alpha_2$. The coefficient ideal of $<f>$ along $E$ is,  $$\ECoeff_{\{x_1=0\}}(<f>)=<(y^{\gamma}x_2^{\alpha_2}x^{\alpha*})^{\frac{c}{c\text{-}\alpha_1}},bx^{\beta}>.$$ Since $|\beta|\geq |\alpha|=c$, $\Eord_a(\ECoeff_{\{x_1=0\}}(<f>))=|\alpha|=c$ and therefore $$\Eord_a(\ECoeff_{\{x_1=0\}}(P))=|\alpha|=c.$$ So that $\{x_2=0\}$ is a hypersurface of $E$-maximal contact for $\ECoeff_{\{x_1=0\}}(P)$ at $a$.
\item[-] If $f$ is of the form $f(x,y)=y^{\gamma}x_1^{\alpha_1}x^{\alpha*}-bx_2^{\beta_2}x^{\beta*}$, with $\alpha*=\alpha-(\alpha_1,0\ldots,0)$, $\beta*=\beta-(0,\beta_2,0\ldots,0)$, $\alpha_1>0,\beta_2>0$ and $c=|\alpha|=|\beta|$. The coefficient ideal of $<f>$ along $E$ is,   $$\ECoeff_{\{x_1=0\}}(<f>)=\left\{\begin{array}{ll}<(y^{\gamma}x^{\alpha*})^{\frac{c}{c\text{-}\alpha_1}},bx_2^{\beta_2}x^{\beta*}> & \text{if } \alpha_1<c \\ <bx_2^{\beta_2}x^{\beta*}> & \text{if } \alpha_1=c \end{array}\right.$$ Again $\Eord_a(\ECoeff_{\{x_1=0\}}(<f>))=|\alpha|=c$ so that $\{x_2=0\}$ is a hypersurface of $E$-maximal contact for $\ECoeff_{\{x_1=0\}}(P)$ at $a$.
\end{itemize}

\item If $\{x_2=0\}$ is not a hypersurface of $E$-maximal contact for $f$ at $a$, then there exists a binomial equation $g\in P$ such that $\Eord_{a}(g)=c$ and $\{x_2=0\}$ is a hypersurface of $E$-maximal contact for $g$ at $a$. This equation $g$ is of the form $g(x,y)=y^{\delta}x_2^{\eta_2}x^{\eta*}-bx^{\mu}$, without common factors, where $\eta*=\eta-(0,\eta_2,0\ldots,0)$, $\eta_1+\eta_2\leq|\eta|$ and $c=|\eta|\leq |\mu|$. 
The coefficient ideal of $<g>$ along $E$ is, $$\ECoeff_{\{x_1=0\}}(<g>)=\left\{\begin{array}{ll} <g> & \text{if } \eta_1=\mu_1=0 \\ <(y^{\delta}x_2^{\eta_2}x^{\eta^{**}})^{\frac{c}{c\text{-}\eta_1}},bx^{\mu}> & \text{if } \eta_1>0 \\ <y^{\delta}x_2^{\eta_2}x^{\eta*},(bx^{\mu*})^{\frac{c}{c\text{-}\mu_1}}> & \text{if } 0<\mu_1<c \\ <y^{\delta}x_2^{\eta_2}x^{\eta*}> & \text{if } \mu_1\geq c \end{array}\right.$$ where $\eta^{**}=\eta^{*}-(\eta_1,0\ldots,0)$, $\mu^{*}=\mu-(\mu_1,0,\ldots,0)$. 

So that $\Eord_a(\ECoeff_{\{x_1=0\}}(<g>))=|\eta|=c$ and therefore $\{x_2=0\}$ is a hypersurface of $E$-maximal contact for $\ECoeff_{\{x_1=0\}}(P)$ at $a$. 	

\item To check the assumption when $f$ has common factors is an easy exercise. 
	\item[b)] It is enough to check {\small $$ \ECoeff_{\{x_2=0\}}\!\left(\ECoeff_{\{x_1=0\}}(<\!f\!>)\right)\!=\!\ECoeff_{\{x_1=0\}}\!\left(\ECoeff_{\{x_2=0\}}(<\!f\!>)\right)\  \forall\ f\in P.$$}
In $(a)$ it is proved that the equality in equation (\ref{desigual}) holds if there exist at least two hypersurfaces of $E$-maximal contact for $P$ at $a\in W$. 
	
Let $f(x,y)=y^{\gamma}x^{\alpha}-bx^{\beta}$ be a binomial equation, $f\in P$, with $|\beta|\geq|\alpha|$, where $|\alpha|\geq c$. 

\item Assume $\alpha_1<c$. The coefficient ideal of $<f>$ along $E$ is given by, $$J_f^{1}\!=\!\ECoeff_{\{x_1=0\}}(<f>)\!=\!\left\{\begin{array}{ll} \hspace*{-0.2cm} <f> & {\scriptstyle \text{if } \alpha_1=\beta_1=0} \\ \hspace*{-0.2cm}
<(y^{\gamma}x^{\alpha^{*}}-bx^{\beta^{*}})^{\frac{c}{c\text{-}\alpha_1}}> & {\scriptstyle \text{if } \alpha_1=\beta_1\neq 0} \\ \hspace*{-0.2cm} <(y^{\gamma}x^{\alpha^{*}})^{\frac{c}{c\text{-}\alpha_1}}, (bx^{\beta^{*}})^{\frac{c}{c\text{-}\beta_1}}> & {\scriptstyle \text{if } 0<\beta_1<c,\ \beta_1\neq \alpha_1} \\ \hspace*{-0.2cm} <(y^{\gamma}x^{\alpha^{*}})^{\frac{c}{c\text{-}\alpha_1}}> & 
{\scriptstyle \text{if } \beta_1\geq c} \\ 
\end{array}\right.$$

where $\alpha^{*}=\alpha-(\alpha_1,0,\ldots,0)$, $\beta^{*}=\beta-(\beta_1,0,\ldots,0)$.  

Note that $\max\ \Eord(J_f^{1})\geq c$, but $\max\ \Eord(\ECoeff_{\{x_1=0\}}(P))=c$ because of $(a)$. \\ Notice that $\alpha_1<c$ and $\alpha_2\cdot\frac{c}{c-\alpha_1}<c$ are equivalent to $\alpha_1+\alpha_2<c$.

Since $\ \frac{c}{c-\alpha_1}\cdot \frac{c}{c-(\frac{\alpha_2\cdot
c}{c-\alpha_1})}=\frac{c}{c-(\alpha_1+\alpha_2)}$  and $\
\frac{c}{c-\beta_1}\cdot \frac{c}{c-(\frac{\beta_2\cdot
c}{c-\beta_1})}=\frac{c}{c-(\beta_1+\beta_2)}$,

{\small $$\ECoeff_{\{x_2=0\}}(J_f^{1})=\left\{\begin{array}{ll} \hspace*{-0.2cm} <f> & {\scriptstyle \text{if } \alpha_1=\beta_1=\alpha_2=\beta_2=0} \\ \hspace*{-0.2cm}
<(y^{\gamma}x^{\alpha'}-bx^{\beta'})^{\frac{c}{c-\alpha_2}}> & {\scriptstyle \text{if } \alpha_1=\beta_1=0, \alpha_2=\beta_2\neq 0} \\ \hspace*{-0.2cm} <(y^{\gamma}x^{\alpha^{**}}-bx^{\beta^{**}})^{\frac{c}{c-(\alpha_1+\alpha_2)}}> & {\scriptstyle \text{if } \alpha_1=\beta_1\neq 0,\alpha_2=\beta_2\neq 0} \\ \hspace*{-0.2cm}
<(y^{\gamma}x^{\alpha^{*}}-bx^{\beta^{*}})^{\frac{c}{c-\alpha_1}}> & {\scriptstyle \text{if } \alpha_1=\beta_1\neq 0,\alpha_2=\beta_2=0} \vspace*{0.1cm} \\ \hspace*{-0.2cm}
<(y^{\gamma}x^{\alpha^{**}})^{\frac{c}{c-(\alpha_1+\alpha_2)}}, (bx^{\beta^{**}})^{\frac{c}{c-(\beta_1+\beta_2)}}> & {\scriptstyle \text{if }} \begin{array}{l} {\scriptstyle
0<\beta_1+\beta_2<c,} \\ {\scriptstyle \beta_1\neq \alpha_1 \text{ or } \beta_2\neq \alpha_2} \end{array} \\ \hspace*{-0.2cm} <(y^{\gamma}x^{\alpha^{**}})^{\frac{c}{c-(\alpha_1+\alpha_2)}}> & {\scriptstyle \text{if } \beta_1\geq c \text{ or } \beta_2\geq c} \\ 
\end{array}\right.$$}

where $\alpha^{**}=\alpha-(\alpha_1,\alpha_2,0,\ldots,0)$, $\beta^{**}=\beta-(\beta_1,\beta_2,0,\ldots,0)$,  $\alpha^{'}=\alpha-(0,\alpha_2,0,\ldots,0)$ and $\beta^{'}=\beta-(0,\beta_2,0,\ldots,0)$.

Replacing $x_1$ by $x_2$, $\alpha_1$ by $\alpha_2$ and $\beta_1$ by $\beta_2$ in the expression of $J_f^{1}$ obtain $$J_f^{2}=\ECoeff_{\{x_2=0\}}(<f>).$$ It follows that $\ECoeff_{\{x_1=0\}}(J_f^{2})=\ECoeff_{\{x_2=0\}}(J_f^{1})$.  

\item If $\alpha_1\geq c$,  $$J_f^{1}=\ECoeff_{\{x_1=0\}}(<f>)=\left\{\begin{array}{ll} 
<(bx^{\beta^{*}})^{\frac{c}{c-\beta_1}}> & \text{if } \beta_1<c \\ 0 & \text{if } \beta_1\geq c \end{array}\right.$$ where $\beta^{*}=\beta-(\beta_1,0,\ldots,0)$. The exponent of $x_2$, $\beta_2\cdot\frac{c}{c-\beta_1}<c$ if and only if $\beta_1+\beta_2<c$. 

As above
$$\ECoeff_{\{x_2=0\}}(J_f^{1})=\left\{\begin{array}{ll} 
<(bx^{\beta^{**}})^{\frac{c}{c-(\beta_1+\beta_2)}}> & \text{if } \beta_1<c, \beta_1+\beta_2<c \\ 0 & \text{otherwise} \end{array}\right.$$ with $\beta^{**}=\beta-(\beta_1,\beta_2,0,\ldots,0)$. And $$\ECoeff_{\{x_1=0\}}(J_f^{2})=\left\{\begin{array}{ll} 
<(bx^{\beta^{**}})^{\frac{c}{c-(\beta_1+\beta_2)}}> & \text{if } \beta_2<c, \beta_1+\beta_2<c \\ 0 & \text{otherwise} \end{array}\right.$$ so that $\ECoeff_{\{x_1=0\}}(J_f^{2})=\ECoeff_{\{x_2=0\}}(J_f^{1})$.

\item Following the previous argument, compute $\ECoeff_{\{x_2=0\}}(J_f^{1})$ when $\alpha_1+\alpha_2\geq c$. In this case $\alpha_2\cdot\frac{c}{c-\alpha_1}\geq c$ and the coefficient ideals are the same as in the case $\alpha_1\geq c$.
\end{itemize} \vspace*{-0.5cm}
\end{proof}
	
\begin{proposition} \label{topes} Let $P$ be an ideal in $W$. Let $V=\{z=0\}\subset W$ be a hypersurface of $E$-maximal contact for $P$ at $\xi\in V$. Fix $c=\Eord_{\xi}(P)=\max\ \Eord(P)$. Then, $$\Etop(P)=\Etop(\ECoeff_V(P),c).$$  
\end{proposition}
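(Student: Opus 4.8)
The plan is to reduce the asserted set equality to the single pointwise equivalence, valid for $\xi\in V$,
$$\Eord_{\xi}(P)\geq c \iff \Eord_{\xi}(\ECoeff_V(P))\geq c,$$
and then to glue it to the maximal–contact inclusion $\Etop(P)\subseteq V$. First I would record the two elementary reductions. Since $c=\max\ \Eord(P)$, the definition of the top locus gives $\Etop(P)=\{\xi\mid \Eord_\xi(P)=c\}=\{\xi\mid \Eord_\xi(P)\geq c\}$, and because $V$ is a hypersurface of $E$-maximal contact this set is contained in $V$. On the other side, $\ECoeff_V(P)$ is an ideal on $V$, so $\Etop(\ECoeff_V(P),c)$ is by definition already a subset of $V$. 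Finally, by the remark following Corollary \ref{esing}, the maximal contact hypersurface is combinatorial, so we may take $z=x_{1}$ a coordinate belonging to $E$. Hence both sides live in $V$ and it suffices to prove the displayed equivalence at every $\xi\in V$.

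The technical heart of the argument is the additivity formula
$$\Eord_{\xi}(f)=\min_{\alpha}\bigl(|\alpha|+\Eord_{\xi}(a_{f,\alpha})\bigr),\qquad f=\sum_{\alpha}a_{f,\alpha}\,z^{\alpha},$$
at every point $\xi\in V$, where $\Eord_\xi(a_{f,\alpha})$ is computed on $V$. I would prove it directly from Definition \ref{Eord}: there $\Eord_\xi$ is the order with respect to the monomial prime $I(E^0_{\Lambda(\xi)})_\xi=\langle x_i : i\in\Lambda(\xi),\, i\leq s\rangle$, so it is the $S$-weighted monomial valuation assigning to a power series the least total degree, in the vanishing coordinates $S=\Lambda(\xi)\cap\{1,\ldots,s\}$, of a monomial occurring in it. Since $\xi\in V=\{x_1=0\}$ forces $1\in S$, and since the Taylor coefficients $a_{f,\alpha}$ do not involve $z=x_1$, each monomial of $f$ splits its $S$-degree as $|\alpha|$ (the $x_1$-part) plus the $(S\setminus\{1\})$-degree of a monomial of $a_{f,\alpha}$; taking minima gives the formula, and the $(S\setminus\{1\})$-order of $a_{f,\alpha}$ is precisely its $E$-order on $V$. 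There is no cancellation exactly because $\Eord$ is a monomial order.

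With this formula both implications are immediate. For $\Rightarrow$ I would simply invoke Proposition \ref{Ecoeford}, which already yields $\Eord_\xi(\ECoeff_V(P))\geq \Eord_\xi(P)\geq c$. For $\Leftarrow$, I use that the $E$-order of a generated ideal is the minimum of the $E$-orders of its generators: thus $\Eord_\xi(\ECoeff_V(P))\geq c$ holds iff $\tfrac{c}{c-|\alpha|}\Eord_\xi(a_{f,\alpha})\geq c$, i.e. $\Eord_\xi(a_{f,\alpha})\geq c-|\alpha|$, for all $f\in P$ and all $|\alpha|<c$. Feeding this into the additivity formula, every term with $|\alpha|<c$ contributes $|\alpha|+\Eord_\xi(a_{f,\alpha})\geq c$, while every term with $|\alpha|\geq c$ contributes at least $|\alpha|\geq c$; hence $\Eord_\xi(f)\geq c$ for all $f\in P$, so $\Eord_\xi(P)\geq c$. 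Combining the equivalence with the reductions of the first paragraph gives $\Etop(P)=\Etop(\ECoeff_V(P),c)$.

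I expect the only genuine obstacle to be the careful justification of the additivity formula, namely confirming that on the (coordinate) maximal–contact hypersurface the $E$-order behaves as a monomial valuation, so that the $x_1$-adic grading of the Taylor expansion does not interact with the order along the remaining coordinates. Everything else is bookkeeping with the definition of the coefficient ideal, the inequality of Proposition \ref{Ecoeford}, and the fact that $c$ is the maximal value of $\Eord(P)$.
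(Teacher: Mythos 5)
Your proof is correct and follows essentially the same route as the paper's: the inclusion $\Etop(P)\subseteq\Etop(\ECoeff_V(P),c)$ via Proposition \ref{Ecoeford}, and the reverse inclusion by translating $\Eord_{\xi}(\ECoeff_V(P))\geq c$ into the bounds $\Eord_{\xi}(a_{f,\alpha})\geq c-|\alpha|$ for $|\alpha|<c$ and back into $\Eord_{\xi}(f)\geq c$ for all $f\in P$, then using maximality of $c$. The only difference is presentational: where the paper justifies the key equivalence tersely (``because of $\ESing(f,c)\subset V$''), you make it explicit through the additivity formula $\Eord_{\xi}(f)=\min_{\alpha}\bigl(|\alpha|+\Eord_{\xi}(a_{f,\alpha})\bigr)$ for the monomial valuation $\Eord$ on a coordinate hypersurface, which is precisely the fact the paper is implicitly invoking.
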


\begin{proof} By proposition \ref{Ecoeford} $\Etop(P)\subset \Etop(\ECoeff_V(P),c)$.
	
Let $\eta \in V$ such that $\eta\in \Etop(\ECoeff_V(P),c)$. By definition, 	$$\ECoeff_V(P)=<\{(a_{f,\alpha})^{\frac{c}{c-|\alpha|}}\ /\ f\in P, |\alpha|<c\}>$$
where $V=\{z=0\}$ and  $f=\sum_{\alpha}a_{f,\alpha}z^{\alpha}$, $\forall\ f\in P$.
$$ \begin{array}{rl} \Eord_{\eta}(\ECoeff_V(P))\geq c  & \Leftrightarrow \Eord_{\eta}((a_{f,\alpha})^{\frac{c}{c-|\alpha|}})\geq c, \ \forall\ f\in P \text{ and } \ \forall\ |\alpha|<c \Leftrightarrow \\  & \Leftrightarrow \Eord_{\eta}(a_{f,\alpha})\geq c-|\alpha|, \ \forall\ f\in P \text{ and } \ \forall\ |\alpha|<c \Leftrightarrow \\  & \Leftrightarrow \Eord_{\eta}(f)\geq c, \ \forall\ f\in P \end{array}$$ because of $\ESing(f,c)\subset V$. 

Since $c$ is maximum, $\Eord_{\eta}(f)=c$ for all $f\in P$ and then $\eta\in \Etop(P)$.
\end{proof}

\begin{proposition} \label{conmutdebil} Let $P$ be an ideal in $W$. Let $V\subset W$ be a hypersurface of E\text{-}maximal contact for $P$ at $\xi\in V$. The coefficient ideal of $P$ along $E$ satisfies $$(\ECoeff_V(P))^{!}=\ECoeff_{V^{\curlyvee}}(P^{\curlyvee})$$ after a blow up $\pi$ along $Z\subset \Etop(P)$, at those points where $$c'=\Eord_{\xi'}(P^{\curlyvee})=\Eord_{\xi}(P)=c,$$ with $\pi(\xi')=\xi$, $P^{\curlyvee}$ is the weak transform of $P$ and the superscript $^{!}$ means controlled transform.
\end{proposition}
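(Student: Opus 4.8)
The plan is to reduce to a single binomial generator and then to a bookkeeping of exceptional powers in the monomial charts of the combinatorial blow up, showing that both sides of the asserted identity are obtained by removing the \emph{same} power of the exceptional divisor from the total transform. By the remark preceding the previous lemma, a hypersurface of $E$-maximal contact for $P$ may be taken to be a coordinate hypersurface $V=\{x_1=0\}$, and since the controlled transform is just division by a fixed power of $I(Y')$, it is additive over generators; hence it suffices to verify the identity for each binomial generator $f=y^{\gamma}x^{\alpha}-bx^{\beta}\in P$ with $\alpha_1>0$ and $\Eord_{\xi}(f)=|\alpha|=c$. Because $\{\alpha,\beta,\gamma\}$ are disjoint we have $\beta_1=0$, so the Taylor expansion of $f$ along $\{x_1=0\}$ has only the two coefficients $a_{f,\alpha_1}=y^{\gamma}x^{\alpha^{*}}$ and $a_{f,0}=-bx^{\beta}$, and $\ECoeff_V(<f>)$ is generated by their weighted powers exactly as computed in that lemma.

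The core of the argument is the following factorization. First I would record that, at the points under consideration, $\theta=\max\ \Eord(P)=c$, so the controlled transform coincides with division of the total transform by $I(Y')^{c}$; that is, $f^{*}=I(Y')^{c}\cdot f^{\curlyvee}$, and likewise $(\ECoeff_V(P))^{!}$ is division by $I(Y'\cap V^{\curlyvee})^{c}$ since $\max\ \Eord(\ECoeff_V(P))=c$ by Proposition \ref{topes}. Next, viewing each Taylor coefficient $a_{f,\alpha}$ as a monomial on $V$, its $E$-order along the combinatorial center $Z\subset V$ equals exactly $c-|\alpha|$ (not larger, precisely because $\Eord_{\xi}(f)=c$ is maximal and preserved, and because the $E$-order of a monomial is combinatorial and exact). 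Hence the induced blow up of $V$ along $Z$ gives $a_{f,\alpha}^{*}=I(Y'\cap V^{\curlyvee})^{c-|\alpha|}\cdot a_{f,\alpha}^{\curlyvee}$, and raising to the weight $\frac{c}{c-|\alpha|}$ produces the common factor $I(Y'\cap V^{\curlyvee})^{c}$. Thus the total transform of $\ECoeff_V(P)$ factors as $I(Y'\cap V^{\curlyvee})^{c}\cdot\ECoeff_{V^{\curlyvee}}(P^{\curlyvee})$, and removing $I(Y'\cap V^{\curlyvee})^{c}$ yields the claimed equality.

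The step I expect to be the main obstacle is verifying that forming the Taylor expansion commutes with the blow up, i.e. that the Taylor coefficients of $P^{\curlyvee}$ with respect to $V^{\curlyvee}=\{x_1'=0\}$ are exactly the weak transforms $a_{f,\alpha}^{\curlyvee}$, so that the exceptional factor splits off uniformly across all coefficients. This must be checked chart by chart: in a chart where the chart variable is a center variable distinct from $x_1$, the coordinate $x_1$ carrying $V$ is transformed to $x_1'$ times a unit, so $V^{\curlyvee}=\{x_1'=0\}$ is compatible with the division by $I(Y')$ and the splitting is uniform; in the chart where $x_1$ is itself the exceptional variable, $V^{\curlyvee}$ is empty and both sides are trivial. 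Since the equality is asserted only where $c'=c$, no coefficient order can drop and the factorization above is exact. Finally I would note that passing to the integer weights $c!$, as in the earlier remark, removes the rational exponents from the entire computation.
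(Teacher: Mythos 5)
Your overall strategy --- Taylor-coefficient bookkeeping in the monomial charts of the combinatorial blow up --- is the right one; it is in essence the Encinas--Hauser argument that the paper itself invokes (the paper's proof is literally the one-line citation ``word by word the same as in \cite{strong} rewritten for the $E$-order''). But the step you single out as the core, and the exactness claim feeding it, are false. You assert that every Taylor coefficient $a_{f,\alpha}$ has $E$-order along $Z$ exactly $c-|\alpha|$, and hence that the Taylor coefficients of $P^{\curlyvee}$ with respect to $V^{\curlyvee}$ are the weak transforms $a_{f,\alpha}^{\curlyvee}$. Nothing bounds the multiplicity of the degree-zero coefficient $a_{f,0}=-bx^{\beta}$ along $Z$ above by $c$. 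Concretely, take $f=x_1x_2-x_3^5$, so $c=|\alpha|=2$, $V=\{x_1=0\}$, $Z=\{x_1=x_2=x_3=0\}\subset\Etop(f)$; in the $x_3$-chart $f^{\curlyvee}=x_1'x_2'-x_3'^3$ and the $E$-order is preserved at the origin, so the hypotheses of the proposition hold. The degree-zero Taylor coefficient of $f^{\curlyvee}$ is $x_3'^3$, whereas $a_{f,0}=x_3^5$ has multiplicity $5\neq c-0$ along $Z$ and its weak transform is the unit ideal. Thus the uniform splitting $a_{f,\alpha}^{*}=I(Y'\cap V^{\curlyvee})^{c-|\alpha|}\cdot a_{f,\alpha}^{\curlyvee}$ fails, and following your identifications literally gives $(\ECoeff_V(P))^{!}=\langle x_2'^2,\,1\rangle$ instead of the correct $\langle x_2'^2,\,x_3'^3\rangle$, which is what both sides of the proposition actually equal in this example.

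The repair is to never take weak transforms of the coefficients. In the chart of a center variable $x_j$, $j\neq 1$, one has $x_1=x_1'x_j'$ (not $x_1'$ times a unit, as you wrote --- the extra exceptional factor is precisely what makes the bookkeeping work), so substituting into $f=\sum_{\alpha}a_{f,\alpha}x_1^{\alpha}$ gives $f^{*}=\sum_{\alpha}\bigl(a_{f,\alpha}^{*}\,x_j'^{\alpha}\bigr)x_1'^{\alpha}$. Since the order of vanishing of $P$ along $Z$ coincides with the $E$-order at the generic point of $Z$, which is $c$ because $Z\subset\Etop(P)$, we have $P^{\curlyvee}=I(Y')^{-c}P^{*}$, and therefore the degree-$\alpha$ Taylor coefficient of $I(Y')^{-c}f^{*}$ is $x_j'^{\alpha-c}a_{f,\alpha}^{*}$: a controlled-type transform that retains whatever excess exceptional factor the coefficient carries. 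Raising it to the weight $\frac{c}{c-\alpha}$ gives exactly $x_j'^{-c}\bigl((a_{f,\alpha})^{\frac{c}{c-\alpha}}\bigr)^{*}$, and summing over $\alpha<c$ and over $f$ yields $(\ECoeff_V(P))^{!}=\ECoeff_{V^{\curlyvee}}(P^{\curlyvee})$ with no exactness statement needed anywhere. Note that this computation is uniform in $f$, which also repairs your initial reduction: additivity of the controlled transform over generators obliges you to treat every $f\in P$, not only those with $\alpha_1>0$ and $\Eord_{\xi}(f)=c$ (the remark you cite only guarantees that one such $f$ exists), and the corrected bookkeeping handles all of them, including generators not involving $x_1$ or of higher $E$-order, at once.
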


\begin{proof} The proof is word by word the same as in \cite{strong} rewritten for the $E$-order.  
\end{proof}

\begin{definition} Let $P\neq 0$ be an ideal in a hypersurface $V=\{x_1=0\}\subset W=Spec(K[x,y]_y)$. The ideal $P$ is said to be \emph{bold regular along $E$} or \emph{E\text{-}bold regular} if 
 $P$ is of the form $P=<y^{\gamma}(1-\mu y^{\delta})x_1^{\alpha_1}>$ where $\mu \in K$, $\gamma,\delta\in \mathbb{Z}^n$, $\alpha_1\in \mathbb{N}$, $\alpha_1>0$.
\end{definition}

\begin{remark} The factor $y^{\gamma}(1-\mu y^{\delta})$ satisfies $$\Eord_{\xi}(y^{\gamma}(1-\mu y^{\delta}))=0 \text{ for all }\xi \in V\cap \{\eta \in W|\ y^{\gamma}(\eta)\neq 0, y^{\delta}(\eta)\neq 0\}.$$
\end{remark}
 
\begin{proposition} \label{boldreg} Let $P\neq 0$ be an ideal in $W$. Let $V\subset W$ be a hypersurface of $E$-maximal contact for $P$ in a neighborhood of $\xi\in V$, with $\Eord_{\xi}(P)=c$. 

Then $\ECoeff_V(P)=0$ if and only if $P$ is bold regular or $1$.
\end{proposition}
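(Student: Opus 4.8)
The plan is to reduce everything to a single divisibility statement. By Theorem \ref{gen} (and the remark after Corollary \ref{esing} that hypersurfaces of $E$-maximal contact are coordinate hypersurfaces) I may assume $V=\{x_1=0\}$ and take $z=x_1$ as the equation used in the Taylor expansion of Definition \ref{Ecoefideal}. Writing each $f\in P$ as $f=\sum_{\alpha}a_{f,\alpha}x_1^{\alpha}$ with $a_{f,\alpha}$ free of $x_1$, the first step is to record the equivalence
$$\ECoeff_V(P)=0\ \Longleftrightarrow\ a_{f,\alpha}=0\ \text{ for all } f\in P \text{ and } \alpha<c\ \Longleftrightarrow\ P\subseteq\langle x_1^{c}\rangle.$$
This is immediate from $\ECoeff_V(P)=\sum_{f\in P,\ \alpha<c}(a_{f,\alpha})^{\frac{c}{c-\alpha}}$: since $K[x,y]_y$ is a domain, each summand is the zero ideal exactly when $a_{f,\alpha}=0$, and the vanishing of $a_{f,\alpha}$ for every $\alpha<c$ says precisely that $x_1^{c}$ divides $f$.

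For the ``if'' implication, suppose first that $P$ is bold regular, $P=\langle y^{\gamma}(1-\mu y^{\delta})x_1^{\alpha_1}\rangle$ with $\alpha_1>0$. Since the factor $y^{\gamma}(1-\mu y^{\delta})$ involves no $x$-variable, it has $E$-order $0$ along $E$, so $c=\Eord_{\xi}(P)=\alpha_1$; as this factor is free of $x_1$, the generator lies in $\langle x_1^{\alpha_1}\rangle=\langle x_1^{c}\rangle$ and hence so does every element of $P$. By the equivalence above, $\ECoeff_V(P)=0$. If $P=1$ (or, in the sense of the remark following the companion ideal, $P\equiv 1$, i.e.\ $\max\Eord(P)=0$) then $c=0$, the defining sum ranges over the empty set $\alpha<0$, and $\ECoeff_V(P)=0$ trivially.

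For the converse I would use maximal contact to produce a distinguished generator. Assume $c>0$ (this is automatic once $V$ is a genuine hypersurface of $E$-maximal contact, since $c=0$ would force $\ESing(P,c)=W\subseteq V$; the remaining case $c=0$ falls under $P\equiv 1$). By the remark preceding the previous lemma there is a binomial $f=y^{\gamma}x^{\alpha}-bx^{\beta}\in P$ with $\alpha_1>0$ and $\Eord_{\xi}(f)=|\alpha|=c$. Disjointness of $\{\alpha,\beta\}$ forces $\beta_1=0$, so the $x_1$-degree-zero coefficient of $f$ is exactly $a_{f,0}=-bx^{\beta}$. Since $\ECoeff_V(P)=0$ contains $(a_{f,0})^{\frac{c}{c}}=\langle bx^{\beta}\rangle$, we get $b=0$, whence $f=y^{\gamma}x^{\alpha}$; the equivalence of the first paragraph then gives $x_1^{c}\mid f$, i.e.\ $\alpha_1\geq c=|\alpha|\geq\alpha_1$, forcing $\alpha=c\,e_1$ and $f=y^{\gamma}x_1^{c}$. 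As $y^{\gamma}$ is a unit in $K[x,y]_y$, this yields $x_1^{c}\in P$, that is $\langle x_1^{c}\rangle\subseteq P$; combined with $P\subseteq\langle x_1^{c}\rangle$ from the equivalence, $P=\langle x_1^{c}\rangle$, which is bold regular.

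The routine point is the first equivalence; the only real subtlety in the converse is the bookkeeping — confirming that maximal contact genuinely supplies a binomial attaining the maximal $E$-order with $\alpha_1>0$, and that disjointness isolates the $x_1$-free term whose non-vanishing would obstruct $\ECoeff_V(P)=0$. I expect the most delicate case to be the degenerate $c=0$ situation, where ``$P=1$'' must be read in the sense of the earlier remark (maximal $E$-order zero) rather than literally as the unit ideal.
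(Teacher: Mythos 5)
Your first equivalence ($\ECoeff_V(P)=0$ iff $a_{f,\alpha}=0$ for all $f\in P$ and all $|\alpha|<c$, iff $P\subseteq\langle x_1^{c}\rangle$) and your ``if'' direction are correct and coincide with the paper's argument. The converse, however, contains a genuine gap, and the clearest symptom is that it proves too much: you conclude $P=\langle x_1^{c}\rangle$ on the nose, whereas a bold regular ideal $P=\langle y^{\gamma}(1-\mu y^{\delta})x_1^{c}\rangle$ with $\mu\neq 0$ also satisfies $\ECoeff_V(P)=0$ (your own ``if'' direction establishes this), and yet it is a different ideal from $\langle x_1^{c}\rangle$, because $1-\mu y^{\delta}$ is not a unit of $K[x,y]_y$. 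So the two halves of your proposal contradict each other, and the error must sit in the converse.

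Concretely, the faulty step is invoking the remark to get $f=y^{\gamma}x^{\alpha}-bx^{\beta}\in P$ with $\alpha_1>0$, $\Eord_{\xi}(f)=|\alpha|=c$, and then using disjointness of $\{\alpha,\beta\}$ to force $\beta_1=0$. For $P=\langle y^{\gamma}(1-\mu y^{\delta})x_1^{c}\rangle$ with $\mu\neq 0$ no such disjoint binomial of maximal $E$-order exists in $P$: every element of $P$ is divisible by $x_1^{c}$, and a nonzero two-term element $y^{\gamma'}x^{\alpha}-bx^{\beta}$ divisible by $x_1^{c}$ must have $\alpha_1\geq c$ \emph{and} $\beta_1\geq c$ (if the smaller of $\alpha_1,\beta_1$ were below $c$, the corresponding $x_1$-coefficient would have to vanish, forcing the two terms to coincide and $f$ to be zero). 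Thus the available binomials, such as $y^{-\delta}x_1^{c}-\mu x_1^{c}$, carry the common factor $x_1^{c}$, their coefficient $a_{f,0}$ vanishes identically, and no conclusion $b=0$ can be drawn; this ``monomial times hyperbolic'' configuration is exactly the case your argument loses. The paper's proof avoids the issue by arguing on the ideal rather than on a distinguished disjoint binomial: from $P\subseteq\langle x_1^{c}\rangle$ together with $\Eord_{\xi}(P)=c$ it writes $P=\langle\mathcal{U}\cdot x_1^{c}\rangle$ with $\Eord_{\xi}(\mathcal{U})=0$, and then binomiality of $P$ identifies $\mathcal{U}=y^{\gamma}(1-\mu y^{\delta})$, which retains precisely the hyperbolic factor your deduction discards. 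To repair your argument you would need to allow the $f$ supplied by maximal contact to have common factors, and then split into the cases where the cofactor of $x_1^{c}$ is a unit times a monomial or is hyperbolic; together these recover the bold regular conclusion.
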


\begin{proof} \ 
\begin{itemize}
	\item If the ideal $P$ is bold regular, $P=<y^{\gamma}(1-\mu y^{\delta})x_1^{\alpha_1}>$. Then $\Etop(P)=V=\{x_1=0\}$ and $\ECoeff_{V}(P)=0$. If $P=1$ then there no exists any hypersurface $V$ of maximal contact for $P$, $\ECoeff_{V}(P)=0$ by definition. 

\item If $V=\{x_1=0\}$, $\ECoeff_{V}(P)=0$ and $P\neq 1$ then the coefficients $a_{f,\alpha}=0$ for all $f\in P$ where $f=\sum_{\{\alpha/|\alpha|<c\}}a_{f,\alpha}x_1^{\alpha}$. So $P\subset <x_1^{c}>$. 

Since $\Eord_{\xi}(P)=c$ at $\xi\in V$, then $P=<\mathcal{U}\cdot x_1^{c}>$ where $\Eord_{\xi}(\mathcal{U})=0$. Therefore $\mathcal{U}$ is of the form $\mathcal{U}=y^{\gamma}(1-\mu y^{\delta})$ with $y^{\gamma}(\xi)\neq 0$, $y^{\delta}(\xi)\neq 0$, $\mu \in K$, $\gamma,\delta\in \mathbb{Z}^n$.
\end{itemize} \vspace*{-0.5cm}
\end{proof}

\begin{definition} Let $P_i$ be an ideal in $W_i$. Let $V\subset W_i$ be a hypersurface of $E$-maximal contact for $P_i$ at $\xi \in V$. Denote $\Eord_{\xi}(P_i)=c_i$. The {\it junior ideal} of $P_i$ in $V$ is the ideal
$$J_{i-1}= \left\{
\begin{array}{ll} \ECoeff_V(P_i) & \text{ if } \ECoeff_V(P_i)\neq 0 \\ 1 & \text{ if } \ECoeff_V(P_i)=0 \end{array} \right. $$ where $\ECoeff_{V}(P_i)$ is the coefficient ideal of $P_i$ along $E$ in $V$. 
\end{definition}

\begin{remark} Let $P$ be a binomial ideal in $W$. Let $\xi \in W$ be a point. Let $V\subset W$ be a hypersurface of $E$-maximal contact for $P$ at $\xi \in V$. By construction, the junior ideal of $P$ in $V$ can be expressed in terms of binomials. This means it is locally generated by binomials or their powers.  
\end{remark}

\begin{remark} Let $Z$ be the next center to be blown up, defined by this inductive procedure: choice in $W_i$ of a hypersurface of $E$-maximal contact for $P_i$ by means of theorem \ref{gen} and computation of the junior ideal $J_{i-1}$ inside this hypersurface. Then, set $$Z=\cap_{i\in \mathcal{I}}\{x_i=0\} \text{ with } \mathcal{I}\subseteq \{1,\ldots,n\}.$$
\end{remark}
 
\subsection{Definition and properties of the $E$-resolution function} \label{defprop}

\begin{definition} \label{Einv}
Let $(W,(J,c),H,E)$ be a binomial basic object along $E$. For all point $\xi\in \ESing(J,c)$ the $E$-resolution function $t$ will have $n$ components with lexicographical order, and it will be of one of the following types: $$\begin{array}{ll} (a) &
t(\xi)\!=\!(t_n(\xi),t_{n-1}(\xi),\ldots,t_{n-r}(\xi),\ \infty,\
\infty,\ldots,\infty)
\\ (b) & t(\xi)\!=\!(t_n(\xi),t_{n-1}(\xi),\ldots,t_{n-r}(\xi),
\Gamma(\xi),\infty,\ldots,\infty)
\\ (c) & t(\xi)\!=\!(t_n(\xi),t_{n-1}(\xi),\ldots,t_{n-r}(\xi),\ldots
\ldots \ldots,t_1(\xi)) \end{array}\ \text{ with } t_i(\xi)\!=\!\frac{\theta_i}{c_{i+1}} \text{ if } \theta_i\!>\!0 $$ where $\theta_i=\Eord_{\xi}(I_i)$ and $c_{i+1}=\max\ \Eord(P_{i+1})$ is the critical value in dimension $i$.

In the case $J_i=1$, define $t_i(\xi)=\infty$ and complete the $E$-resolution function $t$ with so many $\infty$ com\-ponents as needed in order to have always the same number of components, that is, $(t_{i-1}(\xi),\ldots,t_1(\xi))=(\infty,\ldots,\infty)$. 

If $\theta_i=0$ then $t_i(\xi)=\Gamma(\xi)$, where $\Gamma$ is the resolution function corresponding to the \emph{monomial case}, see \cite{course}. And complete the $E$-resolution function $(t_{i-1}(\xi),\ldots,t_1(\xi))=(\infty\ldots,\infty)$.
\end{definition}

\begin{remark}
To compute the maximal value of the $E$-resolution function $t$, it is enough to look at the points of maximal $E$-order, at each dimension $i$, where $\Etop(P_i)\subset \Etop(I_i)$. \\ Note that in this case $c_{i+1}=\theta_{i+1}$. 
\end{remark}

\begin{remark} \label{localinv}
Let $(W,(J,c),H,E)$ be a binomial basic object along $E$. By construction, the value of the function $t$ at a point $\xi$ of the $E$-singular locus $\ESing(J,c)$ only depends on the point $\xi$. 

Notice that the value of the function $t$ at any point does not depend on the Gr$\ddot{\rm o}$bner basis of the ideal $J$ fixed at the beginning of the $E$-resolution process. This is because the $E$-order of an ideal $I_i$ is independent of the selected set of generators of $I_i$.
\end{remark}

\begin{remark} Let $v_i$ be the number of invertible variables in dimension $i$ at the point $\xi$. 
Note that $v_i$ provides a lower bound $\sum_{i=1}^n v_i$ for the dimension of the next center of blowing up.
\end{remark}

\begin{lemma} Let $J\subset \mathcal{O}_W$ be a binomial ideal as in definition \ref{jota}. Let $\xi\in W$ be a point. Let $\mathcal{I}$ be a totally ordered set with the lexicographical order. The function $$\begin{array}{rl} t: & W \rightarrow (\mathcal{I},\leq) \\ & \ \xi \ \rightarrow t(\xi) \end{array}$$ is upper semi-continuous.
\end{lemma}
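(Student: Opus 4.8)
The plan is to prove upper semi-continuity of $t$ by reducing it, component by component, to the upper semi-continuity results already established in the excerpt, most notably Proposition \ref{usc} for the $E$-order function. Recall that a function into a totally ordered set with finite image is upper semi-continuous precisely when each super-level set is closed. Since $t$ takes values in $(\mathcal{I},\leq)$ with the lexicographical order, I would first argue that the image of $t$ is finite: each component $t_i(\xi)=\frac{\theta_i}{c_{i+1}}$ takes only finitely many rational values (because $\Eord(I_i)$ has finite image by Proposition \ref{usc}, and the critical values $c_{i+1}$ range over a finite set), the value $\Gamma(\xi)$ lies in the finite image of the monomial resolution function, and $\infty$ is a single extra symbol; hence $t(W)$ is a finite subset of $\mathcal{I}$.

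Next I would establish the key structural fact: the construction of $t$ proceeds by descending induction on the dimension $i$, passing from $(W_i,(J_i,c_{i+1}),H_i,E_i)$ to $(W_{i-1},(J_{i-1},c_i),H_{i-1},E_{i-1})$ through the companion ideal $P_i$, a hypersurface $V$ of $E$-maximal contact furnished by Theorem \ref{gen}, and the junior ideal $J_{i-1}=\ECoeff_V(P_i)$. The crucial point is that each top component $t_n$ depends only on the $E$-order of $J$ on $W=W_n$, which is upper semi-continuous by Proposition \ref{usc}; and Proposition \ref{topes} guarantees that $\Etop(P_i)=\Etop(\ECoeff_V(P_i),c_i)$, so that passing to the junior ideal in the next lower dimension does not lose the locus where the higher-dimensional components are maximal. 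This compatibility is exactly what makes the inductive descent well-defined and lets the closedness propagate.

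For the super-level sets, I would proceed by induction on $i$ using the lexicographic structure. A point satisfies $t(\xi)\geq (v_n,\ldots,v_1)$ lexicographically if and only if either $t_n(\xi)>v_n$, or $t_n(\xi)=v_n$ and the truncated tuple $(t_{n-1}(\xi),\ldots,t_1(\xi))$ is lexicographically at least $(v_{n-1},\ldots,v_1)$. The set where $t_n(\xi)\geq v_n$ is closed because $t_n$ is a ratio $\frac{\theta_n}{c_{n+1}}$ of $E$-orders and each level set $\{\Eord_\xi(I_n)\geq m\}$ is closed by Proposition \ref{usc} (equivalently, by the closedness of $\ESing$). Restricting to the stratum where $t_n$ is constant and equal to $v_n$, the remaining components are computed from the junior ideal $J_{n-1}$ on $W_{n-1}$; on this locus the lower-dimensional tuple is governed by the $E$-resolution function of the binomial basic object in dimension $n-1$, which by the inductive hypothesis is upper semi-continuous, and hence its super-level sets are closed in $W_{n-1}$, therefore closed in $W$. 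Handling the entries equal to $\infty$ (the case $J_i=1$) and equal to $\Gamma(\xi)$ (the monomial case, $\theta_i=0$) separately, I would use that $\{J_i=1\}$ is an open condition so $t_i=\infty$ contributes a closed super-level set, and that $\Gamma$ is itself upper semi-continuous in the monomial case as recorded in \cite{course}.

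The main obstacle I anticipate is ensuring that the inductive step is genuinely local and stratumwise consistent: the hypersurface $V$ of $E$-maximal contact and the attendant companion and junior ideals are defined only in a neighborhood of each point and on the stratum where the higher components are maximal, so I must verify that the locally defined lower components glue to a well-defined upper semi-continuous function on the relevant locally closed set, and that closedness in these strata upgrades to closedness in $W$. The commutativity and stability results — Proposition \ref{conmutdebil} (compatibility of $\ECoeff_V$ with controlled transform and weak transform) together with Proposition \ref{topes} — are precisely the tools that make the components behave coherently across the induction, and the fact that the $E$-order is independent of the chosen Gröbner basis (Remark \ref{localinv}) guarantees the components, and hence $t$, are intrinsically defined, so no gluing ambiguity arises.
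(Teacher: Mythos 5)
Your proposal is correct and takes essentially the same approach as the paper's proof: both reduce the statement to Proposition \ref{usc} (upper semi-continuity of the $E$-order, hence of each component $t_i$) together with the known semi-continuity of $\Gamma$ from \cite{course}, and then pass to the lexicographically ordered tuple by induction (the paper inducts on the number of components, you on the ambient dimension, which is only a difference of bookkeeping). The one slip is your claim that $\{J_i=1\}$ being \emph{open} makes $\{t_i=\infty\}$ a closed super-level set: openness points the wrong way for upper semi-continuity, and what is actually needed --- a point the paper's own proof also leaves implicit --- is that inside the stratum where $t_n,\ldots,t_{i+1}$ are constant, the bold-regular locus $\{J_i=1\}$ is closed, a fact which follows from the binomial structure of $P_{i+1}$ (and the paper's convention of inverting the coordinates that do not vanish at the point in question) rather than from openness.
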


\begin{proof} \ 
\begin{enumerate}
 \item[-] Since the function $\Gamma$ is upper semi-continuous (see \cite{course}), its extension to the $E$-singular locus of the pair $(J,c)$ $$\begin{array}{rccl} \Gamma(J): & \ESing(J,c) & \rightarrow & (\mathbb{Z}\times\mathbb{Q}\times \mathbb{Z}^n,\leq) \\ & \ \xi & \rightarrow & \Gamma(J)(\xi):=(-\Gamma_1(\xi),\Gamma_2(\xi),\Gamma_3(\xi)) \end{array}$$ is also an upper semi-continuous function. Note that if the ideal $J$ is a monomial ideal, then $\Eord_{\xi}(J)=\ord_{\xi}(J)$ for all $\xi\in W$ and therefore $\ESing(J,c)=\Sing(J,c)$. 
 
By proposition \ref{usc} the $E$-order is an upper semi-continuous function, therefore each $t_i$ $$\begin{array}{rl} t_i: & W \rightarrow (\mathbb{Q},\leq) \\ & \xi \ \rightarrow t_i(\xi) \end{array}$$ is an upper semi-continuous function for all $1\leq i \leq n$.
 \item[-] By induction, each tuple $(t_n),(t_n,t_{n-1}),(t_n,t_{n-1},t_{n-2}),\ldots,(t_n,\ldots,t_j)$ is an upper semi-continuous function, since every component $t_i$ it is. Hence $t=(t_n,\ldots,t_1)$ is an upper semi-continuous function.
\end{enumerate} \vspace*{-0.4cm}
\end{proof}

\begin{corollary} As a consequence, $$\EMaxB(t)=\{\xi\in \ESing(J,c)|\ t(\xi)=\max\ t\}$$ is a closed set. In fact, it is the next center to be blown up. 
\end{corollary}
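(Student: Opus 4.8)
The plan is to prove that $\EMaxB(t)$ is closed as an immediate consequence of the preceding lemma, which established that the $E$-resolution function $t\colon W \rightarrow (\mathcal{I},\leq)$ is upper semi-continuous. The general principle is that the top locus of any upper semi-continuous function on a noetherian topological space is closed, so the whole task reduces to invoking the definition of upper semi-continuity in the lexicographically ordered setting and checking that $\max\ t$ is actually attained.

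First I would recall that, by Definition \ref{semsup}, upper semi-continuity of $t$ guarantees two things: the image $\Img(t)=\{a_1,\ldots,a_s\}$ is a \emph{finite} subset of $(\mathcal{I},\leq)$, and each fiber-type set $\mathcal{F}_{a_i}=\{\xi\in W\ |\ t(\xi)\geq a_i\}$ is closed. Because the image is finite and $\mathcal{I}$ is totally ordered, the maximum value $a_s=\max\ t$ exists and is well defined. Then I would observe that
\begin{equation*}
\EMaxB(t)=\{\xi\in \ESing(J,c)\ |\ t(\xi)=\max\ t\}=\mathcal{F}_{a_s}\cap \ESing(J,c),
\end{equation*}
where the last equality holds because $t(\xi)\geq a_s$ forces $t(\xi)=a_s$ (there being no larger value in the image). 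Since $\mathcal{F}_{a_s}$ is closed by upper semi-continuity and $\ESing(J,c)$ is closed by the earlier corollary to Proposition \ref{usc}, their intersection $\EMaxB(t)$ is closed.

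The only genuinely delicate point is that $t$ is defined only on $\ESing(J,c)$, whereas the lemma phrases upper semi-continuity for $t\colon W\rightarrow(\mathcal{I},\leq)$; I would therefore make explicit that one works with the restriction of $t$ to the noetherian closed subspace $\ESing(J,c)$, on which the finiteness of the image and the closedness of the sublevel sets are inherited. I expect this bookkeeping about the domain to be the main (and essentially only) obstacle, since the substantive content—finiteness of $\Img(t)$ and closedness of the $\mathcal{F}_{a_i}$—has already been secured by the preceding lemma. The identification of $\EMaxB(t)$ with the next center to be blown up is not part of what must be verified here; it is a remark recording that the combinatorial nature of the construction (Theorem \ref{gen} and its corollaries) makes this closed locus a valid combinatorial center, and I would simply note this without further proof.
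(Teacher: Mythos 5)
Your proposal is correct and matches the paper's (implicit) argument: the paper states this corollary without proof precisely because, once the preceding lemma establishes upper semi-continuity of $t$, the closedness of $\EMaxB(t)$ follows directly from the definition of upper semi-continuity (finite image, so $\max t$ is attained, and the level set $\mathcal{F}_{\max t}$ is closed) intersected with the closed set $\ESing(J,c)$. Your additional care about restricting $t$ to the noetherian subspace $\ESing(J,c)$, and your observation that the ``next center'' clause is a remark rather than something to prove, are both consistent with how the paper treats this statement.
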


\begin{remark}
Moreover, by construction $\EMaxB(t)=Z=\cap_{i\in \mathcal{I}}\{x_i=0\}$ with $\mathcal{I}\subseteq \{1,\ldots,n\}$.
\end{remark}

\subsection{Hypersurfaces of $E$-maximal contact} \label{Ehipmaxc} 

\begin{notation}
We denote as {\it $i$-th chart} the chart where we divide by $x_i$. For example, if the center of the next blow up is the point $\xi=(0,\ldots,0,1,\ldots,1)$ where $\xi_k=0$ for $k=1,\ldots,s$ and $\xi_l=1$ for $l=s+1,\ldots,n$, this transformation is expressed: 
$$\begin{array}{ccc} K[x_1,\ldots,x_s,y_1,\ldots,y_{n-s}]_y & \rightarrow & k[x_1,\ldots,x_s,y_1,\ldots,y_{n-s},\frac{x_1}{x_i},\ldots,\frac{x_{i-1}}{x_i},\frac{x_{i+1}}{x_i},\ldots,\frac{x_s}{x_i}]_y \\ x_i & \rightarrow & \hspace*{-2cm} x_i \\ x_j & \rightarrow & \hspace*{1.3cm} \frac{x_j}{x_i} \text{ for } j\neq i, j=1,\ldots,s \\ y_j & \rightarrow & \hspace*{2cm} y_j \text{ for } j\neq i, j=1,\ldots,n-s \\
\end{array} $$ For simplicity, we will denote each $\frac{x_j}{x_i}$ as $x'_j$. 
\end{notation}

\begin{proposition} \label{dropblup} Let $P\subset \mathcal{O}_W$ be a binomial ideal as in definition \ref{jota}. Let $\xi\in W$ be a point where $\Eord_{\xi}(P)=\theta>0$ is maximal. Let $Z=\cap_{i\in \mathcal{I}}\{x_i=0\}\subset \Etop(P)$ be the next center to be blown up, with  $\mathcal{I}\subseteq \{1,\ldots,n\}$. Let $W \stackrel{\pi}{\leftarrow} W'$ be the blow up along $Z$, where $W'$ is the strict transform of $W$. 

The total transform of $P$ in $W'$ satisfies $P\mathcal{O}_{W'}=I(Y')^{\theta}\cdot P^{\curlyvee}$ where $P^{\curlyvee}$ is the weak transform of $P$, and $Y'$ is the exceptional divisor. Then $$\Eord_{\xi}(P)\geq \Eord_{\xi'}(P^{\curlyvee})$$ for all $\xi'\in W'$ with $\pi(\xi')=\xi$.  
\end{proposition}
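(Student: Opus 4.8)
The plan is to reduce the statement to a single binomial generator and then compute in one chart of the blow-up. First note that if $\xi\notin Z$ the morphism $\pi$ is a local isomorphism near $\xi$, so $P^{\curlyvee}$ agrees with $P$ there and the inequality is an equality; hence I may assume $\xi\in Z$, so that $\xi_i=0$ for every $i\in\mathcal{I}$. Since $\Eord_{\xi}(P)=\min_{f}\Eord_{\xi}(f)$ over a system of generators and $\theta>0$, while hyperbolic generators have $E$-order $0$, the minimum is attained by a non-hyperbolic generator $f(x,y)=y^{\gamma}x^{\alpha}-bx^{\beta}$ with $\Eord_{\xi}(f)=\theta$. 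Because the $E$-order of an ideal is bounded above by the $E$-order of any of its elements, and because $\ord_Z(f)=\theta$ forces $\pi^{*}f$ to be divisible by $I(Y')^{\theta}$ so that $f^{\curlyvee}\in P^{\curlyvee}$, it suffices to prove $\Eord_{\xi'}(f^{\curlyvee})\leq\theta$ for every $\xi'$ with $\pi(\xi')=\xi$.

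The key preliminary identification is $\theta=\ord_{Z}(f)=\min(A,B)$, where $A=\sum_{i\in\mathcal{I}}\alpha_i$ and $B=\sum_{i\in\mathcal{I}}\beta_i$. Indeed, at the generic point of $Z$ exactly the variables $\{x_i : i\in\mathcal{I}\}$ vanish, so this generic $E$-order equals $\min(A,B)$; since $Z\subset\Etop(P)$ with $\theta=\max\Eord(P)$, upper semi-continuity (Proposition \ref{usc}) forces that generic value to equal $\theta$. Moreover, writing $|\alpha|_{\xi}=\sum_{i:\,\xi_i=0}\alpha_i\geq A$ and likewise $|\beta|_{\xi}\geq B$, the equality $\min(|\alpha|_{\xi},|\beta|_{\xi})=\Eord_{\xi}(f)=\theta=\min(A,B)$ forces the vanishing $\alpha$-support (if $A\leq B$) or $\beta$-support (if $A>B$) of $\xi$ to coincide with its part inside $\mathcal{I}$. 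In other words, every index $k\notin\mathcal{I}$ in the relevant support satisfies $\xi_k\neq0$; since the blow-up along the combinatorial center $Z$ leaves each coordinate $x_k$ with $k\notin\mathcal{I}$ untouched and $\pi(\xi')=\xi$, these coordinates also satisfy $x_k(\xi')\neq0$. This is the fact that makes the argument work.

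Next I would pass to the chart dividing by $x_{i_0}$ for some $i_0\in\mathcal{I}$, where $x_j=x_{i_0}x_j'$ for $j\in\mathcal{I}\setminus\{i_0\}$, the remaining coordinates are unchanged, and $Y'=\{x_{i_0}=0\}$. Pulling back and factoring out $x_{i_0}^{\theta}$ gives
$$f^{\curlyvee}=y^{\gamma}x_{i_0}^{A-\theta}m_{\alpha}-b\,x_{i_0}^{B-\theta}m_{\beta},\qquad m_{\alpha}=\prod_{j\in\mathcal{I}\setminus\{i_0\}}(x_j')^{\alpha_j}\prod_{k\notin\mathcal{I}}x_k^{\alpha_k},$$
with $m_{\beta}$ defined analogously and $\min(A-\theta,B-\theta)=0$. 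Assume $A\leq B$ (the case $A>B$ is symmetric, using the second monomial). Then the first monomial carries no power of the exceptional variable $x_{i_0}$, and its $E$-weight at $\xi'$ counts only the $E'$-variables vanishing there: the strict transforms $\{x_j'=0\}$ with $j\in\mathcal{I}\setminus\{i_0\}$ and the untouched $\{x_k=0\}$ with $k\notin\mathcal{I}$. By the structural fact above the latter contribute nothing, so this $E$-weight is at most $\sum_{j\in\mathcal{I}\setminus\{i_0\}}\alpha_j\leq A=\theta$. As $\Eord_{\xi'}(f^{\curlyvee})$ is bounded by the $E$-weight of either monomial, I conclude $\Eord_{\xi'}(f^{\curlyvee})\leq\theta$. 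The monomial case $b=0$ (so $f=x^{\alpha}$) is identical with a single monomial, and hyperbolic generators, having $E$-order $0$, only decrease $\Eord_{\xi'}(P^{\curlyvee})$.

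The main obstacle is precisely the step isolated in the second paragraph: a naive bound on the $E$-weight of the transformed monomial, allowing every variable in its support to vanish at $\xi'$, can exceed $\theta$, so the inequality genuinely fails without control of $\xi'$. The crux is therefore to exploit that the center is \emph{combinatorial} — only the $x$-variables indexed by $\mathcal{I}$ are modified — together with the equimultiplicity hypothesis $\Eord_{\xi}(f)=\theta=\ord_Z(f)$, to guarantee that the coordinates outside $\mathcal{I}$ appearing in the dominant monomial stay nonzero after blowing up. Everything else is routine bookkeeping of exponents under the monomial substitution.
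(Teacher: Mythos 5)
Your proof follows the same route as the paper's own argument: pick one binomial generator $f$ with $\Eord_{\xi}(f)=\theta$, pass to a chart of the combinatorial blow-up, and bound $\Eord_{\xi'}(f^{\curlyvee})$ by the $E$-weight of a single monomial, using $Z\subset\Etop(P)$ to control the exponents; your semicontinuity derivation of $\theta=\ord_Z(f)=\min(A,B)$ is a correct (and more explicit) version of the paper's bare assertion that $\sum_{i\in\mathcal{I}}\alpha_i=\theta$. However, one step is wrong as stated. From $\min(|\alpha|_{\xi},|\beta|_{\xi})=\theta=\min(A,B)$, $|\alpha|_{\xi}\geq A$ and $|\beta|_{\xi}\geq B$, you may only conclude that \emph{at least one} of $|\alpha|_{\xi}=A=\theta$ or $|\beta|_{\xi}=B=\theta$ holds; when $A=B$ it can happen that only the $\beta$-condition holds, contrary to your claim that $A\leq B$ forces the $\alpha$-condition. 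A concrete instance satisfying all hypotheses of the proposition: $P=\langle x_1x_3-x_2x_4,\ x_1-x_2\rangle$ in $K[x_1,\ldots,x_4]$, where $\Etop(P)=\{x_1=x_2=0\}=Z$, $\theta=1$, and $\xi=(0,0,0,1)\in Z$. The generator $f=x_1x_3-x_2x_4$ attains $\Eord_{\xi}(f)=1$ and has $A=B=1$, but its vanishing $\alpha$-support at $\xi$ is $\{1,3\}\supsetneq\{1\}$. In the chart dividing by $x_1$ one gets $f^{\curlyvee}=x_3-x_2'x_4$, and the first monomial $x_3$ has $E$-weight $1$ at every $\xi'$ over $\xi$, not $\leq\sum_{j\in\mathcal{I}\setminus\{i_0\}}\alpha_j=0$ as your bound asserts; so the $A\leq B$ branch of your argument breaks down here, and the desired inequality survives only because the second monomial happens to have weight $\leq\theta$.

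The gap is localized and the repair uses only material already in your proof: key the dichotomy on the support conditions rather than on the sign of $A-B$. Since $\theta=\min(|\alpha|_{\xi},|\beta|_{\xi})$, either $|\alpha|_{\xi}=\theta$, which combined with $A\leq|\alpha|_{\xi}$ and $\min(A,B)=\theta$ gives $A=|\alpha|_{\xi}=\theta$, or symmetrically $B=|\beta|_{\xi}=\theta$. In the first case your first-monomial estimate is valid verbatim: $A-\theta=0$, and the $\alpha$-variables outside $\mathcal{I}$ do not vanish at $\xi$, hence not at $\xi'$. In the second case run the identical estimate on the second monomial: $B-\theta=0$ and the $\beta$-variables outside $\mathcal{I}$ do not vanish, giving weight $\leq\sum_{j\in\mathcal{I}\setminus\{i_0\}}\beta_j\leq B=\theta$. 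With that correction your argument is complete and essentially coincides with the paper's proof; for comparison, the paper sidesteps the issue by normalizing its generator so that $\Eord_{\xi}(f)=|\alpha|$ (which builds the $\alpha$-condition into the choice), at the cost of not justifying that such a generator always exists among those attaining the minimum.
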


\begin{proof} By construction $P$ is generated by binomials of the form $y^{\gamma}x^{\alpha}-bx^{\beta}$ with $|\alpha|> 0$ as in equation (\ref{efegamma}). Let $f(x,y)=y^{\gamma}x^{\alpha}-bx^{\beta}\in P$ be an equation in $P$ such that $\Eord_{\xi}(f)=\theta=|\alpha|$. Let assume $f$ has no common factors. After the blow up along $Z$:

\begin{enumerate}
	\item[-] At some $j$-th chart where $\alpha_j> 0$, the $E$-order of $f^{\curlyvee}$ drops respect to the $E$-order of $f$ $$f^{*}(x',y)={{x_j}'}^{\sum_{i\in \mathcal{I}}\alpha_i}\cdot f^{\curlyvee}(x',y)={{x_j}'}^{\sum_{i\in \mathcal{I}}\alpha_i}\cdot(y^{\gamma}{x'}^{\alpha^{*}}-b{x'}^{\beta}{{x_j}'}^{\sum_{i\in \mathcal{I}}(\beta_i-\alpha_i)})$$ where $\alpha^{*}=\alpha- (0,\ldots,0, \alpha_j,0,\ldots,0)$, $|\alpha^{*}|=|\alpha|-\alpha_j$, and $\sum_{i\in \mathcal{I}}\alpha_i=\theta$ since $\Eord_{\xi}(f)=\theta$ and $Z\subset \Etop(P)$. 

The center $Z$ satisfies $Z\subset \Etop(P)$ then $\sum_{i \in \mathcal{I}} (\beta_i-\alpha_i)\geq 0$, so $$|\beta|+\sum_{i \in \mathcal{I}} (\beta_i-\alpha_i)\geq |\beta|\geq |\alpha|> |\alpha^{*}|$$ and therefore  $\Eord_{\xi'}(f^{\curlyvee})= |\alpha^{*}|< |\alpha|$.           

 \item[-]  At some $(k+j)$-th chart where $1\leq j\leq l$, $\beta_{k+j}>0$,  $$f^{\curlyvee}(x',y)=y^{\gamma}{x'}^{\alpha}-b{x'}^{\beta^{*}}{{x_{k+j}}'}^{\sum_{i\in \mathcal{I}}(\beta_i-\alpha_i)}$$ with $\beta^{*}=\beta-(0,\ldots,0,\beta_{k+j},0,\ldots,0)\in \mathbb{N}^n$. Then $\Eord_{\xi'}(f^{\curlyvee})\leq |\alpha|$. 
\end{enumerate} This provides $\Eord_{\xi'}(P^{\curlyvee})\leq \Eord_{\xi}(P)$.
\end{proof}

\begin{remark} Note that $Z\subset \Etop(P)$ does not imply $Z\subset top(P)$. In the previous proof, it is not allowed to use $\Eord_{\xi}(P)\leq \ord_{\xi}(P)$ and that the property to be proved holds for the order function, because these facts can not ensure $Z\subset top(P)$.  
\end{remark}

\begin{corollary} As a consequence of proposition \ref{dropblup},  $\max\ \Eord(P)\geq \max\ \Eord(P^{\curlyvee}).$
\end{corollary}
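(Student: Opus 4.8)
The plan is to deduce the global inequality on maxima from the pointwise inequality furnished by Proposition \ref{dropblup}, via a case analysis that separates the behaviour over the center $Z$ from the behaviour away from it. First I would note that, since the blow up is along a combinatorial center, $P^{\curlyvee}$ is again a binomial ideal as in Definition \ref{jota}; hence by Proposition \ref{usc} the function $\xi'\mapsto \Eord_{\xi'}(P^{\curlyvee})$ is upper semi-continuous with finite image and attains its maximum at some point $\xi'\in W'$, say $\Eord_{\xi'}(P^{\curlyvee})=\max\ \Eord(P^{\curlyvee})$. Setting $\xi=\pi(\xi')$, it then suffices to show $\Eord_{\xi'}(P^{\curlyvee})\leq \max\ \Eord(P)$.

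Now I would split into two cases according to whether $\xi$ lies in $Z$. If $\xi\in Z$, then $Z\subset \Etop(P)$ forces $\Eord_{\xi}(P)=\max\ \Eord(P)=\theta$, so $\xi$ is a point of maximal $E$-order and Proposition \ref{dropblup} applies directly at $\xi$, yielding $\Eord_{\xi'}(P^{\curlyvee})\leq \Eord_{\xi}(P)=\theta=\max\ \Eord(P)$. If instead $\xi\notin Z$, then $\pi$ is an isomorphism in a neighbourhood of $\xi'$ and the exceptional divisor $Y'$ does not pass through $\xi'$; hence $I(Y')$ is a unit there and the total transform $P\mathcal{O}_{W'}=I(Y')^{\theta}\cdot P^{\curlyvee}$ coincides with $P^{\curlyvee}$, which is carried isomorphically onto $P$ near $\xi'$. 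Since away from $Z$ the transformed divisor $E'$ restricts to $E$ (no component $V_i^{\curlyvee}$ is altered and $Y'$ is absent), the $E$-order is preserved under this isomorphism, so $\Eord_{\xi'}(P^{\curlyvee})=\Eord_{\xi}(P)\leq \max\ \Eord(P)$.

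Combining the two cases gives $\max\ \Eord(P^{\curlyvee})=\Eord_{\xi'}(P^{\curlyvee})\leq \max\ \Eord(P)$, which is the claim. The only point requiring genuine care is the off-center case: Proposition \ref{dropblup} controls only the fibres over maximal points of $P$ (in particular over $Z\subset\Etop(P)$), so one must argue separately that the maximal value of $P^{\curlyvee}$ cannot be achieved over a point outside the center. This is exactly where I would use that blowing up is an isomorphism away from $Z$ together with the stability of the divisor $E$ under the transformation; I expect this to be the main, though mild, obstacle, the remainder being a direct invocation of the proposition.
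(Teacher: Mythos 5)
Your proof is correct and follows the route the paper intends: the corollary is stated there as an immediate consequence of Proposition \ref{dropblup}, and your case analysis (the fibre over $Z\subset\Etop(P)$ handled by the proposition, the locus off the center handled by the fact that $\pi$ is an isomorphism there and leaves $E$ and hence the $E$-order unchanged) is exactly the implicit argument, spelled out. The only remark is that the appeal to upper semi-continuity to realize the maximum is not strictly needed, since the pointwise bound $\Eord_{\xi'}(P^{\curlyvee})\leq\max\ \Eord(P)$ for every $\xi'\in W'$ already bounds the supremum.
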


\begin{lemma} {\bf Existence of hypersurfaces of $E$-maximal contact} \label{hiper} \\
Let $P_l\subset \mathcal{O}_{W_l}$ be a binomial ideal (\ref{jota}) in dimension $l$, $n\geq l \geq 1$. Suppose $l=n$. \\ There exists a hypersurface of $E$-maximal contact for $P_n$ of type $\{x_i\!=\!0\}$. 
\end{lemma}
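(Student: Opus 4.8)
The plan is to fix a point of maximal $E$-order, produce a coordinate hypersurface through it by means of Theorem \ref{gen} and Corollary \ref{esing}, and then check that its strict transform retains this property under any combinatorial blow up that preserves the maximal $E$-order. Write $P_n=<f_1,\ldots,f_m>$ with $f_k=y^{\gamma_k}x^{\alpha_k}-b_kx^{\beta_k}$ as in definition \ref{jota}; the case of a pure monomial generator and the case of common factors reduce to this one by additivity of $\Eord$ (the remark after Proposition \ref{Eorden}) and are an easy exercise, and we may assume $\theta:=\max\ \Eord(P_n)>0$, the case $\theta=0$ being the monomial case treated separately.

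First I would locate the maximum explicitly. Since the $E$-order of an ideal is the minimum of the $E$-orders of a generating set, and since $\Eord_{\eta}(f_k)\leq|\alpha_k|$ for every $\eta$ with equality at the point where all $x$-coordinates vanish, the point $\xi$ with all $x$-coordinates equal to $0$ (and the $y$-coordinates set to units, which is legitimate in $W=Spec(K[x,y]_y)$) satisfies $\Eord_{\xi}(P_n)=\min_k|\alpha_k|\geq\Eord_{\eta}(P_n)$ for every $\eta$. Thus $\theta$ is attained at $\xi$, and there is a generator $f=y^{\gamma}x^{\alpha}-bx^{\beta}$ with $\Eord_{\xi}(f)=\theta=|\alpha|$. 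At $\xi$ all $x$-coordinates vanish and $y^{\gamma}(\xi)\neq 0$, so $f$ meets the hypotheses of Theorem \ref{gen}; Corollary \ref{esing} then yields an index $i$ with $\alpha_i>0$ such that, in a neighbourhood of $\xi$, $\ESing(P_n,\theta)\subseteq V$ with $V=\{x_i=0\}$. As $\xi\in V$ and $V$ is regular, the first two requirements in the definition of a hypersurface of $E$-maximal contact hold.

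It remains to check stability under blow up. Let $Z=\cap_{k\in\mathcal{I}}\{x_k=0\}\subseteq\Etop(P_n)\subseteq V$ be a combinatorial centre, so $i\in\mathcal{I}$; the condition $Z\subseteq\Etop(P_n)$ forces $\mathrm{supp}(\alpha)\subseteq\mathcal{I}$, $\sum_{k\in\mathcal{I}}\alpha_k=\theta$ and $\sum_{k\in\mathcal{I}}(\beta_k-\alpha_k)\geq 0$. By Proposition \ref{dropblup}, $\max\ \Eord(P_n^{\curlyvee})\leq\theta$, so there is something to verify only at points $\xi'$ over $\xi$ where $\Eord_{\xi'}(P_n^{\curlyvee})=\theta$ persists. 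Computing the weak transform of $f$ chart by chart as in the proof of Proposition \ref{dropblup}, in any chart where we divide by $x_j$ with $\alpha_j>0$ the strict-transform part of $f^{\curlyvee}$ has degree $\theta-\alpha_j<\theta$, so the $E$-order strictly drops and there is nothing to check. In the remaining charts the factor $x^{\alpha}$ is carried unchanged to ${x'}^{\alpha}$, so $f^{\curlyvee}=y^{\gamma}{x'}^{\alpha}-b\,{x'}^{\beta^{*}}{x'_{k+j}}^{\sum_{k\in\mathcal{I}}(\beta_k-\alpha_k)}$ is again a binomial as in \ref{jota} with the same $\alpha$; by disjointness of $\alpha$ and $\beta$ the index $i$ still satisfies $\alpha_i>0$ and differs from the chart variable, so $\{x_i'=0\}$ is exactly the strict transform $V'$ of $V$. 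Re-applying Theorem \ref{gen} and Corollary \ref{esing} to $f^{\curlyvee}$ at $\xi'$ gives $\ESing(P_n^{\curlyvee},\theta)\subseteq V'$, which is the required stability.

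The main obstacle I expect is this last step: controlling the weak transform of the binomial in the charts where the maximal $E$-order is preserved, and confirming that the surviving hypersurface of $E$-maximal contact is precisely the strict transform of $V$ rather than a different coordinate. This is exactly where the hypotheses $\mathrm{supp}(\alpha)\subseteq\mathcal{I}$ and the disjointness of $\alpha,\beta,\gamma$ are used, and where one must rule out that the distinguished coordinate jumps to another variable after blowing up. By contrast, the initial containment $\ESing(P_n,\theta)\subseteq V$ is comparatively routine once the maximum of $\Eord$ is placed at the coordinate origin.
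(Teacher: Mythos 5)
Your proposal is correct and follows essentially the same route as the paper: pick a generator $f$ of maximal $E$-order at a point of maximal $E$-order, invoke Theorem \ref{gen} and Corollary \ref{esing} for the containment $\ESing(P_n,\theta)\subseteq\{x_i=0\}$, and then redo the chart analysis of Proposition \ref{dropblup} to see that in the charts where the $E$-order can persist the weak transform $f^{\curlyvee}$ keeps the same $\alpha$, so Theorem \ref{gen} applies again to the strict transform $\{x_i'=0\}$. Your explicit placement of the maximum at the point where all $x$-coordinates vanish is a detail the paper leaves implicit, but it does not change the argument.
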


\begin{proof} 
By construction $P_n$ is generated by binomial equations $y^{\gamma}x^{\alpha}-bx^{\beta}$ with $|\alpha|> 0$ as in (\ref{efegamma}), or their $p$-th powers. Set $\theta_n=\max\ \Eord(P_n)$. Let $f(x,y)=y^{\gamma}x^{\alpha}-bx^{\beta}\in P_n$ be a binomial equation such that $\Eord_{\xi}(f)=\theta_n=|\alpha|$ at some point $\xi \in W$. Assume $f$ has no common factors. 

By corollary (\ref{esing}), $\ESing(P_n,\theta_n)\subseteq \{x_i=0\}$ for some $i\in \{1,\ldots,k\}$, $\alpha_i>0$. Blow up along some combinatorial center $Z=\cap_{j\in \mathcal{I}}\{x_j=0\}$ $\subset \Etop(P_n)\subset \{x_i=0\}$, with $\mathcal{I}\subseteq \{1,\ldots,n\}$. For some point $\xi'\in W'$ such that $\pi(\xi')=\xi$, if the $E$-order remains constant $\Eord_{\xi'}(P_n^{\curlyvee})=\theta_n$ then $\ESing(P_n^{\curlyvee},\theta_n)\subseteq \{x_i'=0\}$. 

Proceed as in proof of proposition (\ref{dropblup}). At some $j$-th chart with $\alpha_j> 0$ the $E$-order of $f^{\curlyvee}$ strictly decreases respect to the $E$-order of $f$. Then look at some ${(k+j)}$-th chart with $1\leq j\leq l$, $\beta_{k+j}>0$. At this chart $\Eord_{\xi'}(f^{\curlyvee})\leq|\alpha|$.

Since $\Eord_{\xi'}(P_n^{\curlyvee})=|\alpha|=\theta_n$ then $\Eord_{\xi'}(f^{\curlyvee})=|\alpha|$. Moreover  $\alpha_i>0$, then $f^{\curlyvee}$ satisfies the hypothesis of theorem (\ref{gen}). Therefore, $$\ESing(P_n^{\curlyvee},|\alpha|)\subseteq S_{f^{\curlyvee},E}(\xi')\subseteq \{x_i'=0\}. \vspace*{-0.5cm}$$  
\end{proof}	

\begin{lemma} {\bf Stability under blowing-up} \label{esta} \\ 
Let $V$ be a hypersurface of $E$-maximal contact for a binomial ideal $P$. Let $Z\subset \Etop(P)$ be the next combinatorial center to be blown up. Let $V'=\{x_i'=0\}$ be the strict transform of $V$ by the blow up along $Z$.

If the $E$-order remains constant after the blow up, then $V'$ is a hypersurface of $E$-maximal contact for $P^{\curlyvee}$.
\end{lemma}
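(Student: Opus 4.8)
The plan is to reduce the statement to the chart-by-chart computation already carried out in the proof of Lemma \ref{hiper}, now run from a \emph{given} hypersurface of $E$-maximal contact instead of forwards to construct one. First I would recall, via the remark following Proposition \ref{Ecoeford} together with Theorem \ref{gen} and Corollary \ref{esing}, that a hypersurface of $E$-maximal contact for the binomial ideal $P$ is necessarily of coordinate type, say $V=\{x_i=0\}$, and that there is a binomial equation $f(x,y)=y^{\gamma}x^{\alpha}-bx^{\beta}\in P$ without common factors with $\alpha_i>0$ and $\Eord_{\xi}(f)=|\alpha|=\theta=\max\ \Eord(P)$, for which $\{x_i=0\}$ is itself a hypersurface of $E$-maximal contact. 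Since $Z\subset \Etop(P)=\ESing(P,\theta)\subseteq V=\{x_i=0\}$, the index $i$ must belong to the set $\mathcal{I}$ defining $Z=\cap_{l\in\mathcal{I}}\{x_l=0\}$, so that the strict transform of $V$ is indeed $V'=\{x_i'=0\}$ in the charts indexed by $l\in\mathcal{I}$, $l\neq i$.

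Next I would track the weak transform $f^{\curlyvee}$ through the blow up exactly as in Proposition \ref{dropblup}. Fix a point $\xi'$ with $\pi(\xi')=\xi$ where, by hypothesis, the $E$-order is preserved, $\Eord_{\xi'}(P^{\curlyvee})=\theta$. In any chart obtained by dividing by a variable $x_j$ with $\alpha_j>0$, the computation of Proposition \ref{dropblup} shows that the $E$-order of $f^{\curlyvee}$ strictly drops below $|\alpha|$; hence a point of maximal $E$-order must lie in a chart where the $\alpha$-monomial is left untouched. By the disjointness of $\{\alpha,\beta,\gamma\}$ and the inequality $\sum_{l\in\mathcal{I}}(\beta_l-\alpha_l)\geq 0$ (valid because $Z\subset\Etop(P)$), in such a $(k+j)$-th chart $f^{\curlyvee}=y^{\gamma}{x'}^{\alpha}-b{x'}^{\beta^{*}}{{x_{k+j}}'}^{\sum_{l\in\mathcal{I}}(\beta_l-\alpha_l)}$ still carries the full exponent $\alpha$, so $x_i'$ divides its leading monomial with $\alpha_i>0$ and $\Eord_{\xi'}(f^{\curlyvee})=|\alpha|=\theta$.

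Having produced an equation $f^{\curlyvee}\in P^{\curlyvee}$ with $\alpha_i>0$, $y^{\gamma}(\xi')\neq 0$ and $\Eord_{\xi'}(f^{\curlyvee})=\theta=\max\ \Eord(P^{\curlyvee})$, I would apply Theorem \ref{gen} to obtain $S_{f^{\curlyvee},E}(\xi')\subseteq\bigcap_{\{l|\ \alpha_l>0\}}\{x_l'=0\}\subseteq\{x_i'=0\}=V'$, and then Corollary \ref{esing} to conclude $\ESing(P^{\curlyvee},\theta)\subseteq S_{f^{\curlyvee},E}(\xi')\subseteq V'$. Since $V'$ is regular and $\xi'\in V'$, the first defining property of a hypersurface of $E$-maximal contact holds. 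The stability requirement under further blow ups then follows by self-similarity: $V'=\{x_i'=0\}$ is again a coordinate hypersurface and $P^{\curlyvee}$ is again a binomial ideal of the form \ref{jota} with the same maximal $E$-order $\theta$, so the preceding argument may be iterated at every subsequent combinatorial center $Z'\subset\Etop(P^{\curlyvee})$ as long as $\max\ \Eord$ stays equal to $\theta$, which is precisely the standing hypothesis. This establishes that $V'$ is a hypersurface of $E$-maximal contact for $P^{\curlyvee}$.

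The step I expect to be the main obstacle is the middle one: confirming that the variable $x_i$ cutting out the maximal-contact hypersurface keeps a strictly positive exponent in $f^{\curlyvee}$ exactly in those charts where the maximal $E$-order survives. This is where the disjointness of the exponent vectors and the condition $Z\subset\Etop(P)$ (ensuring $\sum_{l\in\mathcal{I}}(\beta_l-\alpha_l)\geq 0$) must be used carefully, so that the drop of $E$-order is confined to the $\alpha$-charts while the surviving charts retain $\alpha_i>0$, allowing Theorem \ref{gen} to be applied to $f^{\curlyvee}$.
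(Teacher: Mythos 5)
Your proposal is correct and follows essentially the same route as the paper: the paper's own proof of Lemma \ref{esta} simply observes that $f^{\curlyvee}$ is again a binomial with $\alpha_i'>0$ and "satisfies the conditions of proof of Lemma \ref{hiper}", which is exactly the chart-by-chart argument (drop of $E$-order in the $\alpha$-charts via Proposition \ref{dropblup}, survival of the full exponent $\alpha$ in the remaining charts, then Theorem \ref{gen} and Corollary \ref{esing}) that you spell out. Your version is just a more explicit unwinding of that reference, including the iteration step that the paper leaves implicit.
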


\begin{proof} With the notation of the previous lemma, note that $f^{\curlyvee}\in P^{\curlyvee}$ is of the form $f^{\curlyvee}(x',y)=y^{\gamma}{x'}^{\alpha'}-b{x'}^{\beta'}$ where $\alpha_i'>0$. It satisfies the conditions of proof of lemma (\ref{hiper}).
\end{proof}

\begin{remark} In the proof of lemma (\ref{hiper}) it is necessary to use $\Eord_{\xi}(P_i)\geq \Eord_{\xi'}(P_i^{\curlyvee})$ for a binomial ideal $P_i$ in dimension $i$, whereas the $E$-resolution function remains constant at higher dimension $(t_n'(\xi'),\ldots,t_{i+1}'(\xi'))=(t_n(\xi),\ldots,t_{i+1}(\xi))$. This fact is consequence of lemma (\ref{bajainv}).
\end{remark}

\begin{parag}
It is necessary to consider $H$ as part of a BBOE in order to take into account the exceptional divisors coming from the previous blow ups and their transforms (which, in fact, belong to $E$).  

When the higher coordinates of the $E$-resolution function remain constant under the blow up, the strict transforms of the hypersurfaces of $E$-maximal contact coming from the previous step are again hypersurfaces of $E$-maximal contact. In this situation, it is indispensable to consider these strict transforms at the same position as their ancestors in the previous step. 

This means to determine the set of \emph{permissible} hypersurfaces. 
\end{parag}

\begin{parag} \label{perhyp} Let $H_i^{(k)}\neq\emptyset$ be the exceptional divisor in dimension $i$, at the $k$-th stage of the $E$-resolution process. 

Let $\xi^{(k_0)}\in W^{(k_0)}, \xi^{(k-1)}\in W^{(k-1)}$ and $\xi^{(k)}\in W^{(k)}$ be points where the $E$-resolution function attains its maximal value and satisfy $\pi_k(\xi^{(k)})=\xi^{(k-1)},\ldots,\pi_{k_0+1}(\xi^{(k_0+1)})=\xi^{(k_0)}$, $k_0\leq k$.  

If $\max\ t_i^{(k)}>0$, let $k_0$ be the smaller superscript such that $$\max\ t_i^{(k_0-1)}>\max\ t_i^{(k_0)}=\ldots=\max\ t_i^{(k)}$$ when the higher coordinates of the $E$-resolution function remain constant under the blow up, {\small $$\begin{array}{rl}(t_n^{(k)}(\xi^{(k)}),t_{n-1}^{(k)}(\xi^{(k)}),\ldots,t_{i}^{(k)}(\xi^{(k)})) & \hspace*{-0.2cm} =(t_n^{(k-1)}(\xi^{(k-1)}),t_{n-1}^{(k-1)}(\xi^{(k-1)}),\ldots,t_{i}^{(k-1)}(\xi^{(k-1)})) \\ & \hspace*{-0.2cm} =\ldots= (t_n^{(k_0)}(\xi^{(k_0)}),t_{n-1}^{(k_0)}(\xi^{(k_0)}),\ldots,t_{i}^{(k_0)}(\xi^{(k_0)})). \end{array}$$} 
\end{parag}

\begin{definition}
Let $H_i^{(k)-}$ be the set of hypersurfaces of $H_i^{(k)}$ which are strict transforms of hypersurfaces in $H_i^{(k_0)}$, following the notation of paragraph \ref{perhyp}. Then, set $H_i^{(k)}$ as the disjoint union $H_i^{(k)}=H_i^{(k)+}\sqcup H_i^{(k)-}$.

The hypersurfaces in $H_i^{(k)-}$ are called \emph{permissible} hypersurfaces, in the sense that they are the only hypersurfaces of $E$-maximal contact that can be considered in practice to make induction on the dimension at this step of the $E$-resolution process.    
\end{definition}

\begin{remark} If $H=\emptyset$ then any hypersurface of $E$-maximal contact is permissible. 
\end{remark}

\subsection{Stability of binomial basic objects along $E$} \label{stabin} 

In this section we prove the stability of the structure of a BBOE, following the same point of view as in \cite{strong} page $837$. 

After a blow up $\pi$, when the coordinates of the $E$-resolution function from dimension $n$ up to dimension $i+1$ remain constant, we construct the corresponding BBOE in dimension $i$.

At some stage of the $E$-resolution process, the ideals $(J_n,\ldots,J_{i+1})$ are defined in $W_n,\ldots,W_{i+1}$ in a neighborhood of the point $\xi$. After a blow up $W_n\stackrel{\pi}{\leftarrow}W_n'$, it is possible to construct   $(J_n',\ldots,J_{i+1}')$ in $W_n',\ldots,W_{i+1}'$ in a neighborhood of the point $\xi'$, with $\pi(\xi')=\xi$.

\begin{notation} For some $j+1$ with $n\geq j\geq i+1$, following the notation of \cite{strong}, denote by 
$$O_{j+1}'\!=\!\{\xi'\in W_{j+1}'|\  (t_n'(\xi'),\ldots,t_{j+2}'(\xi'))\!=\!(t_n(\xi),\ldots,t_{j+2}(\xi)), \theta_{j+1}'\!=\!\theta_{j+1}\}$$ the set of points in $W_{j+1}'$ where the $E$-resolution function remains constant up to dimension $j+2$ and $\theta_{j+1}'=\theta_{j+1}$. 

The set $T_{j+1}'$ is the set of points in $W_{j+1}'$ where the $E$-resolution function remains constant up to dimension $j+1$, $$T_{j+1}'=\{\xi'\in W_{j+1}'\ |\  (t_n'(\xi'),\ldots,t_{j+1}'(\xi'))=(t_n(\xi),\ldots,t_{j+1}(\xi))\}.$$ 
\end{notation}

\begin{proposition} At any stage $j+1$ of the $E$-resolution process $T_{j+1}'=O_{j+1}'$.
\end{proposition}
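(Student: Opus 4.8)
The plan is to reduce the set equality to a pointwise equivalence and then to isolate the single quantity that distinguishes the two definitions. Both $T_{j+1}'$ and $O_{j+1}'$ are contained in the set
$$C=\{\xi'\in W_{j+1}'\ |\ (t_n'(\xi'),\ldots,t_{j+2}'(\xi'))=(t_n(\xi),\ldots,t_{j+2}(\xi))\},$$
on which the $E$-resolution function is constant from dimension $n$ down to dimension $j+2$. On $C$ the two definitions differ only in the extra condition they impose in dimension $j+1$: membership in $T_{j+1}'$ requires $t_{j+1}'(\xi')=t_{j+1}(\xi)$, while membership in $O_{j+1}'$ requires $\theta_{j+1}'=\theta_{j+1}$. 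Since $t_{j+1}=\theta_{j+1}/c_{j+2}$, where $c_{j+2}$ is the critical value in dimension $j+1$, it suffices to prove that $c_{j+2}'=c_{j+2}$ at every point of $C$; the equivalence $t_{j+1}'=t_{j+1}\iff \theta_{j+1}'=\theta_{j+1}$ then follows by clearing the common denominator.

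First I would record the reduction precisely. For $\xi'\in T_{j+1}'$ we have in particular $\xi'\in C$ and $t_{j+1}'(\xi')=t_{j+1}(\xi)$; granting $c_{j+2}'=c_{j+2}$, the identities $t_{j+1}'=\theta_{j+1}'/c_{j+2}'$ and $t_{j+1}=\theta_{j+1}/c_{j+2}$ force $\theta_{j+1}'=\theta_{j+1}$, so $\xi'\in O_{j+1}'$. Conversely, for $\xi'\in O_{j+1}'$ we have $\xi'\in C$ and $\theta_{j+1}'=\theta_{j+1}$, and the same two identities together with $c_{j+2}'=c_{j+2}$ give $t_{j+1}'=t_{j+1}$, so $\xi'\in T_{j+1}'$. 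Thus $T_{j+1}'=O_{j+1}'$ once the stability of the critical value is established.

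The heart of the argument, and the step I expect to be the main obstacle, is the claim $c_{j+2}'=c_{j+2}$ on $C$. The point is that $c_{j+2}=\max\ \Eord(P_{j+2})$ is manufactured entirely from the data in dimensions $\geq j+2$: it is the maximal $E$-order of the companion ideal $P_{j+2}$, from which the junior ideal $J_{j+1}=\ECoeff_V(P_{j+2})$ in dimension $j+1$ is formed. I would argue by downward induction on the dimension that, on the locus where the higher coordinates of the resolution function are constant, the whole construction chain in dimensions $\geq j+2$ --- the factorization $J_{j+2}=M_{j+2}\cdot I_{j+2}$, the companion ideal $P_{j+2}$, and the coefficient ideal --- is carried to its analogue after the blow up. Concretely, constancy of $t_{j+2}$ together with the inductive hypothesis $c_{j+3}'=c_{j+3}$ yields $\theta_{j+2}'=\theta_{j+2}$; the remark on the weak transform of the companion ideal then gives $P_{j+2}^{\curlyvee}=I_{j+2}^{\curlyvee}+(M_{j+2}')^{\theta_{j+2}/(c_{j+3}-\theta_{j+2})}$, so that $P_{j+2}'$ is exactly the companion ideal of the transformed object; and Proposition \ref{dropblup} applied to $P_{j+2}$, which yields $\max\ \Eord(P_{j+2})\geq \max\ \Eord(P_{j+2}^{\curlyvee})$ with equality precisely at the constancy locus, forces $\max\ \Eord(P_{j+2}')=\max\ \Eord(P_{j+2})$. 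Hence $c_{j+2}'=c_{j+2}$. The compatibility of coefficient ideals with transforms needed to keep the chain aligned is supplied by Proposition \ref{conmutdebil}.

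Finally I would dispose of the degenerate cases separately. When $I_l=1$ (or $I_l\equiv 1$) in some higher dimension the corresponding component is $\infty$, and constancy of such a component is a purely combinatorial bookkeeping statement that transfers unchanged under the blow up. When $\theta_{j+1}=0$ the component $t_{j+1}$ is replaced by the monomial-case function $\Gamma$, so the equivalence in dimension $j+1$ can no longer be read off from $t_{j+1}=\theta_{j+1}/c_{j+2}$; I would either confirm that this situation does not arise at the level $j+1$ under consideration (induction on the dimension is abandoned once the monomial case is reached) or else handle it through the invariance properties of $\Gamma$ quoted from \cite{course}, recording explicitly the range $\theta_{j+1}>0$ in which the formula-based argument applies.
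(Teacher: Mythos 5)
Your overall strategy coincides with the paper's: reduce the set equality to the stability of the denominator in $t_{j+1}=\theta_{j+1}/c_{j+2}$, and propagate that stability downward from dimension $n$ by a telescoping induction. The reduction itself is correct, and so is the first half of each induction step ($t_k'=t_k$ together with $c_{k+1}'=c_{k+1}$ forces $\theta_k'=\theta_k$). The problem is the step that closes each stage, namely passing from $\theta_k'=\theta_k$ to $c_k'=c_k$. You justify it by asserting that Proposition \ref{dropblup} gives $\max\ \Eord(P_{j+2})\geq \max\ \Eord(P_{j+2}^{\curlyvee})$ ``with equality precisely at the constancy locus.'' Proposition \ref{dropblup} and its corollary assert only the inequality; no equality criterion appears anywhere in the paper, and the equality you attribute to it is essentially the statement you are trying to establish, so as written this step of the induction is circular and the argument does not close.

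The missing ingredient is the identity with which the paper's proof opens: at the points of maximal $E$-order one has $\Etop(P_k)\subset\Etop(I_k)$ and $\Eord(P_k)=\Eord(I_k)$ there (this is the remark following the definition of the companion ideal), hence $c_k=\max\ \Eord(P_k)=\max\ \Eord(I_k)=\theta_k$ for every $k$, both before and after the blow up. With this identity your induction step becomes immediate, $c_k'=\theta_k'=\theta_k=c_k$, and neither the commutation $P_k'=P_k^{\curlyvee}$ nor Proposition \ref{dropblup} is needed at all. Indeed this is what makes the paper's proof essentially one line: substituting $c_{k+1}=\theta_{k+1}$ turns the resolution function into the chain $\left(\frac{\theta_n}{c},\frac{\theta_{n-1}}{\theta_n},\ldots,\frac{\theta_{j+1}}{\theta_{j+2}}\right)$, equality of the truncated tuples telescopes to $\theta_{j+2}'=\theta_{j+2}$, and both inclusions $T_{j+1}'\subset O_{j+1}'$ and $O_{j+1}'\subset T_{j+1}'$ follow by reading off the last ratio. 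Your separate discussion of the degenerate cases (the $\infty$ components and the monomial case $\Gamma$) is a sensible caveat --- the paper's proof tacitly restricts to $\theta_k>0$ --- but it does not repair the main step.
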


\begin{proof}
Since $c_k=\Eord(P_k)=\Eord(I_k)=\theta_k$ for all $n\geq k\geq 1$ at the points of maximal $E$-order, the $E$-resolution function satisfies 
$$ (t_n(\xi),\ldots,t_{j+1}(\xi))\!=\! { \left(\frac{\theta_n}{c_{n+1}},\frac{\theta_{n-1}}{c_{n}},\ldots,\frac{\theta_{j+2}}{c_{j+3}},\frac{\theta_{j+1}}{c_{j+2}}\right)=
\left(\frac{\theta_n}{c},\frac{\theta_{n-1}}{\theta_n},\ldots,\frac{\theta_{j+2}}{\theta_{j+3}},\frac{\theta_{j+1}}{\theta_{j+2}}\right)}.$$

It is obvious that $T_{j+1}'\subset O_{j+1}'$. If $\xi'\in O_{j+1}'$ then $$\left(\frac{\theta_{n}'}{c},\frac{\theta_{n-1}'}{\theta_{n}'},\ldots,\frac{\theta_{j+2}'}{\theta_{j+3}'}\right)=\left(\frac{\theta_{n}}{c},\frac{\theta_{n-1}}{\theta_{n}}, \ldots,\frac{\theta_{j+2}}{\theta_{j+3}}\right)$$ what implies $\theta_{j+2}'=\theta_{j+2}$. On the other hand $\theta_{j+1}'=\theta_{j+1}$ so 
$t_{j+1}'(\xi')=t_{j+1}(\xi)$.  Therefore $\xi'\in T_{j+1}'$.
\end{proof}	

\begin{parag} \label{commuta}
After the blowing up $W_n\stackrel{\pi}{\leftarrow}W_n'$, the new setup constructed in $W_n',\ldots,W_{i+1}'$ satisfy, for all $n\geq j\geq i+1$ : 
\begin{itemize}
	\item $W_j'=W_j^{\curlyvee}$ for all point in $T_{j+1}'$, where $W_j^{\curlyvee}$ is the strict transform of $W_j$. 
	\item $J_j'=J_j^{!}$ and $I_j'=I_j^{\curlyvee}$ in $T_{j+1}'$. To construct the ideals $M_j'$ (remark \ref{fac}) the divisors $D_j'$ are defined as in \cite{strong} \begin{equation} \label{leydes} D_j'=\left\{ \begin{array}{ll} D_j^*+(\theta_{j}-c_{j+1})\cdot Y' &
\text{ if } (t_n'(\xi'),\ldots,t_{j+1}'(\xi'))=(t_n(\xi),\ldots,t_{j+1}(\xi))  \\ \emptyset & \text{ otherwise } \end{array} \right.\end{equation} 
in the neighborhood of a point $\xi\in W_j$. Where $D_j^*$ is the pullback of $D_j$ by the blow up $\pi$, $Y'$ is the exceptional divisor, the point $\xi'\in W_j'$ satisfies $\pi(\xi')=\xi$, $\theta_{j}=\Eord_{\xi}(I_j)$ and $c_{j+1}$ is the corresponding critical value. 
	\item $P_j'=P_j^{\curlyvee}$ in $T_{j}'$. The exceptional divisor $H_j'$ is defined as in \cite{strong}
\begin{equation} \label{divex} H_j'=\left\{ \begin{array}{ll}  H_j^{\curlyvee} &
\text{ if }  \xi'\in T_{j}' \\ (Y'+(H_1\cup \ldots \cup H_n)^{\curlyvee})-(H_n'+\cdots
+H_{j+1}') & \text{ otherwise } \end{array} \right.\end{equation} 

in the neighborhood of a point $\xi\in W_j$. Where $H_j^{\curlyvee}$ is the strict transform of $H_j$ by the blow up $\pi$, $Y'$ is the exceptional divisor, the point $\xi'\in W_j'$ satisfies $\pi(\xi')=\xi$, $\theta_{j}'=\Eord_{\xi'}(I_j')$ and $\theta_{j}=\Eord_{\xi}(I_j)$. 	
\item $E_j'=E'\cap W_j'$. 
\end{itemize}
\end{parag}

Assume each $W_j$ is of $E$-maximal contact for $P_{j+1}$ in a neighborhood of $\xi$, for $n-1\geq j\geq i$. Analogously assume $W_j'$ is of $E$-maximal contact for $P_{j+1}'$ in a neighborhood of $\xi'$, for $n-1\geq j\geq i+1$.
\medskip

The key point is find a regular hypersurface $W_i'$ in a neighborhood of $\xi'\in W_{i+1}'$ such that $W_i'$ is of $E$-maximal contact for $P_{i+1}'$ and the previous commutativity relations (\ref{commuta}) are also fulfilled for $j=i$.
\medskip

\begin{parag}{\bf Constructions in dimension $i$.} 
\medskip

By hypothesis $W_i$ is a hypersurface of $E$-maximal contact for $P_{i+1}$. And by construction $P_{i+1}'=P_{i+1}^{\curlyvee}$ in $T_{i+1}'$ where $\theta_{i+1}'=\theta_{i+1}$. Then $W_i^{\curlyvee}$ is a hypersurface of $E$-maximal contact for $P_{i+1}'$. \\

So, we define $W_i'=W_i^{\curlyvee}$ in $T_{i+1}'$. Note that the $E$-order has not remained constant along the points of $W_{i+1}'$ outside $T_{i+1}'$. At those points, by lemma (\ref{hiper}) there exists a hypersurface of $E$-maximal contact for $P_{i+1}'$ in a neighborhood of $\xi'$, for all $\xi'\in \Etop(P_{i+1}')$.
\medskip

Let $J_i'$ be the junior ideal of $P_{i+1}'$ in $W_i'$. 
\begin{itemize}
	\item Assume $P_{i+1}$ and $P_{i+1}'$ are not bold regular or $1$. In $T_{i+1}'$ it holds  $\theta_{i+1}'=\theta_{i+1}$ then $$c_{i+1}'=\Eord(P_{i+1}')=\theta_{i+1}'=\theta_{i+1}=\Eord(P_{i+1})=c_{i+1}$$ at the points of maximal $E$-order. By proposition (\ref{conmutdebil})
\begin{equation} \label{Ecoef} (\ECoeff_{W_i}(P_{i+1}))^{!}=\ECoeff_{W_i'}(P_{i+1}^{\curlyvee}) \end{equation} 
therefore, the controlled transform of $J_i$ satisfies  $$(J_i)^{!}=(\ECoeff_{W_i}(P_{i+1}))^{!}=\ECoeff_{W_i'}(P_{i+1}')=J_i'$$ since $P_{i+1}'=P_{i+1}^{\curlyvee}$.	
	\item If $P_{i+1}'$ is bold regular or $1$, the previous equality (\ref{Ecoef}) gives $(\ECoeff_{W_i}(P_{i+1}))^{!}=0$. Then, by proposition \ref{boldreg}, $P_{i+1}$ is bold regular or $1$ and $J_i'=J_i^{!}=1$.
  \item If $P_{i+1}=1$ then $P_{i+1}'=P_{i+1}^{\curlyvee}=1$ in $T_{i+1}'$ and $J_i'=J_i^{!}=1$.
  \item If $P_{i+1}\neq 1$ and it is bold regular, then $P_{i+1}=<y^{\gamma}(1-\mu y^{\delta})x_1^{\alpha_1}>$ where $\mu \in K$, $\gamma,\delta\in \mathbb{Z}^n$, $\alpha_1\in \mathbb{N}$, $\alpha_1=\max\ \Eord(P_{i+1})>0$.  The hypersurface $\{x_1=0\}$ is a hypersurface of $E$-maximal contact for $P_{i+1}$. A blow up along a combinatorial center $Z\subset \{x_1=0\}$ provides $P_{i+1}'=P_{i+1}^{\curlyvee}=<y^{\gamma}(1-\mu y^{\delta})>$. So  $\Eord_{\xi'}(P_{i+1}')=0$ for all $\xi'\in W_i'$ and then $\theta_{i+1}'=0<\theta_{i+1}$ therefore $\xi'\not\in T_{i+1}'$.  
\end{itemize}
Hence $J_i'=J_i^{!}$ in $T_{i+1}'$.
\medskip

The ideal $M_i'=I_{W_i'}(D_i'\cap W_i')$ has support in the divisor $D_i'$, defined as in equation (\ref{leydes}). 
If $J_i=M_i\cdot I_i$ the controlled transform $J_i^{!}=I_{W_i'}(Y'\cap W_i')^{-c_{i+1}}\cdot J_i^{*}$ in $W_i'$. 
In addition if $M_i'=I_{W_i'}(Y'\cap W_i')^{\theta_i-c_{i+1}}\cdot M_i^{*}$ in $T_{i+1}'$ with $\theta_i=\Eord_{\xi}(I_i)$ and $I_i^{\curlyvee}=I_{W_i'}(Y'\cap W_i')^{-\theta_i}\cdot I_i^{*}$, then $$\begin{array}{rl} J_i' & \hspace*{-0.2cm} =I_{W_i'}(Y'\cap W_i')^{-c_{i+1}}\cdot J_i^{*}=I_{W_i'}(Y'\cap W_i')^{-c_{i+1}}\cdot (M_i\cdot I_i)^{*} \\ & \hspace*{-0.2cm} =I_{W_i'}(Y'\cap W_i')^{-c_{i+1}}\cdot M_i^{*}\cdot I_i^{*} \\ & \hspace*{-0.2cm} =I_{W_i'}(Y'\cap W_i')^{-c_{i+1}}\cdot I_{W_i'}(Y'\cap W_i')^{c_{i+1}-\theta_i}\cdot M_i'\cdot I_{W_i'}(Y'\cap W_i')^{\theta_i}\cdot I_i^{\curlyvee}\!=\!M_i'\cdot I_i^{\curlyvee} \end{array}$$ so $J_i'=M_i'\cdot I_i'$ where $I_i'=I_i^{\curlyvee}$ in $T_{i+1}'$. Note that $J_i'=I_i'$ outside $T_{i+1}'$, since in this case $D_i'=\emptyset$. 
\medskip

Let $\xi'\in T_{i+1}'$ be a point such that $\pi(\xi')=\xi$. We have $I_i'=I_i^{\curlyvee}$ in $T_{i+1}'$. If $\theta_i'=\Eord_{\xi'}(I_i')=\Eord_{\xi}(I_i)=\theta_i$ in a neighborhood of $\xi'$ then $\xi'\in T_{i}'$.  
\medskip

Construct now the companion ideal $P_i'$ in $T_{i}'$. We have $M_i'=M_i^*\cdot I_{W_i'}(Y'\cap W_i')^{\theta_i-c_{i+1}}$, $c_{i+1}'=c_{i+1}$ and $\theta_i'=\theta_i$, then $$\begin{array}{rl} P_i^{\curlyvee} & =(I_i + M_i^{\frac{\theta_i}{c_{i+1}-\theta_i}})^{\curlyvee}=I_i^{\curlyvee} + (M_i^{\frac{\theta_i}{c_{i+1}-\theta_i}})'= I_i^{\curlyvee} + (M_i')^{\frac{\theta_i}{c_{i+1}-\theta_i}}= \\ & =I_i' + (M_i')^{\frac{\theta_i'}{c_{i+1}'-\theta_i'}}=P_i' \end{array}$$ because of $I_i^{\curlyvee}=I_i'$. The case $P_i=I_i$ is obvious. Therefore $P_i'=P_i^{\curlyvee}$ in $T_{i}'$.

Finally, the exceptional divisor $H_i'$ is defined as in equation (\ref{divex}), and $E_i'=E\cap W_i'$. 
\end{parag}

\begin{remark} \label{pprima}
As a consequence, all the results of section (\ref{Ehipmaxc}) hold replacing $P^{\curlyvee}$ by $P'$. 
\end{remark}

\subsection{Commutativity} \label{comm}

The computation of the coefficient ideal along $E$ commutes with the blowing up.

\begin{corollary} \label{transfcoef} Let $P$ be an ideal in $W$. Let $\xi\in W$ be a point where $c=\Eord_{\xi}(P)$.  
Let $\pi$ be the blow up along $Z\subset \Etop(P)$, where $\pi(\xi')=\xi$ and $c'=\Eord_{\xi'}(P')$. If $c'=c$ then the coefficient ideal of $P$ along $E$ with respect to $V$ satisfies
$$\ECoeff_{V'}(P')=(\ECoeff_V(P))'$$ where $P'=P^{\curlyvee}$ and $V'$ is the strict transform of $V$.
\end{corollary}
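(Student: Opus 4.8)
The plan is to obtain this statement as a direct consequence of Proposition \ref{conmutdebil}, once the prime notation of Section \ref{stabin} is matched up with the $^{!}$ and $^{\curlyvee}$ notation used there; I do not expect any genuinely new computation to be required.

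First I would verify that $V$ is a hypersurface of $E$-maximal contact for $P$ at $\xi$. Since $Z\subset \Etop(P)$ is a combinatorial center and, by Theorem \ref{gen} together with Lemma \ref{hiper}, every hypersurface of $E$-maximal contact for a binomial ideal is of the coordinate type $\{x_i=0\}$, the hypersurface $V$ fixed in the inductive construction is indeed of $E$-maximal contact for $P$ at $\xi$. This places us exactly in the hypotheses of Proposition \ref{conmutdebil}.

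Next I would reconcile the prime notation with the explicit transforms. Following Section \ref{stabin}, the prime on the ideal $P$ (which plays the role of a companion ideal and is transformed by weak transform, as in $I_j'=I_j^{\curlyvee}$) denotes the weak transform, so that $P'=P^{\curlyvee}$ as stated, while $V'=V^{\curlyvee}$ is the strict transform of the hypersurface of $E$-maximal contact. On the other hand, the coefficient ideal $\ECoeff_V(P)$ is precisely the junior ideal that descends one dimension and is therefore transformed by controlled transform; that is, $(\ECoeff_V(P))'=(\ECoeff_V(P))^{!}$. With these identifications the asserted equality $\ECoeff_{V'}(P')=(\ECoeff_V(P))'$ is literally the equality $\ECoeff_{V^{\curlyvee}}(P^{\curlyvee})=(\ECoeff_V(P))^{!}$ established in Proposition \ref{conmutdebil}.

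Finally, the hypothesis $c'=\Eord_{\xi'}(P')=\Eord_{\xi}(P)=c$ is exactly the condition under which Proposition \ref{conmutdebil} guarantees this equality, namely that the blow-up does not drop the maximal $E$-order at $\xi'$ (which by Proposition \ref{dropblup} is the only admissible possibility when equality of orders holds). Invoking the proposition at the point $\xi'$ with $\pi(\xi')=\xi$ then closes the argument. The only point that requires care — and the main (if modest) obstacle — is the bookkeeping of which prime denotes the controlled transform and which denotes the weak or strict transform; once that convention is fixed consistently with Section \ref{stabin}, the corollary follows with no further computation.
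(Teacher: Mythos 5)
Your proposal is correct and takes essentially the same route as the paper: the corollary is obtained by directly invoking Proposition \ref{conmutdebil} and identifying the prime notation as $P'=P^{\curlyvee}$, $V'=V^{\curlyvee}$, and $(\ECoeff_V(P))'=(\ECoeff_V(P))^{!}$. The paper's proof is precisely the chain $\ECoeff_{V'}(P')=\ECoeff_{V^{\curlyvee}}(P^{\curlyvee})=(\ECoeff_V(P))^{!}=(\ECoeff_V(P))'$, so your bookkeeping of weak, strict, and controlled transforms reproduces the intended argument exactly.
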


\begin{proof} By proposition \ref{conmutdebil} $$\ECoeff_{V'}(P')=\ECoeff_{V^{\curlyvee}}(P^{\curlyvee})=(\ECoeff_V(P))^{!}=
(\ECoeff_V(P))'. \vspace*{-0.5cm}$$
\end{proof}

The inductive step from dimension $i$ to dimension $i-1$ commutes with the blowing up.   

\begin{corollary} \label{conmut-1} 
Let $(W,(J,c),H,E)$ be a binomial basic object along $E$. By induction on the dimension, construct the binomial basic objects $$(W,(J,c),H,E)=(W_n,(J_n,c_{n+1}),H_n,E_n),\ldots,(W_i,(J_i,c_{i+1}),H_i,E_i).$$ Let $$(W_n',(J_n',c_{n+1}),H_n',E_n'),\ldots,(W_i',(J_i',c_{i+1}'),H_i',E_i')$$ be their transforms by the blow up $\pi$ along a permissible center $Z$. Let $\xi\in Z$ be a point. In the neighborhood of a point $\xi'\in W_i'$ such that $\pi(\xi')=\xi$, if the $E$-resolution function remains constant  $(t_n'(\xi'),t_{n-1}'(\xi'),\ldots,t_i'(\xi'))=(t_n(\xi),t_{n-1}(\xi),\ldots,t_i(\xi))$ then $$J_{i-1}'=(J_{i-1})'.$$
\end{corollary}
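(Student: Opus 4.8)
The plan is to recognize this corollary as the packaged form of the commutativity already isolated in Corollary \ref{transfcoef}, applied to the companion ideal $P_i$ and its hypersurface of $E$-maximal contact $W_{i-1}$. Recall that, by the construction in dimension $i-1$, the junior ideal is $J_{i-1}=\ECoeff_{W_{i-1}}(P_i)$ whenever this coefficient ideal is nonzero (and $J_{i-1}=1$ otherwise), where $W_{i-1}$ is a hypersurface of $E$-maximal contact for $P_i$ and the critical value $c_i=\Eord_\xi(P_i)$ is the integer used to form the coefficient ideal. On the blown-up side $J_{i-1}'$ is, by definition, $\ECoeff_{W_{i-1}'}(P_i')$ for the corresponding objects, while $(J_{i-1})'$ denotes its controlled transform; the goal is to match these two.

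First I would use the hypothesis. The assumption that the $E$-resolution function stays constant up to dimension $i$, namely $(t_n'(\xi'),\ldots,t_i'(\xi'))=(t_n(\xi),\ldots,t_i(\xi))$, places $\xi'$ in the stratum $T_i'$. By the identification $T_i'=O_i'$ and the constructions of paragraph \ref{commuta}, this yields $\theta_i'=\theta_i$, $c_{i+1}'=c_{i+1}$, and $P_i'=P_i^{\curlyvee}$ on $T_i'$. Since at points of maximal $E$-order one has $c_i=\Eord_\xi(P_i)=\theta_i$, the equality $\theta_i'=\theta_i$ forces the critical value to be preserved,
$$c_i'=\Eord_{\xi'}(P_i')=\Eord_\xi(P_i)=c_i=:c.$$

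Next I would pin down the hypersurface of $E$-maximal contact after blowing up. Because $\xi'\in T_i'$ and the maximal $E$-order of $P_i$ is unchanged, Lemma \ref{esta} ensures that the strict transform $W_{i-1}^{\curlyvee}$ of $W_{i-1}$ is again a hypersurface of $E$-maximal contact for $P_i'=P_i^{\curlyvee}$, so I would take $W_{i-1}'=W_{i-1}^{\curlyvee}$, exactly as in the constructions in dimension $i$ performed one dimension lower. This is the step I expect to be the crux, since it is precisely what makes the hypotheses of Corollary \ref{transfcoef} literally hold: that $V'=W_{i-1}'$ is the strict transform of $V=W_{i-1}$, and that $c'=c$. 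With these identifications the conclusion follows at once from Corollary \ref{transfcoef} applied to $P=P_i$, $V=W_{i-1}$ and the preserved value $c=c_i$:
$$J_{i-1}'=\ECoeff_{W_{i-1}'}(P_i')=\ECoeff_{W_{i-1}^{\curlyvee}}(P_i^{\curlyvee})=(\ECoeff_{W_{i-1}}(P_i))'=(J_{i-1})'.$$

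Finally I would clear the degenerate case. When $\ECoeff_{W_{i-1}}(P_i)=0$, Proposition \ref{boldreg} gives that $P_i$ is bold regular or equal to $1$; the case analysis of the constructions in dimension $i$ then shows $P_i'$ is likewise bold regular or $1$, unless the blow up drops the $E$-order and $\xi'$ leaves $T_i'$, which the hypothesis excludes. In either surviving case $J_{i-1}'=1=(J_{i-1})'$, completing the argument.
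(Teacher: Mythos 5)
Your proof is correct and takes essentially the same route as the paper's: the hypothesis forces $c_i'=c_i$ (with $P_i'=P_i^{\curlyvee}$), and the conclusion is then read off from Corollary \ref{transfcoef} applied to $P_i$ and its hypersurface of $E$-maximal contact. The paper's proof is terser — it does not spell out the $T_i'=O_i'$ identification, Lemma \ref{esta}, or the bold-regular degenerate case — but those are precisely the details it leaves implicit, so you have matched and slightly amplified its argument.
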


\begin{proof}
Let $V=\{x_j=0\}$ be a hypersurface of $E$-maximal contact for $P_i$ in a neighborhood of $\xi$.
If $(t_n'(\xi'),t_{n-1}'(\xi'),\ldots,t_i'(\xi'))=(t_n(\xi),t_{n-1}(\xi),\ldots,t_i(\xi))$ then $c_{i}'=c_{i}$ and $\ESing(P_i',c_i')\subseteq V'$. By definition $J_{i-1}=\ECoeff_V(P_i)$. By corollary \ref{transfcoef} $\ECoeff_{V'}(P_i')=(\ECoeff_V(P_i))'$, then 
$$J_{i-1}'=\ECoeff_{V'}(P_i')=(\ECoeff_V(P_i))'=(J_{i-1})'. \vspace*{-0.5cm}$$
\end{proof}

\subsection{Decrease of the $E$-resolution function} \label{decres}

In this section we prove that the $E$-resolution function drops lexicographically after blowing up.

\begin{lemma} \label{bajainv} Let $(W,(J,c),H,E)$ be a binomial basic object along $E$, where $J\neq 1$. Let $W \stackrel{\pi}{\leftarrow}W'$ be the blow up along $Z=\EMaxB(t)$. Then $$t(\xi)>t'(\xi')$$ for all $\xi\in Z$, $\xi'\in Y'=\pi^{-1}(Z)$, $\pi(\xi')=\xi$, where $t$ is the $E$-resolution function corresponding to $(W,(J,c),H,E)$ and $t'$ corresponds to $(W',(J',c),H',E')$, its transform by the blow up $\pi$.  
\end{lemma}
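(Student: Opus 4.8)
The plan is to compare $t(\xi)$ and $t'(\xi')$ coordinate by coordinate, from the top dimension downwards, and to locate the first coordinate at which a strict drop occurs; since the order is lexicographic, exhibiting such a coordinate together with equality in all higher coordinates suffices. The whole argument is meant to run parallel to the classical decrease proof of Encinas--Hauser \cite{strong}, the new ingredients being the monotonicity of the maximal $E$-order under blow up (Proposition \ref{dropblup} and its corollary), the commutativity of the coefficient-ideal-along-$E$ construction with blow up (Proposition \ref{conmutdebil}, Corollaries \ref{transfcoef} and \ref{conmut-1}), and the existence and stability of hypersurfaces of $E$-maximal contact (Lemmas \ref{hiper} and \ref{esta}).

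First I would treat the top coordinate. Here $t_n(\xi)=\theta_n/c_{n+1}$ with $c_{n+1}=c$ fixed and $\theta_n=\Eord_\xi(I_n)=\max\Eord(I_n)$ on $Z=\EMaxB(t)$. By the corollary to Proposition \ref{dropblup}, $\max\Eord(P_n^{\curlyvee})\le\max\Eord(P_n)$, hence $\theta_n'=\Eord_{\xi'}(I_n')\le\theta_n$ and $t_n'(\xi')\le t_n(\xi)$. If this inequality is strict the lexicographic drop is already proven. Otherwise $t_n'(\xi')=t_n(\xi)$, i.e. $\xi'\in T_n'$, and then the stability results of Section \ref{stabin} guarantee that the lower-dimensional data transforms compatibly: $W_{n-1}^{\curlyvee}$ is again a hypersurface of $E$-maximal contact for $P_n'$ (Lemma \ref{esta}), and by Corollary \ref{conmut-1} the junior ideal satisfies $J_{n-1}'=(J_{n-1})'$. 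This places the comparison of $t_{n-1}$ in exactly the same situation one dimension lower.

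Iterating, I would perform a descending induction: assuming $t_n'=t_n,\ldots,t_{i+1}'=t_{i+1}$, Corollary \ref{conmut-1} and the constructions of paragraph \ref{commuta} show that $I_i'=I_i^{\curlyvee}$, $M_i'$, $P_i'=P_i^{\curlyvee}$, and hence $J_{i-1}'=(J_{i-1})'$, are produced compatibly in dimension $i$, so that $t_i'(\xi')\le t_i(\xi)$ again by the monotonicity of $\max\Eord$. The descent must eventually produce a strict inequality, and I would split into the three types of Definition \ref{Einv}: where a component is $\infty$ because $J_i=1$, the hypersurface of $E$-maximal contact has already resolved $I_i$ at higher dimension and the drop has been located above; where $\theta_i=0$ and the component is the monomial value $\Gamma$, the strict decrease is the classical decrease of $\Gamma$ in the monomial case (\cite{course}), valid because $\ESing(J_i,c)=\Sing(J_i,c)$ there; and where all $\theta_j>0$ stay constant, the controlled-transform factor $I(Y')^{\theta-c}$ together with the chart-by-chart strict decay of the distinguished maximal-contact generator established inside the proof of Proposition \ref{dropblup} forces $\theta_j'<\theta_j$, hence $t_j'<t_j$, at the bottom of the flag. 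Choosing $j$ maximal with $t_j'(\xi')<t_j(\xi)$ and equality above then yields $t(\xi)>t'(\xi')$.

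The main obstacle I expect is precisely proving that the descent cannot remain an equality through all $n$ coordinates, i.e. genuinely exhibiting the strict drop. This forces one to reconcile two facts that individually are only inequalities: the global monotonicity $\max\Eord(P)\ge\max\Eord(P^{\curlyvee})$ (Proposition \ref{dropblup}) and the strict chart-wise decay of the single generator of $E$-maximal contact, while simultaneously keeping exact track of the exceptional bookkeeping---the divisors $D_i$, the monomial factors $M_i$, and the controlled-transform exponent $\theta-c$---so that the $M_i^{\theta_i/(c_{i+1}-\theta_i)}$ contribution to the companion ideal and the transition into the monomial $\Gamma$-regime are handled without gaps. Verifying that the equivariant, combinatorial nature of the centers makes these match the classical bookkeeping of \cite{strong} verbatim is the technical heart of the argument.
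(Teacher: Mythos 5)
Your coordinatewise argument for the weak inequality $t(\xi)\ge t'(\xi')$ (Proposition \ref{dropblup} for each $t_i$, plus Corollary \ref{conmut-1} and the stability results to propagate equality downward) matches the paper's Step 1. But the strictness, which you yourself flag as ``the main obstacle I expect,'' is a genuine gap, and the mechanism you propose to close it does not work. You appeal to ``the chart-by-chart strict decay of the distinguished maximal-contact generator established inside the proof of Proposition \ref{dropblup}''; however, that proof establishes strict decay only in the $j$-th charts with $\alpha_j>0$, while in the $(k+j)$-th charts with $\beta_{k+j}>0$ it yields only $\Eord_{\xi'}(f^{\curlyvee})\le|\alpha|$ --- and these are precisely the charts in which the maximal $E$-order (and hence $\theta_j$) can remain constant; indeed the proof of Lemma \ref{hiper} relies on exactly this persistence. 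So Proposition \ref{dropblup} cannot force $\theta_j'<\theta_j$ in any dimension, and your case analysis never actually rules out equality in all $n$ coordinates.

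The paper closes this gap by a second induction, on the dimension of the ambient space, anchored in an explicit one-variable computation: in dimension $1$ a binomial ideal has the form $J_1=\langle x_1^{\theta}(1-x_1^{\alpha-\theta})\rangle$ (or $\langle x_1^{\theta_1}\rangle$), and after the transformation the controlled transform is $\langle x_1^{\theta-c}(1-x_1^{\alpha-\theta})\rangle$, whose non-monomial factor is a unit at every point over the center; hence $\theta_1'=0<\theta_1$ (and $I'=1$ in the monomial case). Equality $\theta_1=\theta_1'$ is therefore impossible at the bottom coordinate $j=1$, which is exactly what converts the chain of weak inequalities into a strict lexicographic drop: if $\theta_j=\theta_j'$ held for all $j$ down to $j=1$, the dimension-$1$ data would transform compatibly and contradict this computation. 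The paper must also check that the residual case $\Eord_{\xi}(I_1)=0$ with $\xi_1\neq 0$ cannot arise as a junior ideal from higher dimension (it shows $\ECoeff_{\{x_2=0\}}(P_2)$ would then vanish, a contradiction) --- a point your sketch does not address at all. Without this dimension-$1$ anchor, or some equivalent replacement, your argument establishes only $t(\xi)\ge t'(\xi')$, not the strict inequality claimed by the lemma.
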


\begin{proof} \ 
\begin{itemize}
	\item[\underline{Step 1}]: The $E$-resolution function satisfies $t(\xi)\geq t'(\xi')$.
\medskip

Let $Z=\cap_{i\in \mathcal{I}}\{x_i=0\}$ be the next center of blowing up, with $\mathcal{I}\subseteq \{1,\ldots,n\}$. 

By definition of the $E$-resolution function $t_n(\xi)=\frac{\Eord_{\xi}(I_n)}{c}$ and $t_n'(\xi')=\frac{\Eord_{\xi'}(I_n')}{c}$.  

Proposition \ref{dropblup} and remark \ref{pprima} imply $\Eord_{\xi}(I_n)\geq \Eord_{\xi'}(I_n')$ therefore $t_n(\xi)\geq t_n'(\xi')$.
\begin{itemize}
  \item If $t_n(\xi)>t_n'(\xi')$ then $t(\xi)\geq t'(\xi')$, in fact $t(\xi)>t'(\xi')$.
	\item If $t_n(\xi)=t_n'(\xi')$ then $I_{n-1}'=I_{n-1}^{\curlyvee}$. So $t_{n-1}(\xi)\geq t_{n-1}'(\xi')$ by proposition \ref{dropblup}.  
\end{itemize} 
Hence $(t_n(\xi),t_{n-1}(\xi))\geq (t_n'(\xi'),t_{n-1}'(\xi'))$. By induction on the dimension, if $$(t_n(\xi),t_{n-1}(\xi),\ldots,t_{j+1}(\xi))=(t_n'(\xi'),t_{n-1}'(\xi'),\ldots,t_{j+1}'(\xi'))$$ then $t_j(\xi)\geq t_j'(\xi')$ so $t(\xi)\geq t'(\xi')$.

\begin{itemize}
	\item Non monomial case: By construction the $E$-resolution function satisfies $$(t_n(\xi),t_{n-1}(\xi),\ldots,t_i(\xi))=\left(\frac{\theta_{n}}{c_{n+1}},\frac{\theta_{n-1}}{c_n},\frac{\theta_{n-2}}{c_{n-1}},\ldots,\frac{\theta_{i}}{c_{i+1}}\right)$$ where $c_{j}=\theta_{j}$, for $n\geq j\geq i+1$.
	
If $(t_n'(\xi'),t_{n-1}'(\xi'),\ldots,t_i'(\xi'))=(t_n(\xi),t_{n-1}(\xi),\ldots,t_i(\xi))$ then  $c_{i}=\theta_{i}=\theta_{i}'=c_i'$ whereas $\theta_{i}\neq 0$. So it is enough to show $\theta_{i-1}\geq \theta_{i-1}'$ to obtain $t_{i-1}(\xi)\geq t_{i-1}'(\xi')$.	
	\item Monomial case: If $(t_n'(\xi'),t_{n-1}'(\xi'),\ldots,t_i'(\xi'))=(t_n(\xi),t_{n-1}(\xi),\ldots,t_i(\xi))$ and $\theta_{i-1}=0$ then $t_{i-1}(\xi)=\Gamma(\xi)$. Therefore $t_{i-1}(\xi)\geq t_{i-1}'(\xi')$ by well known properties of $\Gamma$ function.  
\end{itemize}

\item[\underline{Step 2}]: Induction on the dimension of the ambient space $W$. 
\par
Assume $n=1$.  Let $J=I=<x_1^{\theta}(1-x_1^{\alpha-\theta})>$ be a binomial ideal in one variable. 
\begin{itemize}
 \item If $\xi_1=0$ then in a neighborhood of $\xi$, $\Eord_{\xi}(I)=\theta$. The controlled transform of $J$ by $\pi$ is  $J'=<(x_1^{\theta-c})\cdot(1-x_1^{\alpha-\theta})>$ therefore $\Eord_{\xi'}(I')=0<\theta$.
 \item If $\xi_1\neq 0$ then in a neighborhood of $\xi$, $\Eord_{\xi}(I)=0$ so we use $\Gamma$ function. 
 
This case can not come from an ideal in higher dimension. If $J=\ECoeff_{\{x_2=0\}}(P_2)$ then  $P_2=<x_2^{\theta_2}x_1^{\theta}(1-x_1^{\alpha-\theta}),f_2,\ldots,f_r>$ where the variable $x_2$ appears in $f_i$, for $2\leq i\leq r$, with an exponent bigger than or equal to $\Eord_{\xi}(P_2)$. Since $\xi_1\neq 0$ we have  $\Eord_{\xi}(P_2)=\Eord_{\xi}(x_2^{\theta_2})=\theta_2$, but in this case $\ECoeff_{\{x_2=0\}}(P_2)=0$, contradiction.   
\end{itemize}

If $J=<x^{\theta_1}>$ it is enough to note $J'=<(x^{\theta_1-c})>=M'$ with $I'=1$. So $\Eord_{\xi}(I)=\theta_1> \Eord_{\xi'}(I')=0$. 

\par	
Study the case $n>1$. If $(t_n'(\xi'),t_{n-1}'(\xi'),\ldots,t_{j+1}'(\xi'))=(t_n(\xi),t_{n-1}(\xi),\ldots,t_{j+1}(\xi))$ then $t_j(\xi)\geq t_j'(\xi')$ by step $1$. Therefore	
\begin{itemize}
	\item either $\theta_{j}>\theta_{j}'$
	\item or $\theta_{j}=\theta_{j}'$. Observe that this is not possible when $j=1$, hence $t(\xi)>t'(\xi')$.
\end{itemize}
\end{itemize} \vspace*{-0.5cm}
\end{proof}

\section{Algorithm of $E$-resolution of BBOE} \label{algresbin} 

In this section we construct an algorithm of $E$-resolution of binomial basic objects along $E$. 

\subsection{$E$-resolution of BBOE: Inductive step} \label{resind}

A $E$-resolution of binomial basic objects along $E$ in dimension $n-1$ provides a $E$-resolution of binomial basic objects along $E$ in dimension $n$. 

\begin{proposition} \label{saltodim} Let $(W^{(0)},(J^{(0)},c),H^{(0)},E^{(0)})$ be a binomial basic object along $E^{(0)}$ of dimension $n$. Let $$(W^{(0)},(J^{(0)},c),H^{(0)},E^{(0)})\stackrel{\pi_1}{\longleftarrow}\ldots\stackrel{\pi_r}{\longleftarrow}(W^{(r)},(J^{(r)},c),H^{(r)},E^{(r)})\stackrel{\pi_{r+1}}{\longleftarrow}\ldots$$ be an equivariant sequence of transformations of binomial basic objects along combinatorial centers $Z^{(k)}\subset \ESing(J^{(k)},c)$ for $k\geq 0$. 

Let $\mathcal{I}_n$ be a totally ordered set. Let $t^{(k)}: \ESing(J^{(k)},c)\rightarrow \mathcal{I}_n$ be an equivariant function defined as in \ref{Einv} such that $Z^{(k)}=\EMaxB t^{(k)}$, for $k\geq 0$. 

Let $r_0<r$ be the last superscript where $t_n$ has dropped. This means there exists a sequence of transformations  \begin{equation} \label{suc}
\ldots\stackrel{\pi_{r_0}}{\longleftarrow} (W_{n}^{(r_0)},(J_{n}^{(r_0)},c),H_{n}^{(r_0)},E_{n}^{(r_0)})\stackrel{\pi_{r_0+1}}{\longleftarrow}\ldots \stackrel{\pi_r}{\longleftarrow}(W_{n}^{(r)},(J_{n}^{(r)},c),H_{n}^{(r)},E_{n}^{(r)})
\end{equation} 
such that $\EMaxB t_n^{(k)}=\ESing(J_{n-1}^{(k)},c_{n}^{(k)})$ for $r_0\leq k\leq r$, satisfying
$$max\ t_n^{(r_0-1)}>max\ t_n^{(r_0)}=max\ t_n^{(r_0+1)}=\ldots=max\ t_n^{(r)}.$$  
They are equivalent: 
\begin{enumerate}
	\item $max\ t_n^{(r)}>max\ t_n^{(r+1)}$.
	\item $\ESing((J_{n-1}^{(r)})',c_{n}^{(r_0)})=\emptyset$.
\end{enumerate}
\end{proposition}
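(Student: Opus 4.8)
The plan is to collapse the stated equivalence into a single set-theoretic identity on $W^{(r+1)}$ and then read both conditions off of it. First I would record the reductions available on the constant range $[r_0,r]$. Since $\max t_n$ is unchanged there, the relation $c_k=\Eord(P_k)=\Eord(I_k)=\theta_k$ at points of maximal $E$-order (already used in the proof that $T_{j+1}'=O_{j+1}'$) forces the critical value in dimension $n-1$ to stay constant, $c_n^{(r_0)}=\cdots=c_n^{(r)}=:c_n$, with $c_n=\theta c$ where $\theta:=\max t_n^{(r)}$. By Lemma \ref{esta} the hypersurfaces of $E$-maximal contact $W_{n-1}^{(k)}$ are stable along this range, so $J_{n-1}^{(k)}=\ECoeff_{W_{n-1}^{(k)}}(P_n^{(k)})$ throughout and the subsequence (\ref{suc}) is literally a sequence of transformations of the $(n-1)$-dimensional binomial basic object $(W_{n-1}^{(r_0)},(J_{n-1}^{(r_0)},c_n),\ldots)$.

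Set $A:=\{\xi'\in W^{(r+1)}\mid t_n^{(r+1)}(\xi')=\theta\}$, the locus where the top coordinate is preserved. By monotonicity $\max t_n^{(r+1)}\le\theta$ (Proposition \ref{dropblup} and Remark \ref{pprima}), so $A\neq\emptyset$ precisely when $\max t_n^{(r+1)}=\theta$; that is, $A=\emptyset$ is exactly condition (1). The whole proposition then reduces to the identity
$$A=\ESing\big((J_{n-1}^{(r)})',\,c_n\big),$$
whose right-hand side is empty exactly when (2) holds.

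The inclusion $A\subseteq\ESing((J_{n-1}^{(r)})',c_n)$, which yields $(2)\Rightarrow(1)$, I would prove pointwise: on $A$ the $E$-order of $P_n$ is preserved, so Corollary \ref{transfcoef} gives $(J_{n-1}^{(r)})'=\ECoeff_{W_{n-1}^{(r+1)}}(P_n^{(r+1)})$ there (equivalently, Corollary \ref{conmut-1} with $i=n$ gives $(J_{n-1}^{(r)})'=J_{n-1}^{(r+1)}$ since $t_n$ is constant), and Proposition \ref{topes} identifies $\Etop(P_n^{(r+1)})=A$ with $\ESing(\ECoeff_{W_{n-1}^{(r+1)}}(P_n^{(r+1)}),c_n)$. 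In fact the same chain, applied at stage $r+1$, shows this inclusion is onto $A$: if $A\neq\emptyset$ then $\max t_n^{(r+1)}=\theta>0$ is attained, so $\EMaxB t_n^{(r+1)}=\Etop(P_n^{(r+1)})=A$ carries the next center $\EMaxB t^{(r+1)}\subseteq A$ and is therefore nonempty, giving $\ESing((J_{n-1}^{(r)})',c_n)\neq\emptyset$.

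The reverse inclusion $\ESing((J_{n-1}^{(r)})',c_n)\subseteq A$, needed for $(1)\Rightarrow(2)$, is the main obstacle: I must rule out that the controlled transform of the junior ideal spuriously retains $E$-order $\ge c_n$ at a point $\eta$ where the top $E$-order of $P_n^{(r+1)}$ has strictly dropped below $c_n$. The listed results give only the inequality $\Eord(\ECoeff)\ge\Eord(P)$ (Proposition \ref{Ecoeford}) and its commutation at order-preserving points (Propositions \ref{conmutdebil}, \ref{transfcoef}), not the converse, so the step cannot be purely formal. The plan is to exploit that $(J_{n-1}^{(r)})'$ lives on $W_{n-1}^{(r+1)}=(W_{n-1}^{(r)})^{\curlyvee}$ and that its control $c_n$ is calibrated to the top $E$-order, so a drop in $\Eord(P_n^{\curlyvee})$ must propagate to a strict drop in $\Eord((J_{n-1}^{(r)})')$. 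Concretely I would run the binomial bookkeeping of the proofs of Proposition \ref{dropblup} and Lemma \ref{hiper} on a generator $f=y^{\gamma}x^{\alpha}-bx^{\beta}$ of $P_n$ realizing the maximal-contact hypersurface $\{x_i=0\}$, with $|\alpha|=c_n$: in a chart where $\alpha_j>0$ one has $\Eord_\eta(f^{\curlyvee})=|\alpha|-\alpha_j<c_n$, and since the exceptional factor absorbs exactly the control $c_n$, the matching coefficient of the Taylor expansion of $(J_{n-1}^{(r)})'$ along $W_{n-1}^{(r+1)}$ drops below the threshold, forcing $\Eord_\eta((J_{n-1}^{(r)})')<c_n$. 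This confines $\ESing((J_{n-1}^{(r)})',c_n)$ to $A$, completes the identity, and hence proves both implications.
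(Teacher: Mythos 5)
Your reduction of the equivalence to the single identity $A=\ESing((J_{n-1}^{(r)})',c_n)$ is legitimate, and your proof of the inclusion $A\subseteq\ESing((J_{n-1}^{(r)})',c_n)$ (which yields the implication from the negation of (1) to the negation of (2)) follows the same route as the paper: Corollary \ref{conmut-1} to identify $(J_{n-1}^{(r)})'$ with $J_{n-1}^{(r+1)}$ where $t_n$ is preserved, and Proposition \ref{topes} to identify $\Etop(P_n^{(r+1)})$ with $\ESing(J_{n-1}^{(r+1)},c_n)$. Note, however, that the identification $A=\Etop(P_n^{(r+1)})$, which you use as if it were immediate, is not: $t_n$ is defined through $I_n^{(r+1)}$, not through $P_n^{(r+1)}$, and the paper spends the displayed equalities (\ref{formP}) and (\ref{singP}) on exactly this point, using the factorization $J_n^{(r+1)}=M_n^{(r+1)}\cdot I_n^{(r+1)}$; your write-up would need to reproduce that argument.

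The genuine gap is the reverse inclusion $\ESing((J_{n-1}^{(r)})',c_n)\subseteq A$, equivalently the implication (1)$\Rightarrow$(2), which you yourself call ``the main obstacle'' and then only sketch. What you verify is correct but is a single case: for the one generator $f=y^{\gamma}x^{\alpha}-bx^{\beta}$ realizing the maximal contact hypersurface $\{x_i=0\}$, in a chart where one divides by a variable $x_j$ with $\alpha_j>0$, at the origin of that chart, the controlled transform of the coefficient $(y^{\gamma}x^{\alpha-\alpha_ie_i})^{c_n/(c_n-\alpha_i)}$ has $E$-order $c_n-\frac{\alpha_j c_n}{c_n-\alpha_i}<c_n$ (when $\alpha_i<c_n$). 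But what must be proved is that at \emph{every} point $\eta$ of $W_{n-1}^{(r+1)}$ where $t_n^{(r+1)}$ has dropped, \emph{some} element of $(J_{n-1}^{(r)})'$ has $E$-order $<c_n$ at $\eta$, and the drop at $\eta$ need not be witnessed by your chosen $f$: in a chart where one divides by a variable $x_{k+j}$ with $\beta_{k+j}>0$, Proposition \ref{dropblup} only gives $\Eord_{\eta}(f^{\curlyvee})\leq|\alpha|$, so the $E$-order of $f^{\curlyvee}$, hence of each of its coefficient contributions, can remain equal to $c_n$ while $\Eord_{\eta}(I_n^{(r+1)})$ drops because of other generators of $P_n^{(r)}$, or because of the factor $M_n$, which your argument for this direction never touches. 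The case analysis over all charts, over points of the exceptional divisor other than chart origins, over all generators, and the $M_n$/$I_n$ bookkeeping is exactly where the content of this implication lies, and it is absent. For contrast, the paper's own proof of this direction is not a chart computation at all: it compares critical values, observing that when $\max t_n$ drops the junior ideal at stage $r+1$ is built with the strictly smaller value $c_n^{(r+1)}<c_n^{(r_0)}$, and that the $E$-order of $(J_{n-1}^{(r)})'$ on $\ESing(J_{n-1}^{(r+1)},c_n^{(r+1)})$ is capped by $c_n^{(r+1)}$, which forces $\ESing((J_{n-1}^{(r)})',c_n^{(r_0)})=\emptyset$.
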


\begin{proof} By induction on the dimension the sequence (\ref{suc}) of transformations of BBOE in dimension $n$ induces a sequence of transformations of BBOE in dimension $n-1$ $$(W_{n-1}^{(r_0)},(J_{n-1}^{(r_0)},c_n^{(r_0)}),H_{n-1}^{(r_0)},E_{n-1}^{(r_0)})\stackrel{\pi_{r_0+1}}{\longleftarrow}\ldots \stackrel{\pi_r}{\longleftarrow} (W_{n-1}^{(r)},(J_{n-1}^{(r)},c_n^{(r)}),H_{n-1}^{(r)},E_{n-1}^{(r)})$$
Since $max\ t_n^{(r_0)}=max\ t_n^{(r_0+1)}=\ldots=max\ t_n^{(r)}$ we have $c_n^{(r_0)}=c_n^{(r_0+1)}=\ldots=c_n^{(r)}$. By corollary \ref{conmut-1} we obtain $J_{n-1}^{(r_0+1)}=(J_{n-1}^{(r_0)})',\ldots,J_{n-1}^{(r)}=(J_{n-1}^{(r-1)})'$.

On the other hand, lemma \ref{bajainv} implies $$max\ (t_{n-1}^{(r_0)},\ldots,t_{1}^{(r_0)})>max\ (t_{n-1}^{(r_0+1)},\ldots,t_{1}^{(r_0+1)})>\ldots>max\ (t_{n-1}^{(r)},\ldots,t_{1}^{(r)}).$$

\begin{itemize}
\item Assume $max\ t_n^{(r)}>max\ t_n^{(r+1)}$ then $c_n^{(r_0)}=c_n^{(r)}>c_n^{(r+1)}$, where $$c_n^{(r+1)}=max\ \Eord(P_n^{(r+1)})=max\ \Eord(I_n^{(r+1)}).$$ Hence $J_{n-1}^{(r+1)}=\ECoeff_{W_{n-1}^{(r+1)}}(P_n^{(r+1)})\neq (J_{n-1}^{(r)})'$. Therefore $$\hspace*{-1cm} \ESing((J_{n-1}^{(r)})',c_{n}^{(r_0)})=\{\xi\in \ESing((J_{n-1}^{(r+1)}),c_{n}^{(r+1)})|\ \Eord_{\xi}((J_{n-1}^{(r)})')\geq c_n^{(r_0)}\}$$ $$\hspace*{1.6cm}=\{\xi\in \ESing((J_{n-1}^{(r+1)}),c_{n}^{(r+1)})|\ c_{n}^{(r+1)}\geq c_n^{(r_0)}\}=\emptyset.$$ 
	
\item Conversely, suppose $max\ t_n^{(r)}=max\ t_n^{(r+1)}$ then $c_n^{(r_0)}=c_n^{(r)}=c_n^{(r+1)}$ and corollary  \ref{conmut-1} implies $J_{n-1}^{(r+1)}=(J_{n-1}^{(r)})'$.
	
By construction, if $P_n^{(r+1)}=I_n^{(r+1)}+(M_n^{(r+1)})^{\frac{c_n^{(r+1)}}{c_{n+1}^{(r+1)}-c_n^{(r+1)}}}$ with $c_{n+1}^{(r+1)}=c$, then {\small \begin{equation} \hspace*{-1cm} \ESing(P_n^{(r+1)},c_n^{(r+1)})=\ESing(I_n^{(r+1)},c_n^{(r+1)})\cap \ESing(M_n^{(r+1)},c_{n+1}^{(r+1)}-c_n^{(r+1)}) \label{formP} \end{equation}} where $c_n^{(r+1)}=max\ \Eord(P_n^{(r+1)})=max\ \Eord(I_n^{(r+1)})$.

In fact {\small	
\begin{equation} \ESing(P_n^{(r+1)},c_n^{(r+1)})=\ESing(I_n^{(r+1)},c_n^{(r+1)})\cap \ESing(J_n^{(r+1)},c_{n+1}^{(r+1)}). \label{singP} \end{equation}}	

\begin{itemize}
	\item[] One direction is obvious by equality (\ref{formP}). For the other direction, let $\xi\in W^{(r+1)}$ be a point such that $\Eord_{\xi}(J_n^{(r+1)})\geq c_{n+1}^{(r+1)}$ and $\Eord_{\xi}(I_n^{(r+1)})=c_n^{(r+1)}$ then $$\Eord_{\xi}(M_n^{(r+1)})=\Eord_{\xi}(J_n^{(r+1)})-\Eord_{\xi}(I_n^{(r+1)})\geq c_{n+1}^{(r+1)}-c_n^{(r+1)}.$$ So $\xi\in \ESing(P_n^{(r+1)},c_n^{(r+1)})$ because of (\ref{formP}).
\end{itemize}
As a consequence of (\ref{singP}) it holds	$$\hspace*{-1cm} \begin{array}{rl} \EMaxB t_n^{(r+1)} & \hspace*{-0.2cm} =\{\xi\in \ESing(J_n^{(r+1)},c_{n+1}^{(r+1)})|\ t_n^{(r+1)}(\xi)\!=\!max\ t_n^{(r+1)}\} \\ & \hspace*{-0.2cm} =\{\xi\in \ESing(J_n^{(r+1)},c_{n+1}^{(r+1)})|\ \Eord_{\xi}(I_n^{(r+1)})\!=\!max\ \Eord(I_n^{(r+1)})\}\\ & \hspace*{-0.2cm} =\ESing(P_n^{(r+1)},c_n^{(r+1)}).\end{array}$$

By proposition \ref{topes} $$\Etop(P_n^{(r+1)})=\Etop(\ECoeff_{W_{n-1}^{(r+1)}}(P_n^{(r+1)}),c_n^{(r+1)})= \Etop(J_{n-1}^{(r+1)},c_n^{(r+1)})$$ then $$\hspace*{-1cm} \EMaxB t_n^{(r+1)}=\Etop(P_n^{(r+1)})=\ESing(J_{n-1}^{(r+1)},c_{n}^{(r+1)})=\ESing((J_{n-1}^{(r)})',c_{n}^{(r+1)})$$ contradiction with $\ESing((J_{n-1}^{(r)})',c_{n}^{(r_0)})=\emptyset$.
\end{itemize}
\end{proof}

\subsection{$E$-resolution of BBOE: Algorithm} \label{algbin} 

\begin{definition} Let $(W,(P,c),H,E)$ be a binomial basic object along $E$. Denote $$\EMaxB(P)=\{\xi\in W|\ \Eord_{\xi}(P)=max\ \Eord(P)\}$$ the set of points where the ideal $P$ attains its maximal $E$-order. 
\end{definition}

\begin{remark} Note that $\EMaxB(P)=S_{P,E}(\xi)$ for $\xi\in W$ a point of maximal $E$-order. 
\end{remark}

The special treatment of hyperbolic equations forces us to specify how to make induction on the dimension on the ambient space to construct the $E$-resolution function $t$.

\begin{algorithm} {\bf Induction on the dimension.}
\medskip 

Let $(W,(J,c),H,E)$ be a binomial basic object along $E$, $W=W_n$, $dim(W_n)=n$. 
If $\ESing(J,c)\neq \emptyset$, assume $(t_n,\ldots,t_{i+1})$ and ideals $J_n,\ldots,J_{i+1},J_i$ are already given and construct $t_i$ and $J_{i-1}$. 

From $i=n$ to $i=1$: 
\begin{enumerate}
	\item Factorize $J_i=M_i\cdot I_i$ where $M_i$ has support in $D_i$. 
\item Compute $max\ \Eord(I_i)$. Let $\xi\in \Etop(I_i)$ be a point
\begin{itemize}
	\item If $\Eord_{\xi}(I_i)>0$, $t_i(\xi)=\frac{\Eord_{\xi}(I_i)}{\Eord_{\xi}(P_{i+1})}$. Compute the companion ideal $P_i$ and go to step $3$.
	\item If $\Eord_{\xi}(I_i)=0$ for all $\xi\in W_i$ then apply $\Gamma$ function to $M_i$,
	$(t_i(\xi),t_{i-1}(\xi),\ldots,t_1(\xi))=(\Gamma(\xi),\infty,\ldots,\infty)$. Stop.	
\end{itemize}
	\item Fix $c_{i}=max\ \Eord(P_i)$. Let $f\in P_i$ be a binomial such that $\Eord_{\xi}(f)=c_{i}$, $\xi\in W_i$. Corollary \ref{esing} provides a hypersurface $V$ of $E$-maximal contact for $P_i$ in a neighborhood of $\xi$. In fact,  $V=\{x_j=0\}$ for some $1\leq j\leq n$. 
	\item Compute $\ECoeff(P_i)$ respect to $V\in H_i^{-}$, to construct $J_{i-1}$ in $W_{i-1}$ where $dim(W_{i-1})=i-1$.
\begin{itemize}
	\item	If $\ECoeff_V(P_i)=0$ then $P_i$ is bold regular, $J_{i-1}=1$, $(t_{i-1}(\xi),\ldots,t_1(\xi))=(\infty,\ldots,\infty)$. Stop. 
	\item If $\ECoeff_V(P_i)\neq 0$ then $J_{i-1}=\ECoeff_V(P_i)$ and go to step $1$.
\end{itemize}	
\end{enumerate}
\end{algorithm}

\begin{remark}
If $\Eord_{\xi}(I)=0$ for all $\xi\in W$ but there exists at least a point $a\in W$ with $ord_a(I)\neq 0$ then there are hyperbolic equations in $I$.   
\begin{itemize}
	\item Note that in dimension $<n$, this kind of ideals $I$ are considered $I=1$. In step (2) of the algorithm, $\Gamma$ function is applied since $\Eord_{\xi}(J)=\Eord_{\xi}(M)$. With respect to the $E$-order, $J$ behaves as a monomial ideal. 

\item In dimension $n$, apply $\Gamma$ function to $J_n$. Stop when $\ESing(J_n,c)=\emptyset$. 
\end{itemize}
\end{remark}

\begin{theorem} {\bf $E$-resolution of binomial basic objects along $E$.} \label{resobb}
\medskip

An algorithm of $E$-resolution of binomial basic objects of dimension $n$ along a normal crossing divisor $E$ consist of:

\begin{itemize}
	\item[A)] A totally ordered set $(\mathcal{I}_n,\leq)$.
	\item[B)] For each BBOE $(W^{(0)},(J^{(0)},c),H^{(0)},E^{(0)})$ where $dim(W^{(0)})=n$, $J^{(0)}=M^{(0)}\cdot I^{(0)}$ and the ideal $I^{(0)}$ does not contain hyperbolic equations:
\begin{enumerate}
	\item Define an equivariant function $t^{(0)}: \ESing(J^{(0)},c)\rightarrow \mathcal{I}_n$ such that $$\EMaxB t^{(0)}\subset \ESing(J^{(0)},c)$$ is a permissible center for $(W^{(0)},(J^{(0)},c),H^{(0)},E^{(0)})$.  
	\item By induction, assume there exists an equivariant sequence of transformations of BBOE 
\begin{eqnarray}	
(W^{(0)},(J^{(0)},c),H^{(0)},E^{(0)}) \stackrel{\pi_1}{\longleftarrow}\ldots \hspace*{7cm}
\nonumber \\  \ldots\stackrel{\pi_{r-1}}{\longleftarrow} (W^{(r-1)},(J^{(r-1)},c),H^{(r-1)},E^{(r-1)})\stackrel{\pi_r}{\longleftarrow}(W^{(r)},(J^{(r)},c),H^{(r)},E^{(r)})
\label{seq}
\end{eqnarray}
along centers $Z^{(k)}\subset \ESing(J^{(k)},c)$ for $0\leq k\leq r-1$; and equivariant functions $$t^{(k)}: \ESing(J^{(k)},c)\rightarrow \mathcal{I}_n$$ for $0\leq k\leq r-1$, such that $Z^{(k)}=\EMaxB t^{(k)}$. 
	
If $\ESing(J^{(r)},c)\neq \emptyset$ this sequence of transformations can be extended. This means at the $r$-th stage of the $E$-resolution process an equivariant function can be defined $$t^{(r)}: \ESing(J^{(r)},c)\rightarrow \mathcal{I}_n$$ such that $Z^{(r)}=\EMaxB t^{(r)}$ is a permissible center for $(W^{(r)},(J^{(r)},c),H^{(r)},E^{(r)})$.
\end{enumerate}
	\item[C)] For some $r$, the previous sequence of transformations (\ref{seq}) is a $E$-resolution of the original BBOE $(W^{(0)},(J^{(0)},c),H^{(0)},E^{(0)})$, that is, $\ESing(J^{(r)},c)=\emptyset$.
\end{itemize}
\end{theorem}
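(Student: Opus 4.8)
The plan is to establish the three parts by a double induction: an outer induction on the dimension $n$ of the ambient space, and, within a fixed dimension, an induction on the finitely many values taken by $\max\ t_n$. For part (A) I take $\mathcal{I}_n$ to be $(\mathbb{Q}\cup\{\infty\})^n$ ordered lexicographically; at each stage the image of $t^{(k)}$ is finite, so this restricts to a totally ordered set on which $\EMaxB$ is meaningful.

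For part (B) the definition of $t^{(0)}$ is exactly Definition~\ref{Einv}: using Lemma~\ref{hiper} I choose at each dimension $i$ a coordinate hypersurface $V=\{x_j=0\}$ of $E$-maximal contact for the companion ideal $P_i$, form the junior ideal $J_{i-1}=\ECoeff_V(P_i)$ in $W_{i-1}$, and read off the components $t_i=\theta_i/c_{i+1}$ (or $\Gamma$ in the monomial case, or $\infty$ when $J_i=1$). Since each component $t_i$ is upper semi-continuous by Proposition~\ref{usc}, the function $t^{(0)}$ is upper semi-continuous and its top locus $\EMaxB\,t^{(0)}$ is closed; by the remark following Definition~\ref{Einv} it equals $\cap_{i\in\mathcal{I}}\{x_i=0\}$, a combinatorial subset of $\ESing(J^{(0)},c)$, hence permissible. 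The identical construction at the $r$-th stage produces $t^{(r)}$ whenever $\ESing(J^{(r)},c)\neq\emptyset$, and equivariance is inherited from Proposition~\ref{equitoro}; this yields the extension claimed in (B2).

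For part (C), termination, I argue by induction on $n$. The base case is dimension one, and within any dimension the monomial and hyperbolic case, where the ideal behaves like a monomial ideal for the $E$-order and the component $\Gamma$ resolves it in finitely many steps by the known properties of the monomial resolution function. For the inductive step I assume $E$-resolution terminates in dimension $n-1$. Fixing an era in which $\max\ t_n$ is constant, Proposition~\ref{saltodim} shows that the induced dimension-$(n-1)$ sequence is an $E$-resolution \emph{precisely} when $\max\ t_n$ drops at the end of that era; by the dimension induction hypothesis the dimension-$(n-1)$ object is resolved after finitely many blow ups, so $\max\ t_n$ must drop. Since $\max\ t_n=\theta_n/c$ with $c$ fixed and $\theta_n$ a non-negative integer that never increases by the corollary to Proposition~\ref{dropblup}, the value $\max\ t_n$ ranges over a finite set and therefore drops only finitely often; once it reaches $0$ we are in the monomial case resolved by $\Gamma$. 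Combining, after finitely many blow ups $\ESing(J^{(r)},c)=\emptyset$.

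The main obstacle I expect lies in the bookkeeping of part (C): threading the two inductions so that each era of constant $\max\ t_n$ is finite, via the dimension-$(n-1)$ hypothesis together with Proposition~\ref{saltodim}, while the number of eras is itself finite, via the boundedness of $\theta_n$. This requires that Lemma~\ref{bajainv} genuinely forces the full vector $t$ to drop strictly at every single blow up, so that no infinite sequence with constant invariant can occur. The delicate point that makes the base of the within-dimension induction work is the treatment of the hyperbolic equations, which carry $E$-order zero and must be shunted into the monomial $\Gamma$-procedure rather than contributing to the components $t_i$; this is precisely where the hypothesis that $I^{(0)}$ contains no hyperbolic equations is used.
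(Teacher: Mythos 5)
Your proposal reproduces the paper's own proof almost step for step: the same double induction (on the ambient dimension $n$ via a recursively built invariant, and inside a fixed dimension on eras of constant $\max\ t_n$), the same ingredients for part (B) (Lemma \ref{hiper} and Definition \ref{Einv} to build $t^{(r)}$, Proposition \ref{usc} for semicontinuity, Proposition \ref{equitoro} for equivariance), and the same termination mechanism for part (C): Proposition \ref{saltodim} converts the dimension-$(n-1)$ induction hypothesis into a strict drop of the leading component, which lies in $\frac{1}{c}\mathbb{N}$ and hence can only drop finitely often, with the monomial function $\Gamma$ absorbing the hyperbolic/monomial residue.

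There is, however, one concrete defect. You take $\mathcal{I}_n=(\mathbb{Q}\cup\{\infty\})^n$ with lexicographic order, but your own construction (and Definition \ref{Einv}) assigns to a component $t_i$ the value $\Gamma(\xi)$ whenever $\theta_i=0$, and $\Gamma$ takes values in $\mathbb{Z}\times\mathbb{Q}\times\mathbb{Z}^{\mathbb{N}}$, not in $\mathbb{Q}\cup\{\infty\}$; so $t^{(r)}$ is not a map into your $\mathcal{I}_n$ and lexicographic comparison of its values is undefined. This is not a vacuous case: it occurs systematically (already in the base case $n=1$, after the first blow up one has $\Eord_{\xi^{(1)}}(I^{(1)})=0$ and the entire remaining resolution is driven by $\Gamma$), and your own termination argument leans on it (``once it reaches $0$ we are in the monomial case''). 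The repair is exactly the paper's choice: $\mathcal{I}_1=\mathbb{Q}\sqcup(\mathbb{Z}\times\mathbb{Q}\times\mathbb{Z}^{\mathbb{N}})\sqcup\{\infty\}$ and recursively $\mathcal{I}_n=\bigl(\mathbb{Q}\sqcup(\mathbb{Z}\times\mathbb{Q}\times\mathbb{Z}^{\mathbb{N}})\sqcup\{\infty\}\bigr)\times\mathcal{I}_{n-1}$, ordered lexicographically. A second, smaller point you gloss over: the inductive descent produces the lower components $(t_{n-1}^{(r)},\ldots,t_1^{(r)})$ only at points of maximal value of the higher ones, since hypersurfaces of $E$-maximal contact are available only where the $E$-order is maximal; to obtain a function on all of $\ESing(J^{(r)},c)$, so that $\EMaxB t^{(r)}$ makes sense, one needs the extension step the paper performs with the localizations $U_{\xi}=Spec(K[x,y]_{y,\{x_j/\ \xi_j\neq 0\}})$. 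Neither issue changes the architecture of the argument, but as written part (A) and the definition of $t^{(r)}$ in part (B) are mutually inconsistent.
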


\begin{remark}
The proof follows the same structure as the proof of the existence of an algorithm of resolution of basic objects (over fields of characteristic zero) given in \cite{course}, page $206$. 
\end{remark}

\begin{proof} By induction on $n$. 
\begin{itemize}
	\item  If $n=1$, fix $\mathcal{I}_1=\mathbb{Q}\sqcup(\mathbb{Z}\times\mathbb{Q}\times\mathbb{Z}^{\mathbb{N}})\sqcup\{\infty\}$. Given the BBOE $(W^{(0)},(J^{(0)},c),H^{(0)},E^{(0)})$ with $dim(W^{(0)})=1$ and $E^{(0)}=\{V(x_1)\}$ define $$\begin{array}{cccc} t^{(0)}: &  \ESing(J^{(0)},c) & \rightarrow & \mathcal{I}_1 \\ & \xi & \rightarrow & \frac{\Eord_{\xi}(I^{(0)})}{c} \end{array}$$ as in \ref{Einv}. The ideal $J^{(0)}$ is a binomial ideal in one variable, then it is generated by binomial equations of the type  $x_1^{\alpha}-bx_1^{\beta}=x_1^{\alpha}\cdot(1-bx_1^{\beta-\alpha})$ where $\alpha,\beta\in\mathbb{N}$ with $\alpha\leq\beta$, and eventually, by monomials $x_1^{\eta}$, $\eta\in\mathbb{N}$. 

By corollary \ref{esing}, $\ESing(J^{(0)},c)\subseteq \{x_1=0\}$, since $I^{(0)}$ does not contain hyperbolic equations. But $\{x_1=0\}$ is a hypersurface in a space of dimension $1$, so it is a closed point. Then 
	\begin{itemize}
	\item either $\ESing(J^{(0)},c)=\emptyset$, the $E$-resolution process stops, 
	\item or $\ESing(J^{(0)},c)=\{x_1=0\}=\EMaxB t^{(0)}$ is the next center to be blown up.
  \end{itemize}
In this case, construct the following transformation of BBOE 	$$(W^{(0)},(J^{(0)},c),H^{(0)},E^{(0)})\stackrel{\pi_1}{\longleftarrow}(W^{(1)},(J^{(1)},c),H^{(1)},E^{(1)})$$ where $J^{(1)}=x_1^{\theta-c}\cdot I^{(1)}$, $\theta=\Eord_{\xi}(I^{(0)})>0$, for $\xi\in \{x_1=0\}$. 

By lemma \ref{bajainv}, $t^{(0)}(\xi)>t^{(1)}(\xi^{(1)})$ for $\xi\in \EMaxB t^{(0)}$, $\pi_1(\xi^{(1)})=\xi$. Hence $max\ t^{(0)}\!>\!max\ t^{(1)}$. 

If fact, since $J^{(0)}$ is a binomial ideal in one variable, $\Eord_{\xi^{(1)}}(I^{(1)})=0$ and in a neighborhood of $\xi^{(1)}$, it holds $(I^{(1)})_{\xi^{(1)}}=1$. 

By induction, assume it has been defined a sequence of transformations at permissible centers, which is a sequence of blow ups along a point, 
\begin{equation} \label{resseq}
(W^{(0)},(J^{(0)},c),H^{(0)},E^{(0)})\stackrel{\pi_1}{\longleftarrow}\ldots \stackrel{\pi_r}{\longleftarrow}(W^{(r)},(J^{(r)},c),H^{(r)},E^{(r)}).
\end{equation} 
Each center is $\EMaxB(t^{(i)})$, $0\leq i\leq r-1$, and the $E$-resolution function $t^{(i)}$ defined after each blow up $\pi_i$ is
\begin{equation}
t^{(i)}(\xi^{(i)})=\left\{\begin{array}{lc} \frac{\Eord_{\xi^{(i)}}(I^{(i)})}{c} & \text{ if } \Eord_{\xi^{(i)}}(I^{(i)})>0 \vspace*{0.2cm}  \\ \Gamma^{(i)}(\xi^{(i)}) & \text{ if } \Eord_{\xi^{(i)}}(I^{(i)})=0 \end{array}\right. \label{defseq}
\end{equation} for $0\leq i\leq r-1$. If $\ESing(J^{(r)},c)\neq \emptyset$ define $t^{(r)}$ as above (\ref{defseq}). 

In this case $\Eord_{\xi^{(i)}}(I^{(i)})=0$ for all $\xi^{(i)}\in W^{(i)}$, $i\geq 1$. Hence apply $\Gamma$ function to the ideals $I^{(i)}$. Lemma \ref{bajainv} implies $$max\ t^{(0)}>max\ t^{(1)}>\ldots> max\ t^{(r)}.$$ Because of the upper semi-continuity of each $t^{(i)}$, there exists an index $N$ such that the sequence (\ref{resseq}) is a $E$-resolution of $(W^{(0)},(J^{(0)},c),H^{(0)},E^{(0)})$. 

Considering that the $E$-order is an equivariant function (\ref{equivdef}), each $t^{(i)}$ is an equivariant function. 

	\item Fix $n>1$. By induction hypothesis there exists a totally ordered set $(\mathcal{I}_{n-1},\leq)$. In dimension $n$, consider $$(\mathcal{I}_n,\leq)=((\mathbb{Q}\sqcup(\mathbb{Z}\times\mathbb{Q}\times\mathbb{Z}^{\mathbb{N}}) \sqcup\{\infty\})\times\mathcal{I}_{n-1},\leq)$$ with lexicographical order. Let  $B^{(0)}=(W^{(0)},(J^{(0)},c),H^{(0)},E^{(0)})$ be a BBOE of dimension $n$ where $J^{(0)}\!=\!J_n^{(0)}$ and $E^{(0)}\!=\!\{V(x_1),\ldots,V(x_n)\}$. Define the following function $$\begin{array}{cccc} g^{(0)}: &  \ESing(J_n^{(0)},c) & \rightarrow & (\mathbb{Q}\sqcup(\mathbb{Z}\times\mathbb{Q}\times\mathbb{Z}^{\mathbb{N}}) \sqcup\{\infty\}) \\ & \xi & \rightarrow & g^{(0)}(\xi)=t_n^{(0)}(\xi) \end{array}$$ where $t_n^{(0)}$ is defined as in \ref{Einv}, that is, 
$$t_n^{(0)}(\xi)=\left\{\begin{array}{cl} \infty & \text{ if } J_n^{(0)}=1 \\ \frac{\Eord_{\xi}(I_n^{(0)})}{c} & \text{ if } \Eord_{\xi}(I_n^{(0)})>0 \\ \Gamma(\xi)  & \text{ if } \Eord_{\xi}(I_n^{(0)})=0	\end{array}\right.$$

In the case $\Eord_{\xi}(I_n^{(0)})>0$, by induction on the dimension, from the BBOE $B^{(0)}$ we can construct a BBOE in dimension $n-1$, 
 $(W_{n-1}^{(0)},(J_{n-1}^{(0)},c_n^{(0)}),H_{n-1}^{(0)},E_{n-1}^{(0)})$, where  
  \begin{itemize}
	\item $W_{n-1}^{(0)}$ is a hypersurface of $E$-maximal contact for $P_n^{(0)}$. This hypersurface is given by lemma \ref{hiper}. The ideal $P_n^{(0)}$ is the companion ideal of $J^{(0)}=J_n^{(0)}$. 
	\item $J_{n-1}^{(0)}$ is the junior ideal of $P_n^{(0)}$ in $W_{n-1}^{(0)}$.
	\item $c_n^{(0)}=max\ \Eord(P_n^{(0)})$.
	\item $H_{n-1}^{(0)}=H^{(0)}\cap W_{n-1}^{(0)}$.
	\item $E_{n-1}^{(0)}=E^{(0)}\cap W_{n-1}^{(0)}$.
\end{itemize}
	
Define $$\begin{array}{cccl} h^{(0)}: & \EMaxB g^{(0)} & \rightarrow & \mathcal{I}_{n-1} \\ & \xi & \rightarrow & h^{(0)}(\xi)=(t_{n-1}^{(0)}(\xi),\ldots,t_1^{(0)}(\xi)) \end{array}$$ where each function $t_i^{(0)}$, for $i=n-1,\ldots,1$, is defined by induction hypothesis for the BBOE of dimension $i$, $(W_i^{(0)},(J_i^{(0)},c_{i+1}^{(0)}),H_i^{(0)},E_i^{(0)})$ according to \ref{Einv}. 

Extend $h^{(0)}$ to all the points of $\ESing(J^{(0)},c)$: 
\begin{itemize}
	\item[] For each point $\xi=(\xi_1,\ldots,\xi_n)\in W=Spec(K[x,y]_y)$ define the open subset  $U_{\xi}=Spec(K[x,y]_{y,\{x_j/\ \xi_j\neq 0\}})\subset W$. 

Let $a\in \ESing(J^{(0)},c)$ be a point such that $\Eord_{a}(I_n^{(0)})>0$ and $a\notin \EMaxB g^{(0)}$. Fix $a\in U_a$. Assume $x_j(a)=0$ for all $j$, then $a\in \EMaxB g^{(0)}\Big|_{U_a}$. 
\end{itemize}
	
Thus
$$\begin{array}{cccl} t^{(0)}: &  \ESing(J^{(0)},c) & \rightarrow & \mathcal{I}_n \\ & \xi & \rightarrow & t^{(0)}(\xi)=(g^{(0)}(\xi),h^{(0)}(\xi)) \end{array}$$
By construction, $$\EMaxB t^{(0)}=\{\xi\in \ESing(J^{(0)},c)|\ g^{(0)}(\xi)=max\ g^{(0)},\  h^{(0)}(\xi)=max\ h^{(0)}\}$$ $$\hspace*{0.6cm}=\{\xi\in \EMaxB g^{(0)}|\ h^{(0)}(\xi)=max\ h^{(0)}\}=\EMaxB h^{(0)}$$
and it is a permissible center, since by induction hypothesis the result holds for BBOE of dimension $n-1$.

Now assume that by induction on the number of transformations we have defined a sequence of transformations at permissible centers 
\begin{equation} \label{sucesion}
(W^{(0)},(J^{(0)},c),H^{(0)},E^{(0)})\stackrel{\pi_1}{\longleftarrow}\ldots \stackrel{\pi_r}{\longleftarrow}(W^{(r)},(J^{(r)},c),H^{(r)},E^{(r)})
\end{equation}
and a sequence of functions $t^{(1)}=(g^{(1)},h^{(1)}),\ldots,t^{(r-1)}=(g^{(r-1)},h^{(r-1)})$ satisfying the conditions of $B.2$.

To conclude, it is enough to show that if $\ESing(J^{(r)},c)\neq\emptyset$ then the sequence (\ref{sucesion}) extends to a $E$-resolution of $(W^{(0)},(J^{(0)},c),H^{(0)},E^{(0)})$.  

Define the function $$\begin{array}{cccl} g^{(r)}: &  \ESing(J^{(r)},c) & \rightarrow & (\mathbb{Q}\sqcup(\mathbb{Z}\times\mathbb{Q}\times\mathbb{Z}^{\mathbb{N}}) \sqcup\{\infty\}) \\ & \xi & \rightarrow & g^{(r)}(\xi)=\left\{\begin{array}{cl} \Gamma(\xi) & \text{ if } \Eord_{\xi}(I^{(r)})=0 \\ t_n^{(r)}(\xi) & \text{ if } \Eord_{\xi}(I^{(r)})>0  \end{array}\right.\end{array}$$
and the function $$\begin{array}{cccl} h^{(r)}: & \EMaxB g^{(r)} & \rightarrow & \mathcal{I}_{n-1} \\ & \xi & \rightarrow & h^{(r)}(\xi)=(t_{n-1}^{(r)}(\xi),\ldots,t_1^{(r)}(\xi)) \end{array}$$ where each $t_i^{(r)}$ is defined (according to \ref{Einv}) by induction hypothesis for the BBOE of dimension $i$, $(W_i^{(r)},(J_i^{(r)},c_{i+1}^{(r)}),H_i^{(r)},E_i^{(r)})$, with $i=n-1,\ldots,1$. 

As above, extend the function $h^{(r)}$ to all points of $\ESing(J^{(r)},c)$ and set $t^{(r)}=(g^{(r)},h^{(r)})$.

Analogously, $$\EMaxB t^{(r)}=\EMaxB g^{(r)}\cap \EMaxB h^{(r)}=\EMaxB h^{(r)}$$ and by induction on the dimension  $\EMaxB h^{(r)}$ is a permissible center.

Consider the sequence of transformations in dimension $n$, 
\begin{equation} \label{seqd1}
\ldots\stackrel{\pi_{r_0}}{\longleftarrow} (W_{n}^{(r_0)},(J_{n}^{(r_0)},c),H_{n}^{(r_0)},E_{n}^{(r_0)})\stackrel{\pi_{r_0+1}}{\longleftarrow}\ldots \stackrel{\pi_r}{\longleftarrow}(W_{n}^{(r)},(J_{n}^{(r)},c),H_{n}^{(r)},E_{n}^{(r)})
\end{equation}
satisfying $$max\ t_n^{(r_0-1)}>max\ t_n^{(r_0)}=max\ t_n^{(r_0+1)}=\ldots=max\ t_n^{(r)}.$$

And the sequence of transformations in dimension $n-1$ induced by (\ref{seqd1}), 
\begin{equation} \label{seqdim}
(W_{n-1}^{(r_0)},(J_{n-1}^{(r_0)},c_n^{(r_0)}),H_{n-1}^{(r_0)},E_{n-1}^{(r_0)})\!\stackrel{\pi_{r_0+1}}{\longleftarrow}\!\ldots\!\stackrel{\pi_r}{\longleftarrow}\! (W_{n-1}^{(r)},(J_{n-1}^{(r)},c_n^{(r)}),H_{n-1}^{(r)},E_{n-1}^{(r)}).
\end{equation} 
Hence $c_n^{(r_0)}=c_n^{(r_0+1)}=\ldots=c_n^{(r)}$ where $c_n^{(i)}=max\ \Eord(P_n^{(i)})=max\ \Eord(I_n^{(i)})$ with $i=r_0,\ldots,r$. Then the sequence (\ref{seqdim}) in dimension $n-1$ is of the form 
$$(W_{n-1}^{(r_0)},(J_{n-1}^{(r_0)},c_n^{(r_0)}),H_{n-1}^{(r_0)},E_{n-1}^{(r_0)})\!\stackrel{\pi_{r_0+1}}{\longleftarrow}\!\ldots\!\stackrel{\pi_r}{\longleftarrow}\! (W_{n-1}^{(r)},(J_{n-1}^{(r)},c_n^{(r_0)}),H_{n-1}^{(r)},E_{n-1}^{(r)}).$$ 

By induction hypothesis we can extend the sequence (\ref{seqdim}) in dimension $n-1$ to a $E$-resolution of the corresponding BBOE in dimension $n-1$. Suppose this happens after $r$ transformations, that is $\ESing(J_{n-1}^{(r)},c_n^{(r)})=\emptyset$. 

By proposition \ref{saltodim}, after the next transformation, the first component of the $E$-resolution function drops 
$max\ g^{(r)}>max\ g^{(r+1)}$. Note that $max\ g^{(i)}$ can not decrease indefinitely many times because it takes values in $\frac{1}{c}\mathbb{N}$. Then there exists an index $N$ such that the previous sequence (\ref{seqd1}) in dimension $n$ is a $E$-resolution, that is, $\ESing(J_{n}^{(N)},c)=\emptyset$.  
\medskip

Since the $E$-order is an equivariant function (\ref{equitoro}), each $t^{(i)}$ is an equivariant function. Hence the $E$-resolution achieved in this way is invariant by the torus action.
\end{itemize} \vspace*{-0.5cm}
\end{proof}

\begin{remark} Note that this algorithm \ref{resobb} of $E$-resolution of a BBOE $(W,(J,c),H,E)$ is independent of the choice of coordinates and of the choice of the generator system of the ideal $J$.   
\end{remark}

\begin{proposition} {\bf Properties of the algorithm given by Theorem \ref{resobb}.} \label{propis} 
\medskip

Fix a BBOE $(W,(J,c),H,E)$ and a $E$-resolution of this BBOE given by theorem \ref{resobb}. This means  $\ESing(J^{(r)},c)=\emptyset$ for some $r\in\mathbb{N}$, $r>0$.
\begin{enumerate}
	\item If $\xi\in \ESing(J^{(k)},c)$ for $0\leq k\leq r-1$, and $\xi\notin Z^{(k)}$ then  $t^{(k)}(\xi)=t^{(k+1)}(\xi')$ where $\pi_{k+1}(\xi')=\xi$. 
	
That is, it is possible to identify the points in the $E$-singular loci $$\ESing(J^{(0)},c),\ldots,\ESing(J^{(k)},c)$$ and outside the centers $Z^{(0)},\ldots,Z^{(k)},$ with their corresponding transforms in the $E$-singular locus $\ESing(J^{(k+1)},c)$. 
	\item The $E$-resolution is achieved by means of transformations along centers $\EMaxB t^{(k)}$ for $0\leq k\leq r-1$. The $E$-resolution function $t$ drops after each one of these transformations $$max\ t^{(0)}>max\ t^{(1)}>\ldots >max\ t^{(r-1)}.$$
	\item For all $0\leq k\leq r-1$, the closed set $\EMaxB t^{(k)}$ is equidimensional and regular and its dimension is determined by the value $max\ t^{(k)}$.
\end{enumerate}
\end{proposition}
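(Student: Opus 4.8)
The plan is to prove the three assertions in turn, in each case exploiting the local character of the $E$-resolution function recorded in Remark~\ref{localinv} together with the fact that each blow up $\pi_{k+1}$ is an isomorphism away from its center $Z^{(k)}$.

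For item (1), I would first observe that if $\xi\notin Z^{(k)}$ then the fibre $\pi_{k+1}^{-1}(\xi)$ is a single point $\xi'$ and $\pi_{k+1}$ identifies a neighbourhood of $\xi'$ with a neighbourhood of $\xi$. Over such a neighbourhood the exceptional divisor $Y'$ does not pass, so $I(Y')$ is locally the unit ideal and the controlled transform $J^{(k+1)}=I(Y')^{\theta-c}\cdot (J^{(k)})^{\curlyvee}$ coincides with $J^{(k)}$ regardless of the sign of the exponent. Consequently all the data entering the definition of $t$ — the factorization $J_i=M_i\cdot I_i$, the $E$-orders $\theta_i=\Eord(I_i)$, the companion ideals $P_i$, the critical values $c_i$ and the monomial contribution $\Gamma$ — are carried over unchanged by the isomorphism. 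Since by Remark~\ref{localinv} the value of $t$ at a point depends only on that point, this yields $t^{(k)}(\xi)=t^{(k+1)}(\xi')$.

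For item (2), the statement that the centers are the maximal loci $\EMaxB t^{(k)}$ is immediate from the construction in Theorem~\ref{resobb}, so the content is the strict drop of the maximum. I would split $\ESing(J^{(k+1)},c)$ into points lying over $Z^{(k)}$ and points disjoint from $Y'$. Over the center, Lemma~\ref{bajainv} gives $t^{(k+1)}(\xi')<t^{(k)}(\xi)=\max t^{(k)}$ for every $\xi'\in Y'$ with $\pi_{k+1}(\xi')=\xi\in Z^{(k)}$. For a point $\xi'$ disjoint from $Y'$, item (1) gives $t^{(k+1)}(\xi')=t^{(k)}(\xi)$ with $\xi\in\ESing(J^{(k)},c)\setminus Z^{(k)}$; since $Z^{(k)}=\EMaxB t^{(k)}$ is exactly the top locus, such a $\xi$ satisfies $t^{(k)}(\xi)<\max t^{(k)}$. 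Hence every point of $\ESing(J^{(k+1)},c)$ has value strictly below $\max t^{(k)}$, so $\max t^{(k+1)}<\max t^{(k)}$, and chaining these inequalities gives the displayed strictly decreasing sequence.

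For item (3), I would use that $\EMaxB t^{(k)}$ is a combinatorial center, i.e.\ of the form $\bigcap_{i\in\mathcal{I}}\{x_i=0\}$ (see the remark following the corollary on closedness of $\EMaxB(t)$). An intersection of coordinate hyperplanes is automatically regular and equidimensional of codimension $|\mathcal{I}|$, which settles regularity and equidimensionality. The remaining content is that $\max t^{(k)}$ determines $|\mathcal{I}|$, and this is the step I expect to be the main obstacle: one must unwind the inductive construction dimension by dimension and verify that the set $\mathcal{I}$ of variables entering the center is read off from the components of $t$. Concretely, at each level $i$ the hypersurface of $E$-maximal contact $\{x_{j_i}=0\}$ produced by Corollary~\ref{esing} and Lemma~\ref{hiper} contributes its variable precisely when the corresponding component $t_i=\theta_i/c_{i+1}$ is finite and nonzero, while the $\infty$ and $\Gamma$ components govern the remaining monomial variables; one then checks that two points with the same value of $t$ give centers of equal codimension. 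The delicate point is to handle the permissible hypersurfaces and the monomial case uniformly, so that the codimension depends only on $\max t^{(k)}$ and not on the particular point or chart.
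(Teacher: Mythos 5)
Your proposal is correct and follows essentially the same route as the paper: the paper's entire proof is the single sentence ``These properties come from the previous results,'' and the results you invoke (Remark~\ref{localinv} plus the isomorphism off the center for item (1), Lemma~\ref{bajainv} combined with item (1) for item (2), and the combinatorial form $Z=\cap_{i\in\mathcal{I}}\{x_i=0\}$ of the centers for item (3)) are exactly the previous results the paper is implicitly assembling. The only caveat is that the final claim of item (3) --- that $\max t^{(k)}$ determines the codimension --- is left as a sketch in your write-up, but the paper offers no argument for it either, so your treatment is in fact more detailed than the original.
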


\begin{proof} These properties come from the previous results.
\end{proof}

\begin{remark} Running this algorithm \ref{resobb} of $E$-resolution of a BBOE we only modify the singular points included in the $E$-singular locus. 
\end{remark}

\bibliographystyle{plain}

\providecommand{\bysame}{\leavevmode\hbox to3em{\hrulefill}\thinspace}

\ \\
Roc\'{\i}o Blanco \\
Universidad de Castilla-La Mancha. Departamento de Matem\'aticas. \\ E.U. de Magisterio.
  Edificio Fray Luis de Le\'on.\\ Avda. de los Alfares 42, 16071 Cuenca, Spain.\\
mariarocio.blanco@uclm.es
\noindent 

\end{document}